\documentclass[12pt]{article}

\usepackage{lmodern}
\usepackage[T1]{fontenc}
\usepackage{enumitem}
\usepackage{graphicx}
\usepackage{tikz}
\usepackage{amsthm, amssymb, amsmath, amsfonts}
\usepackage{bbm}
\usepackage{mathtools}
\usepackage{mathrsfs}
\usepackage{bm}
\usepackage{xcolor}
\usepackage{caption}
\usepackage{titling}
\numberwithin{equation}{section}
\numberwithin{figure}{section}

\theoremstyle{plain}
\newtheorem{theorem}{Theorem}[section] 
\newtheorem{corollary}[theorem]{Corollary} 
\newtheorem{lemma}[theorem]{Lemma} 

\theoremstyle{definition}
\newtheorem{definition}[theorem]{Definition} 
\newtheorem{notation}[theorem]{Notation}
\newtheorem*{acknowledgements*}{Acknowledgements}
\newtheorem{remark}[theorem]{Remark} 

\theoremstyle{remark}

\newcommand{\N}{\mathbb{N}}
\newcommand{\Z}{\mathbb{Z}}

\newcommand{\R}{\mathbb{R}}
\newcommand{\C}{\mathbb{C}}

\newcommand{\esp}{\mathbb{E}}
\newcommand{\sN}{{\scriptscriptstyle N}}

\DeclareMathOperator{\tr}{tr}
\newcommand{\1}{\mathbbm{1}}
\newcommand{\dd}{\mathop{}\!\mathrm{d}}

\newcommand{\PP}{\mathcal{P}}
\newcommand{\NC}{\mathcal{NC}}
\DeclareMathOperator{\Kr}{Kr}

\newcommand{\DT}{\mathcal{DT}}
\newcommand{\U}{\mathcal{U}}

\newcommand{\cA}{\mathcal{A}}
\let\phi=\varphi

\DeclarePairedDelimiter{\abs}{\lvert}{\rvert}

\DeclarePairedDelimiter{\set}{\{}{\}}

\DeclarePairedDelimiter{\paren}{(}{)}
\DeclarePairedDelimiter{\bracket}{[}{]}

\newcommand{\xx}[2]{\relax}
\newcommand{\thebottomline}{\renewcommand{\thefootnote}{}
  \renewcommand{\footnoterule}{}
  \phantom{M}\footnotetext{\tiny{}\textit{\jobname.tex}\hfill
    \textit{\noindent\romannumeral\day.%
\romannumeral\month.\expandafter\xx\romannumeral\year}}}

\makeatletter
\renewcommand\section{\@startsection {section}{1}{\z@}%
                                   {-3.5ex \@plus -1ex \@minus -.2ex}%
                                   {2.3ex \@plus.2ex}%
                                   {\normalsize\bf}}
\renewcommand\subsection{\@startsection {subsection}{1}{\z@}%
                                   {-3.5ex \@plus -1ex \@minus -.2ex}%
                                   {2.3ex \@plus.2ex}%
                                   {\normalsize\bf}}                                   
\long\def\@makefntext#1{\@setpar{\@@par\@tempdima
\hsize \advance\@tempdima-10pt\parshape \@ne
10pt\@tempdima}\par \parindent 1em\noindent \hbox to
\z@{\hss$\m@th^{\@thefnmark}$}#1} 
\makeatother

\renewcommand{\thefootnote}{(\arabic{footnote})}
\newcommand{\ab}{\allowbreak}

\newcommand{\re}{\mathrm{Re}}
\newcommand{\cP}{\mathcal{P}}
\allowdisplaybreaks

    \title{\scshape\fontsize{16}{19}\selectfont 
     Infinitesimal Freeness of Wigner Matrices}
    
    \author{\scshape\fontsize{14}{17}\selectfont
      Samuel Gurrola-Viramontes and James A. Mingo}

\date{}

\begin{document}

\maketitle

\begin{abstract}
    In this paper, within the framework of real infinitesimal free probability introduced by Cébron and
    the second author, we compute the real infinitesimal free cumulants of independent complex Wigner matrices. Our approach relies on establishing a combinatorial relation between annular non-crossing partitions and families of directed graphs. As a consequence, we demonstrate that independent complex Wigner matrices are asymptotically real infinitesimally free. In particular, we show (under mild conditions) that a complex Wigner matrix is asymptotically infinitesimally free from its transpose.
\end{abstract}


\section{Introduction}

Since Wigner's initial work \cite{w1, w2}, there have been many extensions and generalizations of his results on limit laws for Hermitian random matrix ensembles. Wigner used the method of moments to show that the semi-circle law was the leading term in the $1/N$ expansion of the eigenvalue distribution of what is now called a Wigner matrix. Later authors considered unitarily invariant matrices given by a polynomial potential. Assuming some technical conditions on the potential, Johansson \cite[Thm. 2.4]{j} found the $1/N$ correction to the expansion. Johansson's formula had two parts. We will show here that the first can easily be expressed as the interaction between the matrix and its transpose; the second term contained a potential which is absent in the Wigner case. Later, Bai and Yao considered the corresponding question for Wigner matrices (see \cite[Eq. (1.1)]{by}). They showed that the (random) empirical infinitesimal process converged to a Gaussian process and found a formula for covariances of linear spectral statistics. Subsequently in \cite{em}, Enriquez and Ménard gave the $1/N$ correction for a certain class of Wigner matrices. We shall show that their description of the limiting infinitesimal distribution can be easily expressed in terms of the infinitesimal free cumulants, which were introduced in the complex case in \cite{fn} and in the real case in \cite{cm}. More precisely, the infinitesimal law of Wigner matrices obtained by Enriquez and Ménard\footnote{We have to correct a small error in their paper and let $X^{(\sN)} = \frac{1}{\sigma \sqrt{N}.} (W_{ij})$} can be expressed as follows
\begin{gather*}
    \frac{\1_{\set{r^2 = \sigma^2}}}{2} D_{\sigma}\paren*{\nu_1 - \nu_2}\ +\
    \frac{1}{2}\paren*{\frac{\alpha}{\sigma^4} - (2+\1_{\set{r^2 = \sigma^2}})}\paren*{\frac{x^4}{\sigma^4} - 4\frac{x^2}{\sigma^2} + 2}D_{\sigma}\nu_2\\
    + \frac{1}{2}\paren*{\frac{s^2}{\sigma^2} - (1+\1_{\set{r^2 = \sigma^2}})}\paren*{\frac{x^2}{\sigma^2} - 2}D_\sigma\nu_2,
\end{gather*}
where $r^2, \sigma^2, s^2, \alpha$ are parameters associated to the family of Wigner matrices (see Definition~\ref{def:wigner matrix} and Remark~\ref{rmk:wigner matrix}) and
\[
    \nu_1 = \frac{1}{2}\paren*{\delta_{-2} + \delta_2}, \qquad \nu_2(\dd x) = \frac{1}{\pi \sqrt{4 - x^2}} \, \1_{[-2, 2]}(x) \dd x.
\]
If we let $G$ be the Cauchy transform of the semi-circle law with variance $1$, $F = 1/G$ and $R_i(z) = \kappa_2' z + \kappa_4' z^3$, then the Cauchy transform of the infinitesimal law of Enriquez-Ménard (above) is
\begin{equation}\label{eq:cauchy and r transform}
g(z) = R_i(G(z)) G'(z) - F''(z)/(2 F'(z))
\end{equation}
where $\kappa_2' = s^2/\sigma^2 - 1 - \1_{\set{r^2 = \sigma^2}}$ and $\kappa_4' = \alpha/\sigma^4 -2 - \1_{\set{r^2 = \sigma^2}}$.
We will show that $s^2 - (1 + \1_{\set{r^2 = \sigma^2}})\sigma^2$ and $\alpha - (2+\1_{\set{r^2 = \sigma^2}})\sigma^4$ determine the infinitesimal law of the Wigner matrices. In a subsequent paper, \cite{mvb-r}, it will be shown that Eq. (\ref{eq:cauchy and r transform}) is an universal law giving the relation between the infinitesimal moments and infinitesimal cumulants of an infinitesimal law. This simplification shows that the asymptotic behavior of the matrices is governed by three quantities $\kappa_2$, $\kappa_2'$ and $\kappa_4'$. Moreover we extend their results in three ways. First, we shall consider independent families of Wigner matrices; second, we consider the joint distribution of the matrices and their transpose; thirdly, we make the class of Wigner matrices more general.  In this paper we shall refer to the $1/N$ corrections as the infinitesimal law of the matrix.

The particular case of Gaussian ensembles has been studied by Dumitriu and Edelman \cite{de}, Ledoux \cite{l}, and the second author \cite{m}. Very recently, Muñoz George and Tseng \cite{mgt}, analyzed the infinitesimal law of Wishart matrices with non-Gaussian entries.

One of the main results of this paper is to demonstrate the asymptotic freeness of a family of $N \times N$ random matrices at the infinitesimal level, which is a significant strengthening of what was previously known. We do this by showing that the joint distribution of the family is (asymptotically) the joint distribution of a free family. We know that the mixed cumulants of a free family vanish. This means that if we can write each mixed moment as a sum over $\NC(n)$, then by Möbius inversion,  the terms must be the free cumulants. In addition if the terms can be factored such that each factor only involves one member of the family, then this shows that mixed cumulants do indeed vanish, and thus the joint distribution is the distribution of free random variables; this then shows that the family is free. 

In this paper we are concerned with the $1/N$ correction or the joint infinitesimal law of a family of random variables in a non-commutative probability space. In \cite{bs} Belinschi and Shlyakhtenko and in  \cite{fn} Février and Nica introduced a new kind of independence, which they called independence of type $B$, after the type $B$ cumulants of Biane, Goodman, and Nica \cite{bgn}. Later Shlyakhtenko \cite{s}, and Collins, Hasebe and Sakuma  \cite{chs} showed that the $1/N$ corrections of some ensembles followed the type $B$ independence rules, which we call here \textit{complex infinitesimal freeness}. The same equivalence between freeness and vanishing of mixed cumulants applies to complex  infinitesimal freeness because one can reduce complex infinitesimal freeness to operator valued freeness over a two dimensional commutative algebra (see Tseng \cite[Thm. 3.2]{t}). 

At this point, we need to introduce a weaker version of infinitesimal freeness presented by Février and Nica, which is called \textit{real infinitesimal freeness}. This new rule was necessary to accommodate the cases of random matrix ensembles which are not unitarily invariant, in particular the Wigner matrices discussed here. In \cite{m}, the second author showed that independent GOE random matrices are not asymptotically complex infinitesimal free; however in \cite{m} it was also shown that there is a universal rule for computing mixed moments. The combinatorial basis for this rule was elucidated using real Wishart matrices in \cite{mvb}. In \cite{cm}, Cébron and the second author found the cumulant form of this universal rule and called it  \textit{real infinitesimal freeness}. It was shown that independent and orthogonally invariant matrices are asymptotically real infinitesimally free. In this paper, we extend this asymptotic freeness property to Wigner matrices which only have invariance under the symmetric group; under an additional assumption about the covariance of a matrix entry and its complex conjugate we get the complex infinitesimal freeness of \cite{cm}. Asymptotic complex infinitesimal freeness was already demonstrated by Shlyakhtenko \cite{s}, and Collins, Hasebe and Sakuma \cite{chs} for unitarily invariant ensembles and fixed finite rank operators, under different terminology. 

A completely novel feature of real infinitesimal freeness is the appearance of the transpose of a matrix. What we see here is that the infinitesimal law of a single matrix will depend on the mixed moments of the matrix and its transpose. Fortunately for Wigner matrices, the dependence is only up to moments of order $4$, which we find convenient to express in terms of classical cumulants of the entries of the matrix. In \cite{mp}, Popa and the second author showed that a unitarily invariant matrix is asymptotically free from its transpose. We shall see the same here with a certain type of Wigner matrix. 

Going back to 't Hooft's famous discovery of planarity as the selection criterion  for highest order contributions, see \cite{g}, most authors have only considered the case of orientable surfaces. The appearance of the transpose has important topological consequences for random matrix theory as it brings non-orientable surfaces into the calculation.

In \cite{cm} it was shown that real infinitesimal freeness is equivalent to the vanishing of mixed real infinitesimal free cumulants. So if one can show that arbitrary mixed moments in a matrix and its transpose can be written as a sum of free cumulants summed over non-crossing partitions with no mixed cumulants, then one has shown that the joint distribution is that of freely independent random variables, and thus the variables are freely independent. This is exactly what we shall do.  

A simple fact, of which we shall make use, is that all the properties of infinitesimal freeness (real or complex) can be written in terms of upper triangular matrices, see \cite{t}.

So if we show that asymptotically arbitrary mixed moments can be written as a sum over $\NC(n)$ and each term factors into a product where each factor only contains one element of the family, then the factors are the free cumulants and we have demonstrated free independence.

In a  final section of the paper we give a bijection between walks on non-orientable graphs and non-crossing pairings on a non-orientable disc. While the bijection between non-crossing pairings and walks on trees goes back at least as far as Wigner's work, here we need to extend this to the non-orientable case, thus bringing the calculation into the realm of infinitesimal free probability and the infinitesimal $r$-transform of \cite{mvb-r}.

\subsection{Wigner matrices}

There is no standard definition of a Wigner matrix in the case of complex entries; the difference is mainly in what is assumed about the expectation of the square of an off-diagonal entry. Here are the assumptions we shall use in this paper. If one does not consider the $1/N$ correction or transposes, then the distinctions do not matter in the large $N$ limit.

\begin{definition}[Wigner Matrix]\label{def:wigner matrix}
    An $N \times N$ \emph{Wigner matrix} is a random matrix $W = \frac{1}{\sqrt{N}}(W_{ij})_{i,j}$, such that $(W_{ij})_{i \leq j}$ are complex independent random variables with diagonal entries $(W_{ii})_{i}$ being identically distributed, off-diagonal entries $(W_{ij})_{i < j}$ also being identically distributed and $W_{ij} = \overline{W_{ji}}$ for all $i, j$. We assume that diagonal and off-diagonal entries have moments of all orders; however only the first four are needed to determine our limiting distributions. Furthermore, we assume that $\esp\bracket*{W_{ij}} = 0$ for all $i, j$,
    \[
        \esp\bracket{W_{ij}^2} = r^2 \text{ if } i \neq j, \quad \esp\bracket{\abs{W_{ij}}^2} = \begin{cases}
          \sigma^2 & \text{if } i \neq j,\\
          s^2 & \text{if } i = j
        \end{cases}, \quad \esp\bracket*{\abs{W_{ij}}^4} = \alpha \text{ if } i \neq j,
    \]
    and
    \[
       \re\paren{\esp\bracket{W_{ij}^4}} = \beta \text{ if } i \neq j, \qquad \re\paren{\esp\bracket{W_{ij}^3\overline{W_{ij}}}} = \rho \text{ if } i \neq j,
    \]
    for some constants $r^2, \beta, \rho \in \R$ and $\sigma, s, \alpha > 0$. We assume that the joint distribution of the entries does not depend on $N$. 
\end{definition}

\begin{remark}\label{rmk:wigner matrix}
$(i)$\ Note that while $r^2$ can, in principle, take any complex value such that $\abs{r^2} \leq \sigma^2$, we restrict $r^2$ to the real numbers to simplify our calculations. Moreover, the entries of the ensemble $W$ are all real if and only if $r^2 = \sigma^2$. Dykema, \cite{kd}, showed that $r^2$ played no role in the limiting eigenvalue distribution of a family of independent Wigner matrices; Enriquez and Ménard \cite{em} showed that it does play a role in the infinitesimal law, although they only allowed $r^2 = 0$ or $r^2 = \sigma^2$ and considered Wigner matrices of the form $\frac{1}{\sigma}W$.

\smallskip\noindent$(ii)$\ For a GUE matrix $r^2 = 0$ and for a GOE matrix $r^2 = \sigma^2$. In \cite[\S 2.2]{agz}, it is assumed that $r^2 = 0$ and asymptotic freeness is proved in \cite[Theorem 5.4.2]{agz} under these assumptions. In \cite{kd}, asymptotic freeness of independent Wigner matrices is proved in \cite[Theorem 2.1]{kd} under assumptions weaker than both those in \cite{agz} and those made here, but transposes and infinitesimal laws are not considered.

\end{remark}

\subsection{Statement of results} 

\begin{notation}
Given a random $N \times N$ matrix $W_j$, we will denote the entries of $W_j$ by $W_{kl}^{(j)}$. If $\epsilon \in \{-1, 1\}$, we let $W_j^{(\epsilon)}$ be $W_j$ if $\epsilon = 1$ and the transpose $W_j^t$ if $\epsilon = -1$. The $(k, l)$-entry of $W_j^{(\epsilon)}$ is $W_{kl}^{(j, \epsilon)}$. Since we have assumed that the joint distribution of entries is independent of $N$, we will not put $N$ into the notation.

\end{notation}

\begin{theorem}\label{thm:main}
Suppose $\{W_j\}_{j \in J}$ is an independent family of $N \times N$ Wigner matrices (as in Def. \ref{def:wigner matrix}). Then, the set $\{ W_j\}_{j \in J}$ is asymptotically semi-circular and real infinitesimally free. When $r^2 = 0$, the set $\{ W_j, W_j^t \}_{j \in J}$ is asymptotically free. Moreover, the real infinitesimal free cumulants of the limiting real semi-circular family are given by

\begin{itemize}

\item
$\kappa_2( w_{j_u}^{(\epsilon_u)}, w_{j_v}^{(\epsilon_v)} ) = k_2(W_{12}^{(j_u, \epsilon_u)}, W_{12}^{(j_v, -\epsilon_v)})$,

\item
\begin{align*}\lefteqn{
\kappa_2'( w_{j_u}^{(\epsilon_u)}, w_{j_v}^{(\epsilon_v)} ) } \\ 
& =
\esp\Big(W_{11}^{(j_u)} W_{11}^{(j_v)}\,\Big)
- 
\esp\Big(W_{12}^{(j_u, \epsilon_u)} \overline{W_{12}^{(j_v, \epsilon_v)}}\,\Big)
- 
\esp\Big(W_{12}^{(j_u, \epsilon_u)} \overline{W_{12}^{(j_v, -\epsilon_v)}}\,\Big),
\end{align*}

\item
\[
\kappa_4'(w_{j_t}^{(\epsilon_t)},  w_{j_u}^{(\epsilon_u)}, w_{j_v}^{(\epsilon_v)}, w_{j_y}^{(\epsilon_y)}) 
= \kern -1pt
\re\Big[k_4(W^{(j_t, \epsilon_t)}_{12}, \overline{W^{(j_u, \epsilon_u)}_{12}}, W^{(j_v, \epsilon_v)}_{12}, \overline{W^{(j_y, \epsilon_y)}_{12}})\Big],
\]
and

\item
all other infinitesimal cumulants vanish.

\end{itemize}

\end{theorem}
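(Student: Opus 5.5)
We will compute, for an arbitrary word in the matrices and their transposes, the mixed moment
\[
\esp\tr\big(W_{j_1}^{(\epsilon_1)}\cdots W_{j_n}^{(\epsilon_n)}\big)
= N^{-1-n/2}\sum_{i_1,\dots,i_n}\esp\Big(W^{(j_1,\epsilon_1)}_{i_1i_2}\cdots W^{(j_n,\epsilon_n)}_{i_ni_1}\Big),
\]
and expand it to order $N^{-1}$, i.e.\ we find $\phi$ and $\phi'$ with $\esp\tr(\cdots)=\phi(\cdots)+N^{-1}\phi'(\cdots)+o(N^{-1})$. The strategy follows the introduction. By \cite{cm}, real infinitesimal freeness is equivalent to the vanishing of mixed real infinitesimal free cumulants. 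So it suffices to write $\phi$ as a sum over $\NC(n)$, and $\phi'$ as a sum over $\NC(n)$ together with the annular non-crossing partitions $\NC^{\pm}$ (the real infinitesimal moment--cumulant formula of \cite{cm}). In both sums each term must factor into blocks, each block involving only one index $j$, and each block must be one of the quantities $\kappa_2,\kappa_2',\kappa_4'$ listed in the statement. Möbius inversion then identifies these block weights as the cumulants, and all other cumulants vanish.

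\textbf{Step 1: graphs.} Each index tuple $(i_1,\dots,i_n)$ determines a kernel partition and a closed walk on a graph $G$, with vertices the distinct indices and one edge per factor. Independence and centring force every undirected edge to be traversed at least twice. Traversals by different $j$ are independent, so only edges walked by a single $j$ contribute (for independent $j$ the expectation factorises). The number of index tuples for $G$ is $\sim N^{|V(G)|}$, so the order of a term is $N^{|V|-1-n/2}$.

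At leading order $G$ is a tree with each edge traversed exactly twice, in opposite directions. Such an edge contributes $\esp(W^{(j_u,\epsilon_u)}_{ab}W^{(j_v,\epsilon_v)}_{ba})$. Since the transpose reverses indices, this equals $\esp(W^{(j_u,\epsilon_u)}_{12}W^{(j_v,-\epsilon_v)}_{21}\cdots)$, which gives $k_2(W^{(j_u,\epsilon_u)}_{12},W^{(j_v,-\epsilon_v)}_{12})$ after the transpose bookkeeping. The classical tree/non-crossing-pairing bijection then gives semicircularity, $\kappa_2$, and freeness at order one.

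\textbf{Step 2: the $N^{-1}$ terms.} Here $|V|=n/2$, and the graphs split into four types:
\begin{itemize}
\item[(a)] a tree with one loop (a diagonal entry) traversed twice, giving $\esp(W_{11}^{(j_u)}W_{11}^{(j_v)})$;
\item[(b)] a tree with one edge traversed four times, giving fourth moments of off-diagonal entries;
\item[(c)] a graph with exactly one cycle, each edge traversed twice;
\item[(d)] corrections from the counting $N(N-1)\cdots$ of distinct indices in the leading tree terms.
\end{itemize}
For type (c), the traversals of each cycle edge are either in opposite directions (orientable) or in the same direction (non-orientable, producing $\esp(W_{ab}W_{ab})$-type factors). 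By the transpose rule these become $k_2$ with $\epsilon_v$ or $-\epsilon_v$.

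The plan is to reorganise the terms so that:
\begin{itemize}
\item the unicyclic contributions are exactly the $\kappa_2$-weighted annular (orientable and Möbius) non-crossing pairings of \cite{cm}, which accounts for the ``universal'' part of $\phi'$;
\item the remaining deficits give the leftover weights. Type (d), combined with (a), produces $\kappa_2'$, whose two subtracted terms match the two orientations. Type (b) minus the double-counted pairings produces the fourth cumulant. Symmetrising over the two ways a four-fold edge can be read gives the real part $\re k_4(\cdot,\bar\cdot,\cdot,\bar\cdot)$. The $\rho$ and $\beta$ terms arise from four traversals in the same or unbalanced directions, which cannot occur on a tree walk, so they disappear.
\end{itemize}

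\textbf{Step 3: combinatorics.} The combinatorial heart, and the main obstacle, is the bijection between closed walks on unicyclic graphs (with orientation data on the cycle) and annular non-crossing pairings $\NC_2^{\pm}$ with a marked orientation. Without it the unicyclic terms cannot be recognised as the $\kappa_2\kappa_2$ products in the real infinitesimal moment--cumulant formula. I would first establish it for the orientable case by cutting the cycle to obtain an annulus, following the bijection to a spanning tree. I would then treat the non-orientable twist as a reflection of one side of the annulus, checking the count with weights.

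Once the moments are written as $\sum_{\pi}\prod_{V\in\pi}\kappa_{|V|}$ plus infinitesimal corrections, with block weights depending on a single $j$, the vanishing of mixed cumulants gives real infinitesimal freeness. When $r^2=0$, the cross terms between $\epsilon$ and $-\epsilon$ vanish, since $k_2(W_{12},W_{12})=r^2=0$ (and similarly at infinitesimal order). Hence $W_j$ and $W_j^t$ have vanishing mixed cumulants, which gives the asymptotic freeness of $\{W_j,W_j^t\}$.
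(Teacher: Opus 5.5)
You have the paper's framework: index graphs, double trees at leading order, the four families at order $N^{-1}$ including the falling-factorial correction, and a walk/annular-pairing bijection. But the accounting in Step~2 does not close, and the missing idea is a \emph{cancellation}, not a bijection.

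The real infinitesimal moment--cumulant formula \eqref{eq:real moment cumulant} has no orientable annular part. Its only annular sum is over $S_{\NC}^{\delta}(n,-n)$. By Lemma~\ref{lemma:symmetric annular case}, those pairings correspond to unicyclic graphs whose cycle edges are each traversed twice in the \emph{same} direction. The orientable unicyclic graphs (cycle of length $k\ge 3$, cycle edges traversed in opposite directions) contribute nothing to $\phi'$. Their terms in $B_n$ must cancel exactly against part of your type (d), namely $-\binom{n/2+1}{2}A_n$.

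The paper proves this by writing $\binom{n/2+1}{2}A_n$ as a sum over a double tree together with an unordered pair of distinct vertices $U\neq V$, and then merging $U$ and $V$:
\begin{itemize}
\item distance $1$ gives the tree-plus-loop graphs;
\item distance $2$ gives the graphs with one four-fold edge, each obtained exactly twice (once per non-crossing splitting of the four traversals);
\item distance $k\ge 3$ gives each orientable $k$-cycle graph exactly once, with the same weight $f$.
\end{itemize}
The fibre count is Lemmas~\ref{lemma:smallest_partition} and~\ref{lemma: merge vertices}, via the Kreweras complement. So (d) cannot be ``combined with (a)'' alone: the adjacent pairs account for only $\tfrac n2 A_n$ of it. The rest is consumed by (b) and by the orientable part of (c). There is also no orientable annular bijection to build in Step~3.

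A sanity check shows the problem. For the GUE, $\kappa_2'=\kappa_4'=0$ and $\kappa_2(w,w^t)=r^2=0$, so the formula gives $\phi'(w^n)=0$. Your $\kappa_2$-weighted orientable annular sum, however, is nonzero for $n\ge 6$.

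Three further points in Step~2 are wrong.

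First, $\NC_2^\delta(n,-n)$ contains pairings with two or four through strings. These correspond to the degenerate cases $k=1,2$ (a loop, resp.\ a tree edge traversed four times), not to genuine same-orientation cycles. The paper completes the Möbius sum by adding and subtracting a term $g$ on the loop graphs and on the four-fold-edge graphs. There it reads the loop, resp.\ the crossing splitting $\{l_1,l_3\},\{l_2,l_4\}$, as same-orientation. This subtraction, not (d), produces the third term $-\esp\big(W_{12}^{(j_u,\epsilon_u)}\overline{W_{12}^{(j_v,-\epsilon_v)}}\big)$ of $\kappa_2'$. It also produces the crossing-pair term needed to turn the fourth moment into $k_4$. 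Type (d) supplies only $-\esp\big(W_{12}^{(j_u,\epsilon_u)}\overline{W_{12}^{(j_v,\epsilon_v)}}\big)$ and the two non-crossing pair products.

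Second, $\beta$ and $\rho$ do not disappear. A tree walk does traverse the four-fold edge twice in each direction. But transposed factors read $W_{ba}$ in place of $W_{ab}$, so $\esp(W_{12}^4)$ and $\esp(W_{12}^3\overline{W_{12}})$ genuinely occur (the sets $B$ and $C$ in Lemma~\ref{lemma:2-4 trees}). They are exactly what $\re k_4\big(W^{(j_t,\epsilon_t)}_{12},\overline{W^{(j_u,\epsilon_u)}_{12}},\dots\big)$ contains for mixed $\bm\epsilon$.

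Third, your parenthetical ``(and similarly at infinitesimal order)'' for $r^2=0$ is false in general, since $\kappa_2'(w,w^t)=s^2-\sigma^2-r^2$ need not vanish. The theorem only asserts first-order freeness of $\{W_j,W_j^t\}$, which needs only $\kappa_2(w,w^t)=r^2=0$.
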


\begin{remark}
For an individual operator $w_j$, we have that the real free infinitesimal cumulants of $w_j$ are given by
\begin{itemize}

\item
$\kappa_2(w, w) = \esp(|W_{12}|^2)$,

\item
$\kappa_2(w, w^t) = \esp(W_{12}^2)$,

\item
$\kappa_2'(w, w) = \kappa_2'(w, w^t) = \esp(W_{11}^2 - |W_{12}|^2 - W_{12}^2)$,

\item
$\kappa_4'(w, w, w, w) = \esp(|W_{12}|^4) - 2 \esp(|W_{12}|^2)^2 -  \esp(W_{12}^2)^2 $.

\end{itemize}

\end{remark}

\subsection{Outline of the paper}
In \textbf{\S2} we present a review of infinitesimal probability spaces and the technique of counting walks on graphs that we shall use. In \textbf{\S3} we prove the main technical lemmas that will be used in the paper. In \textbf{\S4} we prove Theorem \ref{thm:first order freeness} which gives the asymptotic freeness of independent Wigner matrices and their transposes when $r^2 = 0$; this is the first part of the claim made in Theorem \ref{thm:main}. Note that the asymptotic freeness of independent Wigner matrices was proved by Dykema \cite{kd}; what is new here is the free independence of transposes. In addition we prove Lemma \ref{lemma:non-crossing part}, which shows that the contribution of the orientable graphs is described by the infinitesimal free cumulants $\kappa_2'$ and $\kappa_4'$. In \textbf{\S5} we prove Lemma \ref{lemma:symmetric annular case} which gives our bijection between the non-crossing annular pairings and the non-orientable graphs $\U_{i,k}(n)$. With these two Lemmas we conclude the proof of Theorem \ref{thm:main} and in \textbf{\S6} make some concluding remarks.

\section{Preliminaries}

This section presents the basic notions of non-commutative probability and real infinitesimal freeness (introduced by Cébron and the second author \cite{cm}) that will be used throughout the paper.

\subsection{Non-commutative probability spaces}

Let us recall from \cite{vdn} the notion of a non-commutative probability space.
\begin{definition}
We call the pair $(\cA, \phi)$ a \textit{non-commutative probability space} if
\begin{itemize}
\item
 $\cA$ is a unital algebra over $\C$; and
 
\item $\phi: \cA \to \C$ is linear with $\phi(1) = 1$;
\end{itemize}

We call the triple $(\cA, \phi, t)$ a \textit{real non-commutative probability space} if the following conditions are satisfied:
\begin{itemize}
\item
 $(\cA, \phi)$ is a  non-commutative probability space

\item
$\phi$ is \textit{tracial}: $\phi(ab) = \phi(ba)$;

\item
   $a \to a^t$ is an  anti-isomorphism of $\cA$: for $a, b \in \cA$
   
   \begin{itemize}

   \item
   $(a + b)^t = a^t + b^t$,

   \item
   $(\beta a)^t = \beta a^t$ (for $\beta \in \C$),

   \item
   $(a b)^t = b^t a^t$,
   
   \item
   $\phi(a^t) = \phi(a)$.
   
   \end{itemize}
\end{itemize}
\end{definition}
Recall that subalgebras $\cA_1, \dots, \cA_s \subseteq \cA$ in a non-commutative probability space $(\cA, \phi)$ are free if whenever we have  elements $a_1, \dots, a_n \in \cA$ with $\phi(a_i) = 0$ for $1 \leq i \leq n$ (i.e. the elements are \textit{centred}) and the tuple is \textit{alternating}: $a_i \in \cA_{j_i}$ with $j_1 \not = j_2 \not = \cdots \not = j_n$, we then have $\phi(a_1 \cdots a_n) = 0$. In this paper we shall make heavy use of the free cumulants of Speicher (see \cite{ns}). 

Let $\pi \in \NC(n)$ be a non-crossing partition. We recall that for a block $V = (i_1, \dots, i_l)$ we define  $\kappa_{|V|}(a_1, \dots, a_n | V) = \kappa_l(a_{i_1}, \dots, a_{i_l})$ and if the blocks of $\pi$ are $\{V_1, \dots, V_k\}$ and $a_1, \dots, a_n \in \cA$, we define
\[
\kappa_\pi(a_1, \dots, a_n) =
\prod_{i=1}^k \kappa_{|V_i|}(a_1, \dots, a_n | V_i).
\]
This produces the moment-cumulant formula for any $a_1, \dots, a_n \in \cA$
\[
\phi(a_1 \cdots a_n) =
\sum_{\pi \in \NC(n)} \kappa_\pi(a_1, \dots, a_n).
\]
By induction on $n$ this recursively defines the free cumulants $\{ \kappa_n \}_n$. Freeness is equivalent to the vanishing of mixed cumulants (see \cite[Thm.~11.16]{ns}).

\begin{remark}
Let $(\cA, \phi)$ be a non-commutative probability space. Recall that if  $w_1, \dots, w_s \in \cA$ are such that for $n \geq 3$, we have $\kappa_n(w_{i_1}, \dots, w_{i_n}) = 0$ for all $i_1, \dots, i_n \in [s]$, then we say that $\{w_1, \dots, w_s\}$ is a semi-circular family with covariance matrix $( \kappa_2(w_i, w_j) )_{i,j= 1}^s$. 
\end{remark}

\begin{definition}
Let  $w_1, \dots, w_s$ be elements of a real non-commutative probability space such that $\{w_1, w_1^t, \dots, w_s, w_s^t\}$ is a semi-circular family. Then we say that $\{w_1, \dots, w_s\}$ is a \textit{real semi-circular family}.
\end{definition}

\subsection{Infinitesimal probability space}

\begin{definition}
We call the tuple $(\cA, \varphi, \varphi', t)$ a \textit{real infinitesimal probability space} (a \textit{real infinitesimal non-commutative probability space}, to be precise), if the following conditions are satisfied:
\begin{itemize}
    \item $(\cA, \varphi, t)$ is a real probability space, and
    \item $\varphi' : \cA \to \C$ is linear with $\varphi'(1_\cA) = 0$.
\end{itemize}
\end{definition}

\subsection{Infinitesimal freeness}
Let us recall the basic definitions of infinitesimal freeness from \cite{fn}, here referred to as complex infinitesimal freeness. 

Let ($\cA, \phi, \phi')$ be an infinitesimal probability space and $\cA_1, \dots, \cA_s \subseteq \cA$ be unital subalgebras. We say that the subalgebras $\cA_1, \dots, \cA_s$ are \textit{infinitesimally free} if whenever we have $a_1, \dots, a_n \in \cA$ with
\begin{enumerate}

\item
$a_i \in \cA_{j_i}$ with $j_1 \not = j_2 \not = \cdots \not = j_n$, and

\item
$\phi(a_i) = 0$ for $1 \leq i \leq n$;

\end{enumerate}
we have $\phi(a_1 \cdots a_n) = 0$ and for $n = 2k - 1$ odd
\[
\phi'(a_1 \cdots a_n) = \phi(a_1 a_n) \phi(a_2 a_{n-1})
\cdots \phi(a_{k-1}a_{k+1}) \phi'(a_{k})
\]
and for $n$ even $\phi'(a_1 \cdots a_n) = 0$.

From the moment-cumulant formula we get the infinitesimal moment-cumulant formula by the Leibniz rule. First we set
\[
\partial\kappa_\pi(a_1, \dots, a_n) =
\sum_{i=1}^k \kappa'_{|V_i|}(a_1, \dots, a_n | V_i)
\prod_{j \not = i} \kappa_{|V_j|}(a_1, \dots, a_n | V_j).
\]
Then
\[
\phi'(a_1 \cdots a_n) = 
\sum_{\pi \in \NC(n)} \partial\kappa_\pi(a_1, \dots, a_n).
\]
Then infinitesimal freeness is equivalent to the vanishing of mixed cumulants (see \cite[Prop.~4.7]{fn}). By mixed cumulants we mean both Speicher's free cumulants and the infinitesimal free cumulants of Février and Nica.

\subsection{Real infinitesimal freeness}

As mentioned in the introduction, working over the orthogonal group, or more generally with real Wigner matrices, requires a new kind of freeness, where the transpose map comes into play.
\begin{definition}
Let $(\cA, \varphi, \varphi', t)$ be a real infinitesimal probability space. Let, for each $i \in I$, $\cA_i \subset \cA$ be a symmetric unital sub-algebra. The sub-algebras $(\cA_i)_{i \in I}$ are called \emph{real infinitesimally free} if for each $n \in \N$, whenever $a_j \in \cA_{i_j}$ with $\varphi(a_j) = 0$ for all $1 \leq j \leq n$, and $i_1 \neq i_2, \ldots, i_{n-1} \neq i_n$, we have
\begin{itemize}
    \item $\varphi(a_1 \ldots a_n) = 0$; and
    \item when $n = 2$, $\varphi'(a_1a_2) = 0$,
    \item when $n = 2k - 1 \geq 3$, we have
    \begin{align*}
        \varphi'(a_1\cdots a_n) &= \varphi(a_1\varphi'(a_2 \ldots a_{n-1})a_n) +\\
        & \quad \varphi(a_1 a_k^t a_n)\varphi(a_2 a_{k+1}^t) \cdots \varphi(a_{k-1}a_{n-1}^t);
    \end{align*}
    \item when $n = 2k \geq 4$, we have
    \begin{align*}
        \varphi'(a_1\cdots a_n) &= \varphi(a_1\varphi'(a_2\cdots a_{n-1})a_n) +\\
        & \quad \varphi(a_1a_{k+1}^t)\varphi(a_2a_{k+1}^t)\cdots \varphi(a_k a_n^t).
    \end{align*}
\end{itemize}
\end{definition}

In a study of fluctuation moments of random matrices, \cite{mn}, Nica and the second author introduced \textit{non-crossing annular permutations}. The idea was to produce a combinatorial object that would play the role of non-crossing partitions in the analysis of fluctuation moments (see \cite[Ch.~5]{ms}). The properties needed for this paper are exposed in \cite{m, mvb,cm}, but we shall for convenience review them here. The definition of non-crossing here now requires us to work in  $S_n$, the symmetric group of permutations of $[n]$.

For $\pi \in S_n$ we let $\#(\pi)$ be the number of cycles in the cycle decomposition of $\pi$ (including cycles of length $1$). Let $\gamma_n \in S_n$ be the permutation with the single cycle $(1, 2, \dots, n)$. For a permutation $\pi$, its cycle decomposition produces a partition, which we also denote by $\pi$. The Euler formula for genus tells us that the partition $\pi$ is non-crossing if and only if the permutation $\pi$ satisfies $\#(\pi) + \#(\pi^{-1}\gamma_n) = n + 1$.

\setbox1=\hbox{\includegraphics[width=15em]{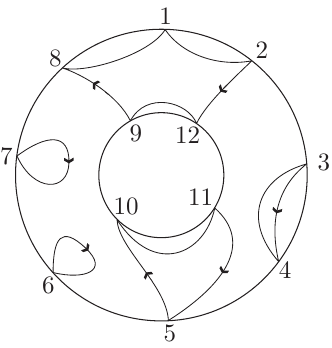}}
\setbox2=\hbox{\includegraphics[width=13.9em]{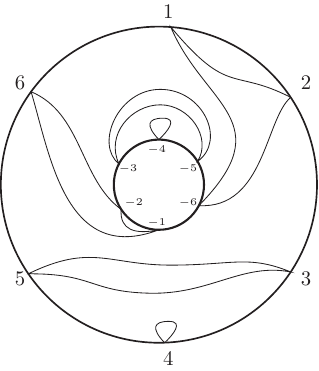}}

\begin{figure}
$\vcenter{\hsize=\wd1\box1}$ \hfill
$\vcenter{\hsize=\wd2\box2}$
\caption{\label{fig:annular-noncrossing-examples}\small On the left an element of $S_\NC(8,4)$ and on the right an element of $S_\NC^\delta(6, -6)$.}
\end{figure}

We now wish to extend this idea to the case where $m, n$ are positive integers and $\gamma_{m,n} = (1, 2, \dots, m)(m+1, \dots, m+n)$. We say that $\pi \in S_{m+n}$ is non-crossing annular if
\begin{itemize}

\item
$\#(\pi) + \#(\pi^{-1} \gamma_{m,n}) = m + n$, and

\item
$\pi \vee \gamma_{m, n} = 1_{m+n}$.

\end{itemize}
The second property states that the $\sup$ of the partition $\pi$ and the partition $\gamma_{m,n}$ has only one block; since $\gamma_{m,n}$ has two blocks, this property means that at least one cycle of $\pi$ meets both cycles of $\gamma_{m,n}$. Diagrammatically, this means that we can draw the cycles of $\pi$ in an $(m, n)$-annulus in such a way that the cycles don't cross and at least one cycle connects the two boundary circles. See Fig.~\ref{fig:annular-noncrossing-examples}. The set of non-crossing permutations of a $(m, n)$-annulus is denoted $S_\NC(m,n)$. In \cite{m,mvb} it was shown that a subset of $S_\NC(m,n)$, called $S_\NC^\delta(n, -n)$ was needed to handle the case of transposes. 

We let $[-n] = \{-1, -2, \dots, -n\}$ and $[\pm n]= [n] \cup [- n]$. Given a permutation $\pi \in S_n$ we extend $\pi$ to a permutation of $[\pm n]$, i.e. an element of $S_{\pm n}$, by setting $\pi(-k) = - k$ for $k \in [n]$. We then let $\delta \in S_{\pm n}$ be the permutation: $\delta(k) = -k$ for $k \in [ \pm n]$. Now we consider the permutations $\sigma \in S_{\pm n}$ such that
\begin{itemize}

\item 
$\sigma\delta$ has all cycles of length $2$,

\item
$\#(\sigma) + \#(\sigma^{-1}\gamma_n\delta\gamma_n^{-1}\delta) = 2n$; and

\item
$\sigma \vee \gamma_n\delta\gamma_n^{-1}\delta = 1_{[\pm n]}$.

\end{itemize}
In the third property above we mean the $\sup$ of the partition $\sigma$ and the partition $\gamma_n\delta\gamma_n^{-1}\delta$ is the partition of $[\pm n]$ with only one block. The first condition means that no cycle of $\sigma$ can contain a $k$ and $-k$, and in addition for every cycle $(i_1, \dots, i_k)$ of $\sigma$, $(-i_k, -i_{k-1}, \dots, -i_1)$ is also a cycle of $\sigma$. The second condition says that $\sigma$ is annular non-crossing. 

For $\sigma \in S_\NC^\delta(n, -n)$ and $a_1, \dots, a_n \in \cA$, we define $\kappa_{\sigma/2}(a_1, \dots, a_n)$ as follows. As mentioned above, every cycle of $\sigma$ occurs in a \textit{conjugate pair}: $c c'$, with $c' = \delta c^{-1} \delta$. By our assumptions about a real infinitesimal probability space and a cycle $c = (i_1, \dots, i_k)$, we have that $\phi(a_{i_1}^{(\epsilon_1)} \cdots a_{i_k}^{(\epsilon_k)}) = 
\phi(a_{i_k}^{(-\epsilon_k)} \cdots a_{i_1}^{(-\epsilon_1)})$
where $a^{(1)} = a$ and $a^{(-1)} = a^t$, as usual. This equality of moments then implies equality of cumulants so $\kappa_k(a_{i_1}^{(\epsilon_1)}, \dots, a_{i_k}^{(\epsilon_k)}) = 
\kappa_k(a_{i_k}^{(-\epsilon_k)}, \dots, a_{i_1}^{(-\epsilon_1)})$. This means that when we choose one representative from each conjugate pair, the result $\kappa_{\sigma/2}(a_1, \dots, a_n)$ is independent of the choice.

\begin{definition}
Let $(\cA, \varphi, \varphi', t)$ be a tracial real infinitesimal probability space. For $a_1, \ldots, a_n \in \cA$, we set $\kappa_1'(a_1) = \varphi'(a_1)$ for $n = 1$, and for $n \geq 2$
\begin{equation}\label{eq:real moment cumulant}
    \varphi'(a_1\cdots a_n) = \sum_{\pi \in \NC(n)} \partial\kappa_{\pi}(a_1, \ldots, a_n) + \sum_{\sigma \in S^{\delta}_{\NC}(n, -n)} \kappa_{\sigma/2}(a_1, \cdots, a_n).
\end{equation}
\end{definition}

In \cite[Thm.~7.1]{cm} it was shown that real infinitesimal freeness was equivalent to the vanishing of mixed real free infinitesimal cumulants. 

\section{Basic calculations: reduction to graphs }

Many of the techniques employed here were also used by Muñoz George and the second author in \cite{mm1, mm2}, in particular \S\ref{subsec:graph associated to a partition}. However, we won't assume anything from these papers. Let $\set{W_{j}}_{j \in J}$ be a family of independent identically distributed $N \times N$ Wigner random matrices. For $\epsilon \in\Z_2 = \{ -1, 1\}$, we set $W_j^{(\epsilon)} = W_j$ for $\epsilon = 1$, and $W_j^{(\epsilon)} = W_j^t$ for $\epsilon = - 1$. For $j \in J$ and $\epsilon \in\Z_2$, we denote by $W_{kl}^{(j, \epsilon)}$ the $(k,l)$-entry of 
 $W_k^{(\epsilon)}$. 

For $\bm \epsilon = (\epsilon_1, \dots, \epsilon_n) \in \Z_2^n$ and  $\bm j = (j_1, \ldots, j_n) \in J^n$, we compute the coefficients of order $N^0$ and $N^{-1}$ of the mixed moments:
\[
    \esp\bracket*{\tr\paren*{W_{j_1}^{(\epsilon_1)}  \cdots W_{j_n}^{(\epsilon_n)} }} = N^{-(n/2 + 1)} \kern-0.5em \sum_{\bm i : [n] \to [N]} \esp\bracket*{W_{i_1 i_2}^{(j_1, \epsilon_1)} W_{i_2 i_3}^{(j_2, \epsilon_2)} \cdots  W_{i_n i_1}^{(j_n, \epsilon_n)}},
\]
where $\tr$ is the normalized trace. We shall extend to $(\cA, \phi, \phi', t)$ the $\epsilon$-notation as follows. If $w \in \cA$ and $\epsilon \in \Z_2$, we set
\[
w^{(\epsilon)} =
\begin{cases} w & \epsilon = 1,\\ w^t & \epsilon = -1.\end{cases}
\]
Our first goal will be to show that $\esp\bracket*{\tr\paren*{W_{j_1}^{(\epsilon_1)}  \cdots W_{j_n}^{(\epsilon_n)} }} $ converges to $\phi(w_{j_1}^{(\epsilon_1)}\ab \cdots w_{j_n}^{(\epsilon_n)})$, where $\{w_1, \dots, w_s\}$ is a real semi-circular family with covariance matrix given by
\[
\kappa_2(w_i, w_i) = \esp [| W_{12}|^2] = \sigma^2 \mbox{\ and\ }
\kappa_2(w_i, w_i^t) = \esp [W_{12}^2] = r^2,
\]
and for $\epsilon \in \Z_2$, $\kappa_2(w_i, w_j^{(\epsilon)}) = 0$ whenever $i \not = j$. 

Our second goal is to show that
\[
N\paren*{\esp\bracket*{\tr\paren*{W_{j_1}^{(\epsilon_1)}  \cdots W_{j_n}^{(\epsilon_n)} }} - \phi\paren*{w_{j_1}^{(\epsilon_1)}\ab \cdots w_{j_n}^{(\epsilon_n)}}}
\]
converges to $\phi'(w_{j_1}^{(\epsilon_1)}\ab \cdots w_{j_n}^{(\epsilon_n)})$, with $\{w_1, \dots, w_s\}$ real infinitesimally free with infinitesimal law given by: for $\epsilon_1, \ldots, \epsilon_4 \in \Z_2$,
\[
\kappa_2'(w_i^{(\epsilon_1)}, w_i^{(\epsilon_2)}) =
\esp\Big(W_{11}^2
- W_{12}^{(\epsilon_1)} \overline{W_{12}^{(\epsilon_2)}}
- W_{12}^{(\epsilon_1)} \overline{W_{12}^{(-\epsilon_2)}}\,\Big),
\] 
\[
\kappa_4'(w_{i}^{(\epsilon_1)},  w_{i}^{(\epsilon_2)}, w_{i}^{(\epsilon_3)}, w_{i}^{(\epsilon_4)}) 
=
\re\Big(k_4\big(W^{(\epsilon_1)}_{12}, \overline{W^{(\epsilon_2)}_{12}}, W^{(\epsilon_3)}_{12}, \overline{W^{(\epsilon_4)}_{12}}\big)\Big),
\]
the mixed cumulants are $0$, and all other $\kappa_n'$'s are $0$. 

\subsection{The graph associated to a partition}\label{subsec:graph associated to a partition}

Consider the permutation $\gamma_n = (1, 2,\ldots, n)$. When there is no risk of confusion, we will write $\gamma$ instead of $\gamma_n$. Let $G_n$ be the oriented labelled graph with set of vertices $[n]$ and set of edges
\[
    \set{(\gamma(k), k) : k \in [n]},
\]
where the \textit{label} of the edge $(\gamma(k), k)$ is $k$. 

For a partition $\pi \in \PP(n)$, we define the oriented labelled multi-graph $G(\pi)$ by the quotient of $G_n$ with respect to $\pi$, which is obtained by identifying the vertices of $G_n$ that belong to the same block in $\pi$. Then $G(\pi)$ is a graph with set of vertices $\pi$ and set of edges
\[
    \set*{e_k := ([\gamma(k)]_\pi, [k]_\pi) : k \in [n]},
\]
where $[k]_\pi$ denotes the block of $\pi$ that contains $k$ and the \textit{label} of the edge $e_k$ is $k$. In addition, the permutation $\gamma^{-1}$, read as a cycle on the vertices of $G(\pi)$, is an Eulerian circuit\footnote{An Eulerian circuit on an oriented graph is a closed walk that visits each directed edge exactly once, starting and ending at the same vertex. To exist, every vertex of the graph must have an equal number of incoming and outgoing edges.} on the graph. Therefore, the graph $G(\pi)$ is connected (considering the orientation of the edges).

The graph $G(\pi)$ induces a new partition $\overline\pi \in \PP(n)$ whose blocks consist of the labels of the edges in $G(\pi)$ that have the same set of the endpoints (regardless of orientation). We let  $\overline{G}(\pi)$  be the elementary graph  obtained by identifying the edges of $G(\pi)$ that belong to the same block in $\overline\pi$ and ignoring the orientation of the edges. Then the graph $\overline G(\pi)$ has $\#(\pi)$ vertices given by the blocks in $\pi$ and $\#(\overline\pi)$ edges given by the set
\[
    \set*{\set{[\gamma(k)]_\pi, [k]_\pi} : k \in [n]}.
\]
Since $G(\pi)$ is connected, the graph $\overline G(\pi)$ is connected. Therefore, the following condition is satisfied
\begin{equation}\label{eq:vertices_edges_relation}
    \#(\pi) \leq \#(\overline\pi) + 1.
\end{equation}

Let $\bm j \in J^n$ and $\bm \epsilon \in \Z_2^n$ be fixed and let $\bm i : [n] \to [N]$. Denote by $\ker(\bm i) \in \PP(n)$ the partition of $[n]$ such that $u \sim v$ if $i_u = i_v$. Since $\set{(W_{kk}^{(j)})_k, (W_{kl}^{(j)})_{k < l}}_{j \in J}$ are independent families of i.i.d. random variables, we have that
\[
    \esp\bracket*{W_{i_1i_2}^{(j_1, \epsilon_1)} \cdots W_{i_n i_1}^{(j_n, \epsilon_n)}} = \esp\bracket*{W_{i'_1 i'_2}^{(j_1, \epsilon_1)} \cdots W_{i'_n i'_1}^{(j_n, \epsilon_n)}}
\]
whenever $\ker(\bm i) = \ker(\bm i')$ and an order condition is satisfied, see below. 

Indeed, let $\pi = \ker(\bm i) = \{V_1, \dots, V_k\}$ and $v_1, \dots, v_k \in[N]$ be $k$ distinct values and set $\bm i|_{V_l} = v_l$. Then
\begin{equation*}
    \esp\bracket*{W_{i_1i_2}^{(j_1, \epsilon_1)} \cdots W_{i_n i_1}^{(j_n, \epsilon_n)}}
\end{equation*}
depends only on the order of the set $\{v_1, \dots, v_k\}$. We set
\begin{equation}\label{eq:def_E_pi_j_eps}
    E(\pi, \bm j, \bm \epsilon) = \frac{1}{k!} 
    \sum_{\bm i|_{V_l} = v_{\sigma(l)}}
    \esp\bracket*{W_{i_1i_2}^{(j_1, \epsilon_1)} \cdots W_{i_n i_1}^{(j_n, \epsilon_n)}},
\end{equation}
where we sum $\sigma$ over all $k!$ permutations of $\{v_1, \dots, v_k\}$. Then, since we can choose $\set{v_1, \ldots, v_k}$ from $[N]$ in $\binom{N}{k}$ different ways,
\begin{equation}\label{eq:sum_E_pi_j_eps}
    \sum_{\ker(\bm i) = \pi} \esp\bracket*{W_{i_1i_2}^{(j_1, \epsilon_1)} \cdots W_{i_n i_1}^{(j_n, \epsilon_n)}}
= (N)_{\#(\pi)} E(\pi,\bm j, \bm \epsilon),
\end{equation}
where
$$(N)_{\#(\pi)} = N(N - 1) \cdots (N - \#(\pi) + 1).$$

\noindent
Note that the expectation factorizes according to the blocks of $\overline{\pi}$ as
\begin{equation}\label{eq:E_pi_j_eps_factorization}
    \esp\bracket*{W_{i_1i_2}^{(j_1, \epsilon_1)} \cdots W_{i_n i_1}^{(j_n, \epsilon_n)}} = \prod_{\set{l_1, \ldots,  l_k} \in \overline\pi} \esp\bracket*{W_{i_{l_1}i_{\gamma(l_1)}}^{(j_{l_1}, \epsilon_{l_1})}\cdots W_{i_{l_k}i_{\gamma(l_k)}}^{(j_{l_k}, \epsilon_{l_k})}}.
\end{equation}

\begin{remark}\label{rmk:block_size_two}
Let $\bm i : [n] \to [N]$ and set $\pi = \ker(\bm i)$. If $\#(\pi) = 1$, then
\[
    E(\pi, \bm j, \bm \epsilon) = \esp\bracket*{W_{i_1i_1}^{(j_1, \epsilon_1)} \cdots W_{i_1i_1}^{(j_n, \epsilon_n)}}.
\]
If in addition $n$ is even, $\#(\pi) = 2$, $\#(\overline\pi) = 1$ and the number of edges of $G(\pi)$ in one orientation is the same as the number of edges in the opposite orientation, then
\begin{align*}
    E(\pi, \bm j, \bm\epsilon)
    &= \frac{1}{2}\paren*{\esp\bracket*{W_{i_1i_2}^{(j_1, \epsilon_1)} \cdots W_{i_2i_1}^{(j_n, \epsilon_n)}} + \esp\bracket*{W_{i_1i_2}^{(j_1, -\epsilon_1)} \cdots W_{i_2i_1}^{(j_n, -\epsilon_n)}}}\\
    &= \re\paren*{\esp\bracket*{W_{i_1i_2}^{(j_1, \epsilon_1)} \cdots W_{i_2i_1}^{(j_n, \epsilon_n)}}}.
\end{align*}
In particular, since $r^2$ and $\sigma^2$ are real numbers, when $n = 2$ it follows that
\[
    E(\pi, \bm j, \bm \epsilon) = \esp\bracket*{W_{i_1i_2}^{(j_1, \epsilon_1)} W_{i_2i_1}^{(j_2, \epsilon_2)}}.
\]
\end{remark}

Let $S = \set{s_1 < \ldots < s_n}$ be a finite totally ordered set. We extend the definition of $G(\pi)$, $\overline G(\pi)$, $\overline\pi$ and $E(\pi, \bm j, \bm \epsilon)$ for $\pi \in \PP(S)$ by identifying $S$ with $[n]$ via the unique order-preserving isomorphism given by $s_k \mapsto k$. Under this identification, the structure of $G(\pi)$ is determined by the permutation $\gamma$, while the edges remain explicitly indexed by the elements of $S$. Specifically, we have $e_{s_k} = ([s_{\gamma(k)}]_\pi, [s_k]_\pi)$.

\begin{lemma}\label{lemma:edge upper bound}
    If $\overline\pi$ has a singleton, then $E(\pi, \bm j, \bm \epsilon) = 0$. In particular, if $\pi$ is such that $\#(\pi) > n/2 + 1$, then $E(\pi, \bm j, \bm \epsilon) = 0$.
    
\end{lemma}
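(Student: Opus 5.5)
The plan is to prove the first claim directly from the factorization \eqref{eq:E_pi_j_eps_factorization}. By \eqref{eq:def_E_pi_j_eps}, $E(\pi,\bm j,\bm\epsilon)$ is an average of expectations $\esp\bigl[W_{i_1i_2}^{(j_1,\epsilon_1)}\cdots W_{i_ni_1}^{(j_n,\epsilon_n)}\bigr]$ over indices $\bm i$ with $\ker(\bm i)=\pi$. So it suffices to show that each such expectation vanishes when $\overline\pi$ has a singleton block $\{l\}$.

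\emph{Step 1: isolate the singleton.} For a fixed $\bm i$ with $\ker(\bm i)=\pi$, I will use \eqref{eq:E_pi_j_eps_factorization} to split the expectation as a product over the blocks of $\overline\pi$. This is legitimate because edges in different blocks of $\overline\pi$ join different unordered pairs of vertices. Hence they involve entries $W_{ab}$ with different index sets $\{a,b\}$, and these are independent. Within a single matrix this uses $W_{ab}=\overline{W_{ba}}$ together with independence of $(W_{ab})_{a\le b}$. Across different $j$ it uses independence of the family. The transpose only replaces $W_{ab}$ by $W_{ba}=\overline{W_{ab}}$, which keeps the same unordered pair.

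\emph{Step 2: the singleton factor is zero.} The singleton block contributes the factor $\esp\bigl[W^{(j_l,\epsilon_l)}_{i_li_{\gamma(l)}}\bigr]$. This is either $\esp[W_{ab}]$ or $\esp[\overline{W_{ab}}]$ for a single entry, and it vanishes because all entries are centred. This holds whether the edge is a loop ($i_l=i_{\gamma(l)}$, a diagonal entry) or not. Averaging over the relabellings in \eqref{eq:def_E_pi_j_eps} preserves the vanishing, so $E(\pi,\bm j,\bm\epsilon)=0$.

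\emph{Step 3: the counting consequence.} Suppose $E(\pi,\bm j,\bm\epsilon)\neq0$. Then $\overline\pi$ has no singleton, so every block has at least two elements and $\#(\overline\pi)\le n/2$. Combined with \eqref{eq:vertices_edges_relation}, which comes from connectedness of $\overline G(\pi)$, this gives $\#(\pi)\le \#(\overline\pi)+1\le n/2+1$. The contrapositive is the second claim.

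There is no real obstacle here. The only point needing care is checking in Step 1 that the factorization respects the transpose and conjugation conventions, so that distinct blocks of $\overline\pi$ really correspond to independent entries.
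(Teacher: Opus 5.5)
Your proof is correct and follows the paper's argument. The first claim comes from factorizing over the blocks of $\overline\pi$ and using that the entries are centred. The second comes from combining $\#(\pi)\le\#(\overline\pi)+1$ with the bound $\#(\overline\pi)\le n/2$ for a singleton-free $\overline\pi$; your contrapositive phrasing is equivalent to the paper's direct one, and your check in Step 1 that transposes and independence across $j$ respect the factorization is a detail the paper leaves implicit.
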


\begin{proof}
    If $\overline\pi$ has a singleton, by \eqref{eq:def_E_pi_j_eps} and \eqref{eq:E_pi_j_eps_factorization}, we get that $E(\pi, \bm j, \bm \epsilon) = 0$. In particular, if $\#(\pi) > n/2 + 1$, it follows by \eqref{eq:vertices_edges_relation} that $\#(\overline\pi) > n/2$. Therefore, the partition $\overline{\pi}$ has a singleton. Hence, $E(\pi, \bm j, \bm \epsilon) = 0$.    
\end{proof}

\begin{remark}
In consequence, if $E(\pi, \bm j, \bm \epsilon) \neq 0$ then each block of $\overline\pi$ is of size at least two. Thus, 
\begin{equation}\label{eq:edge upper bound}
    \#(\pi) - 1 \leq \#(\overline{\pi}) \leq n/2.
\end{equation}
\end{remark}

Now, by \eqref{eq:sum_E_pi_j_eps}, we can write
\[
    \esp\bracket*{\tr\paren*{
    W_{j_1}^{(\epsilon_1)} \cdots W_{j_n}^{(\epsilon_n)}}} 
    = 
    \sum_{\pi \in \PP(n)} (N)_{\#(\pi)}N^{-(n/2 + 1)} E(\pi, \bm j, \bm \epsilon).
\]
Moreover,
\[
    \frac{(N)_{\#(\pi)}}{N^{n/2 + 1}} = N^{\#(\pi) - \frac{n}{2} - 1} - \binom{\#(\pi)}{2} N^{\#(\pi) - \frac{n}{2} - 2} + O(N^{\#(\pi) - \frac{n}{2} -3}).
\]
Then, the only cases where a term of order $N^0$ or a term of order $N^{-1}$ appears in the expansion of the mixed moment are: $\#(\pi) = n/2 + 1$ and $\#(\pi) = n/2$.

Consequently,
\begin{equation}\label{eq:expansion_mixed_moment}
    \esp\bracket*{\tr\paren*{W_{j_1}^{(\epsilon_1)} \cdots W_{j_n}^{(\epsilon_n)}}} = A_n + N^{-1} \paren*{B_n - \binom{\frac{n}{2} + 1}{2}A_n} + O(N^{-2}),
\end{equation}
where
\begin{align*}
    A_n &= \sum_{\substack{\pi \in \PP(n)\\ \#(\pi) = n/2 + 1}} \kern-0.75em  E(\pi, \bm j, \bm \epsilon), 
    \qquad B_n = \sum_{\substack{\pi \in \PP(n)\\ \#(\pi) = n/2}}\kern-0.5em  E(\pi, \bm j, \bm \epsilon).
\end{align*}
Observe that $A_n = B_n = 0$ for $n$ odd. Assume from now on that $n$ is even.

\section{The Asymptotic Expansion of Mixed Moments}

\subsection{The leading term $A_n$}

\begin{definition}
We say that $G(\pi)$ is a \textit{double-tree} if $\overline{G}(\pi)$ is a tree and $\overline\pi$ is a pairing such that paired edges have opposite orientations. 
\end{definition}
We prove, subject to certain constraints, that the only partitions $\pi \in \PP(n)$ with $\#(\pi) = \frac{n}{2} + 1$ and $E(\pi, \bm j, \bm \epsilon) \neq 0$ are those whose associated graph $G(\pi)$ is a double-tree.

\begin{definition}\label{def:turn-around}
We define a \textit{turn-around point} of $\pi$ to be an element $k$ in a block $V \in \pi$ such that $\gamma^{-1}(k) \sim_{\pi} \gamma(k)$ and $k \not\sim_{\pi} \gamma(k)$. More succinctly, we say that $k$ is a turn-around point of the block $V$. In addition, if $\gamma^{-1}(k) \sim_{\pi} \gamma(k)$ then $\gamma^{-1}(k) \sim_{\overline\pi} k$. If $V$ is a singleton with a turn-around point, we call $V$ a \textit{double-leaf} of $\pi$. Also, given $A \subseteq [n]$, we define
\[
    \pi|_A = \set{V \cap A : V \in \pi, V \cap A \neq \varnothing}.
\]
\end{definition}

\begin{lemma}\label{lemma:pruning}
Let $\pi \in \PP(n)$ with a singleton $\set{k}$. The block $\set{k}$ is a double-leaf of $\pi$ if and only if $\set{\gamma^{-1}(k), k} \in \overline{\pi}$. Suppose $\set{k}$ is a double-leaf of $\pi$ and let $\rho = \pi|_{[n] \setminus \set{\gamma^{-1}(k), k}}$.  
Then
\begin{itemize}
    \item $\overline{\pi} = \set{\gamma^{-1}(k), k} \cup \overline{\rho}$,
    \item $\overline{\pi} \in \NC(n)$ if and only if $\overline{\rho} \in \NC([n]\setminus \{ \gamma^{-1}(k), k\})$.
\end{itemize}

\end{lemma}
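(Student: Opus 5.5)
The proof will work directly from the definitions of $G(\pi)$ and $\overline\pi$. The basic observation is that the block $\{k\}$ is the endpoint of exactly two edges of $G(\pi)$. One is $e_{\gamma^{-1}(k)} = ([k]_\pi, [\gamma^{-1}(k)]_\pi)$. The other is $e_k = ([\gamma(k)]_\pi, [k]_\pi)$. No other edge $e_l$ can touch the vertex $\{k\}$, because that would require $l = k$ or $\gamma(l) = k$.

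For the equivalence, the first step is to note that $\{k\}$ is a singleton, so $k \not\sim_\pi \gamma(k)$ automatically (the case $n=1$ is trivial). Hence $\{k\}$ is a double-leaf exactly when $\gamma^{-1}(k) \sim_\pi \gamma(k)$. That condition says precisely that the two edges at $\{k\}$ have the same endpoint set, i.e. that $\gamma^{-1}(k) \sim_{\overline\pi} k$. Since no other edge has $\{k\}$ as an endpoint, the $\overline\pi$-block of $k$ is contained in $\{\gamma^{-1}(k), k\}$. So $\gamma^{-1}(k) \sim_{\overline\pi} k$ is equivalent to $\{\gamma^{-1}(k), k\} \in \overline\pi$.

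For the decomposition $\overline\pi = \{\gamma^{-1}(k), k\} \cup \overline\rho$, the plan is to compare $G(\rho)$ with $G(\pi)$ using the order-preserving identification of $S = [n]\setminus\{\gamma^{-1}(k), k\}$ with $[n-2]$. The cyclic successor in $S$ of an element $l \in S$ is $\gamma(l)$ unless $\gamma(l) = \gamma^{-1}(k)$, i.e. $l = \gamma^{-2}(k)$. In that case the successor is $\gamma(k)$. Therefore every edge $e_l$ with $l \in S$ has the same endpoint blocks in $G(\rho)$ as in $G(\pi)$, after restricting blocks to $S$. The one exception is $l = \gamma^{-2}(k)$: its endpoints in $G(\pi)$ are $[\gamma^{-1}(k)]_\pi$ and $[l]_\pi$, while in $G(\rho)$ they are $[\gamma(k)]_\rho$ and $[l]_\rho$. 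These coincide after restriction because $\gamma^{-1}(k) \sim_\pi \gamma(k)$. The map $V \mapsto V \cap S$ from blocks of $\pi$ other than $\{k\}$ to blocks of $\rho$ is a bijection. The only block that could lose an element is the block of $\gamma^{-1}(k)$, and it still contains $\gamma(k)$ (degenerate small-$n$ cases are checked separately). Hence two edges with labels in $S$ share endpoint sets in $G(\pi)$ if and only if they do in $G(\rho)$. This gives $\overline\pi|_S = \overline\rho$, which together with the first part yields the decomposition.

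The non-crossing statement then follows from the elementary fact that adding a block of two cyclically adjacent elements $\{\gamma^{-1}(k), k\}$ to a partition of the remaining points preserves and reflects non-crossingness, both for $\NC(n)$ and for $\NC$ of the totally ordered set $S$. The main obstacle is the bookkeeping around the wrap-around edge $l = \gamma^{-2}(k)$ and the degenerate cases ($n \le 2$, or $\gamma(k) = \gamma^{-1}(k)$). I would handle the latter first by hand so that the general argument can assume $n \ge 4$.
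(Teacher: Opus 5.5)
Your proof is correct and is essentially the paper's proof. Both arguments use that $e_{\gamma^{-1}(k)}$ and $e_k$ are the only edges at the vertex $\{k\}$ to get the equivalence, and both deduce the non-crossing statement from $\{\gamma^{-1}(k),k\}$ being a cyclic interval. You go beyond the paper in one place: you check explicitly the wrap-around edge $e_{\gamma^{-2}(k)}$, whose successor in $S$ becomes $\gamma(k)$, which is harmless since $\gamma^{-1}(k)\sim_\pi\gamma(k)$. This justifies the decomposition $\overline\pi=\{\gamma^{-1}(k),k\}\cup\overline\rho$, which the paper only asserts. One small correction: $n=1$ is not a trivial case but an actual exception to the first equivalence, since there $\{\gamma^{-1}(k),k\}=\{k\}\in\overline\pi$ although $\{k\}$ is not a double-leaf. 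This does not matter for the even $n$ where the lemma is applied.
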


\begin{proof}
Note that $\gamma^{-1}(k)$ and $k$ are the only edges in $G(\pi)$ with $\set{k}$ as an endpoint. Thus, $\gamma^{-1}(k) \sim_{\pi} \gamma(k)$ if and only if $\gamma^{-1}(k) \sim_{\overline\pi} k$. Moreover, since $\set{k}$ is a singleton of $\pi$, then $\gamma^{-1}(k) \sim_{\overline\pi} k$ if and only if $\set{\gamma^{-1}(k), k} \in \overline{\pi}$. This proves the first claim.
    
Now assume that $\set{k}$ is a double-leaf. Then
\[
    \overline{\pi} = \set{\gamma^{-1}(k), k} \cup \overline{\rho},
\]
where $\rho = \pi|_{[n] \setminus \set{\gamma^{-1}(k), k}}$. In addition, since $\set{\gamma^{-1}(k), k}$ is a cyclic interval in $\overline{\pi}$ and $\set{k}$ is a singleton in $\pi$, we conclude that $\overline{\pi}$ is non-crossing if and only if $\overline{\rho}$ is non-crossing.
\end{proof}

\begin{remark}\label{rmk:pruning}
Let $\pi \in \PP(n)$ with a double-leaf $\set{k}$. Let $\pi_1 = \pi|_{[n] \setminus \set{\gamma^{-1}(k), k}}$ and  $\pi_2 = \set{\set{\gamma^{-1}(k)}, \set{k}}$. By the construction of $G(\pi)$ and Lemma \ref{lemma:pruning} above, the graph $G(\pi_1)$ is obtained from $G(\pi)$ by deleting the vertex $\set{k}$ (and  the edges $\gamma^{-1}(k)$, $k$) and then identifying $\gamma^{-1}(k)$ with $\gamma(k)$ in $\pi$. 

Let $\bm j_1 = \bm j|_{[n] \setminus \set{\gamma^{-1}(k), k}}$ and $\bm j_2 = \bm j|_{\set{\gamma^{-1}(k), k}}$. Note that the deleted edges have opposite orientations. Let $\bm \epsilon_1 = \bm \epsilon|_{[n] \setminus \{ \gamma^{-1}(k), k\}}$ and $\bm \epsilon_2 = \bm \epsilon|_{\{ \gamma^{-1}(k), k\}}$.  Moreover, since $\set{k} \in \pi$, $\set{\gamma^{-1}(k), k} \in \overline\pi$, $\gamma^{-1}(k) \sim_\pi \gamma(k)$, using Remark~\ref{rmk:block_size_two} it follows that
\[
    E(\pi, \bm j, \bm \epsilon) = E(\pi_1, \bm j_1, \bm \epsilon_1)
    E(\pi_2, \bm j_2, \bm \epsilon_2).
\]     
\end{remark}
\begin{notation}
We denote by $\DT(S) \subseteq \PP(S)$ the set of partitions $\pi$ whose associated graph $G(\pi)$ is a double-tree. For $\pi \in \DT(n)$, let $f(\pi, \bm \epsilon) = r^{2p} \sigma^{2 q}$ where $p$ is the number of pairs $(u, v) \in \overline\pi$ with $\epsilon_u = - \epsilon_v$ and $q$ is the number of pairs $(u, v) \in \overline\pi$ with $\epsilon_u =  \epsilon_v$.
\end{notation}

\begin{lemma}\label{lemma:double-trees}
Let $\pi \in \PP(n)$ with $\#(\pi) = n/2 + 1$. Then, for $r^2 \not = 0$, $E(\pi, \bm j, \bm \epsilon) \neq 0$ if and only if $\pi \in \DT(n)$ and $\overline\pi \leq \ker(\bm j)$. For $r^2 = 0$, the condition becomes $\pi \in \DT(n)$ and $\overline\pi \leq \ker(\bm j) \wedge \ker(\bm \epsilon)$. When either of these conditions holds, we have that $\overline\pi \in \NC_2(n)$, $\Kr(\pi) = \overline\pi$ and $E(\pi, \bm j, \bm \epsilon) = f(\pi , \bm \epsilon)$.
\end{lemma}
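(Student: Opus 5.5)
The plan is a counting argument followed by induction on $n$ via the pruning Lemma~\ref{lemma:pruning}.

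\emph{Structure.} Suppose $\#(\pi)=n/2+1$ and $E(\pi,\bm j,\bm\epsilon)\neq 0$. By Lemma~\ref{lemma:edge upper bound} and \eqref{eq:edge upper bound} we get $n/2\le \#(\overline\pi)\le n/2$. Every block of $\overline\pi$ has size at least two, so $\overline\pi$ is a pairing. Then $\overline G(\pi)$ is a connected graph with $n/2+1$ vertices and $n/2$ edges, hence a tree. In particular it has no loops, so no diagonal entries $W_{ii}$ occur.

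For orientations, take a pair $\{u,v\}\in\overline\pi$. Its edge in $\overline G(\pi)$ is a bridge. The Eulerian circuit $\gamma^{-1}$ of $G(\pi)$ is a closed walk, so it crosses this bridge equally often in each direction. Since it crosses exactly twice (edges $u$ and $v$), these two edges have opposite orientations. Hence $G(\pi)$ is a double-tree.

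\emph{Value of $E$.} By \eqref{eq:E_pi_j_eps_factorization}, the expectation factors over the pairs $\{u,v\}$ into terms $\esp[W^{(j_u,\epsilon_u)}_{ab}W^{(j_v,\epsilon_v)}_{ba}]$ with $a\ne b$. By independence this vanishes unless $j_u=j_v$, which gives $\overline\pi\le\ker(\bm j)$. If $j_u=j_v$, the term is $\esp|W_{ab}|^2=\sigma^2$ when $\epsilon_u=\epsilon_v$, and $\esp[W_{ab}^2]$ or its conjugate when $\epsilon_u=-\epsilon_v$ (using $W^t_{ba}=W_{ab}$). Averaging over orderings in \eqref{eq:def_E_pi_j_eps}, and using that $r^2$ is real as in Remark~\ref{rmk:block_size_two}, this gives $r^2$. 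Therefore $E=f(\pi,\bm\epsilon)$. This is nonzero exactly when $\overline\pi\le\ker(\bm j)$ if $r^2\ne0$, and exactly when additionally $\overline\pi\le\ker(\bm\epsilon)$ if $r^2=0$ (using $\sigma>0$). The converse direction, from double-tree plus the kernel conditions to $E\neq 0$, is the same computation read backwards.

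\emph{Non-crossing and Kreweras.} I would argue by induction on $n$, with base case $n=2$, where $\pi=\{\{1\},\{2\}\}$ and everything is explicit. A tree with at least two vertices has a leaf vertex $V$. Each $k\in V$ contributes two edge-incidences at $V$, namely edges $\gamma^{-1}(k)$ and $k$. Since $V$ has degree $2$ in $G(\pi)$, $V=\{k\}$ is a singleton. Its two edges are paired, so by Lemma~\ref{lemma:pruning} $\{k\}$ is a double-leaf. Pruning gives $\rho=\pi|_{[n]\setminus\{\gamma^{-1}(k),k\}}$, whose graph is again a double-tree (Remark~\ref{rmk:pruning}) with $\#(\rho)=(n-2)/2+1$. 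By induction $\overline\rho$ is non-crossing, so $\overline\pi$ is too by Lemma~\ref{lemma:pruning}. The same induction shows $\pi\in\NC(n)$, since adding a singleton in the cyclic gap between $\gamma^{-1}(k)$ and $\gamma(k)$, which lie in the same block, preserves non-crossingness.

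For $\Kr(\pi)=\overline\pi$, I would check that reinserting the singleton $\{k\}$ into $\pi$ corresponds to adding the pair $\{\gamma^{-1}(k),k\}$ to the Kreweras complement of $\rho$. This is a local computation with $\pi^{-1}\gamma$ on the cyclic interval $\gamma^{-1}(k),k,\gamma(k)$. I expect this to be the main obstacle, because it depends on the paper's convention for $\Kr$ (interleaving of $k$ versus $\bar k$, and $\gamma$ versus $\gamma^{-1}$). I would first verify it directly for $n=2,4$ to pin down the convention and then run the induction. A fallback is the count $\#(\pi)+\#(\overline\pi)=n+1$ together with $\overline\pi\le\Kr(\pi)$: the latter holds because paired edges $u,v$ have the same endpoints, so $\pi^{-1}\gamma$ should map within $\{u,v\}$. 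Since $\#(\Kr(\pi))=n+1-\#(\pi)=n/2=\#(\overline\pi)$, this forces equality.
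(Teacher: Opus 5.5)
Your proof is correct, and its structural half takes a different route from the paper. The paper does everything in one induction on $n$. Once $\overline\pi$ is a pairing and $\overline G(\pi)$ is a tree, it prunes a leaf $\{k\}$ and factors $E(\pi,\bm j,\bm\epsilon)$ over $\{\{\gamma^{-1}(k)\},\{k\}\}$ and the pruned partition. It then reads off five things from the base case and the induction hypothesis by regluing: the opposite orientations, $\overline\pi\le\ker(\bm j)$, the value $f(\pi,\bm\epsilon)$, non-crossingness, and $\Kr(\pi)=\overline\pi$.

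You get the double-tree property directly instead. Your cut argument shows that opposite orientation is forced by the tree structure alone: the closed walk $\gamma^{-1}$ crosses the bridge carrying $\{u,v\}$ equally often in each direction, and only $e_u,e_v$ cross it. Then $E=f(\pi,\bm\epsilon)$ follows from a single factorization over the pairs. This separates the probabilistic content from the combinatorics, so your induction is needed only for purely combinatorial facts about double-trees. You also record $\pi\in\NC(n)$, which the paper uses tacitly when it writes $\Kr(\pi)$.

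For the Kreweras step, your primary plan is the paper's, and it works with $\Kr(\pi)=\pi^{-1}\gamma$, blocks read as increasing cycles (your $n=4$ check will confirm this convention):
\begin{itemize}
\item $\pi^{-1}\gamma(\gamma^{-1}(k))=k$, since $\{k\}$ is a singleton;
\item $\pi^{-1}\gamma(k)=\gamma^{-1}(k)$, since only $k$ lies between $\gamma^{-1}(k)$ and $\gamma(k)$;
\item elsewhere $\pi^{-1}\gamma$ agrees with the complement computed for $\rho$ on $[n]\setminus\{\gamma^{-1}(k),k\}$.
\end{itemize}

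Your fallback, however, is not justified as written. Equal endpoints with opposite orientations give only $v\sim_\pi\gamma(u)$, not $\pi^{-1}(\gamma(u))=v$. For example, $\pi=\{\{1,4\},\{2,6\},\{3,5\}\}\in\U_{0,3}(6)$ has $\overline\pi=\{\{1,6\},\{2,5\},\{3,4\}\}$ with exactly this property, yet $\pi^{-1}\gamma(6)=\pi^{-1}(1)=4$.

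What is missing is the tree structure once more. $\pi^{-1}(\gamma(u))$ labels the edge along which the walk $\gamma^{-1}$, having left $[\gamma(u)]_\pi$ along $e_u$, first returns to that vertex. In a tree this return must recross the same bridge, so it is $e_v$. That gives $\pi^{-1}\gamma=\overline\pi$ outright. Then $\#(\pi)+\#(\pi^{-1}\gamma)=n+1$, together with the genus criterion recalled in the preliminaries, yields $\pi\in\NC(n)$ and $\overline\pi\in\NC_2(n)$. With this, your whole argument becomes induction-free.
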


\begin{proof}
We proceed by induction on $n$. If $n = 2$, the only partition $\pi \in \PP(n)$ with $\#(\pi) = \frac{n}{2} + 1$ is $\pi = \set{\set{1}, \set{2}}$. By the construction of $G(\pi)$, we get that $\pi \in \DT(2)$ and $\overline\pi = \set{\set{1,2}}$. Therefore, $\overline\pi \in \NC_2(n)$ and $\Kr(\pi) = \overline\pi$. Moreover,
\begin{equation}\label{eq:e equals two}
    E(\pi, \bm j, \bm \epsilon) = \begin{cases}
        \sigma^2 & \text{if } \ker(\bm j) = \ker(\bm \epsilon) = 1_2,\\
        r^2 & \text{if } \ker(\bm j)  = 1_2 \not = \ker(\bm \epsilon), \\
        0 & \text{otherwise.}
    \end{cases}
\end{equation}
When $r^2 \neq 0$, we then get $E(\pi, \bm j, \bm \epsilon) \neq 0$ if and only if $\overline\pi \leq \ker(\bm j)$. When $r^2 = 0$, 
the condition becomes $\overline\pi \leq \ker(\bm j) \wedge \ker(\bm \epsilon)$.

Suppose that the statement of the lemma is satisfied for some even integer $n \geq 2$. Let $\pi \in \PP(n+2)$ be a partition such that $\#(\pi) = \frac{n}{2} + 2$. When $r^2 \not = 0$, if $\pi \in \DT(n+2)$ and $\overline{\pi} \leq \ker(\bm j)$, it follows by \eqref{eq:E_pi_j_eps_factorization} and Remark~\ref{rmk:block_size_two} that 
$E({\pi}, \bm j, \bm \epsilon) = r^{2p} \sigma^{2q} \not= 0$.
When $r^2 = 0$, by the same argument as before, it follows that $E({\pi}, \bm j, \bm \epsilon) = \sigma^{n} \not= 0$ when $\overline{\pi} \leq \ker(\bm j) \wedge \ker(\bm \epsilon)$.
    
Assume that $E(\pi, \bm j, \bm \epsilon) \neq 0$. By Lemma \ref{lemma:edge upper bound} and \eqref{eq:edge upper bound}, it follows that $\overline\pi$ is a pairing. Note that $\overline G(\pi)$ is a connected graph with $\frac{n}{2} + 2$ vertices and $\frac{n}{2} + 1$ edges. Therefore $\overline G(\pi)$ is a tree and it must have a leaf. Let $V \in \pi$ be a leaf in $\overline G(\pi)$. Since $\overline\pi$ is a pairing, exactly two edges have $V$ as an endpoint in $G(\pi)$. By the construction of $G(\pi)$, we obtain that $V = \set{k}$ is a singleton such that $\set{\gamma^{-1}(k), k} \in \overline\pi$. Then, by Lemma \ref{lemma:pruning}, it follows that $\set{k}$ is a double-leaf and $\overline{\pi} = \overline\pi_1 \cup \overline\pi_2$, where
\[
    \pi_1 = \set{\set{\gamma^{-1}(k)}, \set{k}}, \qquad \pi_2 = \pi|_{[n+2] \setminus \set{\gamma^{-1}(k), k}}.
\]
    
Let $\bm j_1 = \bm j|_{\set{\gamma^{-1}(k), k}}$, $\bm \epsilon_1 = \bm \epsilon|_{\set{\gamma^{-1}(k), k}}$, $\bm j_2 = \bm j|_{[n+2] \setminus \set{\gamma^{-1}(k), k}}$ and $\bm \epsilon_2 = \bm \epsilon|_{[n+2] \setminus \set{\gamma^{-1}(k), k}}$. By Remark \ref{rmk:pruning}, it follows that
\[
    E(\pi, \bm j, \bm \epsilon) = E(\pi_1, \bm j_1, \bm \epsilon_1)E(\pi_2, \bm j_2, \bm \epsilon_2).
\]
Moreover, since $E(\pi, \bm j, \bm \epsilon) \neq 0$, we get that 
$$E(\pi_1, \bm j_1, \bm \epsilon_1) \neq 0 \qquad \text{and} \qquad E(\pi_2, \bm j_2, \bm \epsilon_2) \ab\neq 0.$$

Using the base case, we obtain that $\pi_1 \in \DT(\set{\gamma^{-1}(k), k})$ is a double-tree, $\Kr(\pi_1) = \overline\pi_1$, $\overline\pi_1 \leq \ker(\bm j_1)$ and
\[
E(\pi_1, \bm j_1, \bm \epsilon_1) = \begin{cases} \sigma^2 & \overline\pi_1 \leq \ker(\bm \epsilon_1), \\ r^2 & \overline\pi_1 \not\leq \ker(\bm \epsilon_1).
\end{cases}
\]
By the induction hypothesis, we get that $\pi_2 \in \DT([n+2] \setminus \set{\gamma^{-1}(k), k})$ is a double-tree and  that $\overline\pi_2 \leq \ker(\bm j_2)$. Moreover, $\overline\pi_2 \in \NC_2([n+2] \setminus \set{\gamma^{-1}(k), k})$, $\Kr(\pi_2) = \overline\pi_2$ and $E(\pi_2, \bm j_2, \bm \epsilon_2) = r^{2 p_2} \sigma^{2q_2}$, where $p_2$, respectively $q_2$, are the number of pairs $(u, v)$ in $\overline\pi_2$ where $\epsilon_u = - \epsilon_v$, respectively $\epsilon_u =  \epsilon_v$. Thus $E(\pi, \bm j, \bm \epsilon) = r^{2 p} \sigma^{2 q} = f(\pi, \bm \epsilon)$.

We conclude that $G(\pi)$ is a double-tree because $G(\pi)$ is obtained by joining the two double-trees $G(\pi_1)$ and $G(\pi_2)$  at a vertex. That is, $\pi \in \DT(n+2)$. In addition, $\overline\pi = \overline{\pi_1} \cup \overline{\pi_2} \leq \ker(\bm j)$. As $\overline\pi$ is obtained from $\overline\pi_2\in \NC_2([n+2] \setminus \set{\gamma^{-1}(k), k})$ by inserting the pair of cyclically adjacent elements in $\overline\pi_1$, we get that $\overline\pi \in \NC_2(n+2)$. 

Since $\{k\}$ is a singleton in $\pi$ and $\gamma^{-1}(k) \sim_\pi \gamma(k)$, we have that $\Kr(\pi_1) = \{\gamma^{-1}(k), \gamma(k)\}$ is a block of $\Kr(\pi)$; all the other blocks of $\Kr(\pi)$ are the blocks of $\Kr(\pi_2)$. Hence $\Kr(\pi)= \Kr(\pi_1) \cup \Kr(\pi_2)  = \overline\pi$. 

When $r^2 = 0$, we just have to impose the condition $\overline \pi \leq \ker(\bm \epsilon)$ to get $f(\pi, \bm \epsilon) = \sigma^n$.
\end{proof}

\begin{lemma}\label{lemma:description-double-trees}
Let $\pi \in \PP(n)$. Then, $\pi \in \DT(n)$ if and only if $\overline\pi = \Kr(\pi) \in \NC_2(n)$.
\end{lemma}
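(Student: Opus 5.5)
The plan is to prove both implications by induction on $n$, peeling off double-leaves with Lemma~\ref{lemma:pruning} and Remark~\ref{rmk:pruning}, as in the proof of Lemma~\ref{lemma:double-trees}. Here $\Kr(\pi)$ is the Kreweras complement, i.e.\ the partition given by the cycles of $\pi^{-1}\gamma$ (with $\pi$ read as the permutation whose cycles are its blocks in increasing order). The base case $n=2$ is immediate: $\pi=\{\{1\},\{2\}\}$ is the only double-tree, and $\overline\pi=\Kr(\pi)=1_2$. For the partition $1_2$ the graph has a single vertex, so it is not a double-tree. Moreover $\overline{1_2}=1_2$ while $\Kr(1_2)=0_2$, so the right-hand side fails as well.

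\emph{($\Rightarrow$)} Suppose $\pi\in\DT(n)$. Then $\overline\pi$ is a pairing and $\overline G(\pi)$ is a tree with $n/2$ edges, so $\#(\pi)=n/2+1$. Every vertex of a tree with more than one vertex has degree at least one, and a leaf $V$ of $\overline G(\pi)$ is incident to exactly one pair, hence to exactly two edges of $G(\pi)$. These two edges are the edges $\gamma^{-1}(k)$ and $k$ for each $k\in V$, so $V=\{k\}$ is a singleton with $\{\gamma^{-1}(k),k\}\in\overline\pi$, and Lemma~\ref{lemma:pruning} shows it is a double-leaf. Removing it produces $\rho=\pi|_{[n]\setminus\{\gamma^{-1}(k),k\}}$. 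By Remark~\ref{rmk:pruning}, $G(\rho)$ is $G(\pi)$ with a leaf and its two opposite edges deleted, so $\rho$ is again a double-tree. By induction $\overline\rho=\Kr(\rho)\in\NC_2$. The computation at the end of the proof of Lemma~\ref{lemma:double-trees} then gives $\Kr(\pi)=\Kr(\rho)\cup\{\{\gamma^{-1}(k),k\}\}=\overline\pi$. Alternatively, one can simply quote that lemma's conclusion with $\bm j,\bm\epsilon$ constant and $r^2=\sigma^2$: a double-tree then has $E\ne0$, so $\overline\pi\in\NC_2(n)$ and $\Kr(\pi)=\overline\pi$.

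\emph{($\Leftarrow$)} Suppose $\overline\pi=\Kr(\pi)\in\NC_2(n)$. Then $\#(\Kr(\pi))=n/2$, and $\#(\pi)+\#(\Kr(\pi))\le n+1$ with equality exactly when $\pi$ is non-crossing. Since $\overline G(\pi)$ is connected, \eqref{eq:vertices_edges_relation} gives $\#(\pi)\le n/2+1$. To get the reverse inequality I use the fact that a non-crossing pairing has a pair of cyclically adjacent elements $\{\gamma^{-1}(k),k\}$. Because this pair is a block of $\Kr(\pi)=\pi^{-1}\gamma$, and it is a $2$-cycle, we have $\pi^{-1}\gamma(\gamma^{-1}(k))=k$, i.e.\ $\pi(k)=k$. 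So $\{k\}$ is a singleton of $\pi$, and since the pair also lies in $\overline\pi$, Lemma~\ref{lemma:pruning} makes $\{k\}$ a double-leaf.

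Next I need $\rho$ to satisfy the same hypothesis on $n-2$ points. Lemma~\ref{lemma:pruning} gives $\overline\rho=\overline\pi\setminus\{\{\gamma^{-1}(k),k\}\}$, which is non-crossing. For the Kreweras side, removing a fixed point $k$ of $\pi$ together with $\gamma^{-1}(k)$, where $\gamma^{-1}(k)\sim_\pi\gamma(k)$, deletes the $2$-cycle $(\gamma^{-1}(k),k)$ from $\pi^{-1}\gamma$ and leaves the other cycles unchanged; this is the same computation as in Lemma~\ref{lemma:double-trees}. Hence $\overline\rho=\Kr(\rho)\in\NC_2$, and by induction $\rho\in\DT$. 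Finally, $G(\pi)$ is obtained from $G(\rho)$ by attaching a new leaf vertex $\{k\}$ through two oppositely oriented edges, which are $\gamma^{-1}(k)$, going from $k$ to $\gamma^{-1}(k)$, and $k$, going from $\gamma(k)$ to $k$. Therefore $\overline G(\pi)$ is still a tree, the pairing property is preserved, and $\pi\in\DT(n)$.

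The main obstacle is the Kreweras bookkeeping in the converse, namely checking that deleting the adjacent pair commutes with taking $\Kr$ when $\pi$ is not yet known to be non-crossing. I would verify directly that for $j\notin\{\gamma^{-1}(k),k\}$ the value $\rho^{-1}\gamma_\rho(j)$ agrees with $\pi^{-1}\gamma(j)$. The only index where this needs care is $j=\gamma^{-2}(k)$. In the reduced cycle $\gamma_\rho$ sends it to $\gamma(k)$, whereas in $\gamma$ it is sent to $\gamma^{-1}(k)$. Since $\gamma^{-1}(k)$ and $\gamma(k)$ lie in the same block of $\rho$, I expect the $\pi^{-1}$ versus $\rho^{-1}$ shift inside that block to exactly compensate for this difference.
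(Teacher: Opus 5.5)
Your proof is correct and follows essentially the same route as the paper. For $(\Rightarrow)$ the paper specializes Lemma~\ref{lemma:double-trees}, and your leaf-pruning induction is equivalent; for the converse you, like the paper, locate an adjacent pair $\{\gamma^{-1}(k),k\}$ of $\Kr(\pi)$, deduce that $\{k\}$ is a singleton and hence a double-leaf, prune it, and apply induction. The Kreweras bookkeeping you flag, which the paper passes over in one line, does close as you expect: $\pi(\gamma^{-1}(k))=\gamma(k)$ gives $\rho^{-1}(\gamma(k))=\pi^{-1}(\gamma^{-1}(k))$, including when $\gamma(k)$ is alone in its block of $\rho$, so every other cycle of $\pi^{-1}\gamma$ is a cycle of $\Kr(\rho)$.
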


\begin{proof}
If $\pi \in \DT(n)$, by setting $\bm j = \bm \epsilon = (1, \ldots, 1)$ and $r^2 = \sigma^2$ in Lemma \ref{lemma:double-trees}, it follows that $\overline\pi = \Kr(\pi) \in \NC_2(n)$.

Assume now that $\pi \in \PP(n)$ with $\overline\pi = \Kr(\pi) \in \NC_2(n)$. We proceed by induction on $n$ to prove that $\pi \in \DT(n)$. If $n = 2$, then $\overline\pi = \set{\set{1,2}}$. Hence, $\pi = \set{\set{1},\set{2}} \in \DT(n)$.

Suppose that the statement is true for some $n \geq 2$. We prove that the statement is satisfied for $\overline\pi = \Kr(\pi) \in \NC_2(n + 2)$. Since $\overline\pi$ is a non-crossing pairing, we have that $\overline\pi$ contains an interval $\set{\gamma^{-1}(k), k} \in \overline\pi$. Therefore, since $\overline\pi = \Kr(\pi)$, it follows that $\set{k} \in \pi$. Then, by Lemma \ref{lemma:pruning}, $\set{k}$ is a double-leaf of $G(\pi)$ and $\overline\pi = \overline{\pi}_1 \cup \overline\pi_2$, where
\[
    \pi_1 = \set{\set{\gamma^{-1}(k)}, \set{k}}, \qquad \pi_2 = \pi|_{[n+2] \setminus \set{\gamma^{-1}(k), k}}.
\]

Note that $\pi_1 \in \DT(\set{\gamma^{-1}(k), k})$ and, since $\set{k} \in \pi$ is a singleton, the partition $\pi_2 \in \PP(n)$ is such that $\overline\pi_2 = \Kr(\pi_2) \in \NC_2([n+2] \setminus \set{\gamma^{-1}(k), k})$. Then, by induction hypothesis, it follows that $\pi_2 \in \DT([n+2] \setminus \set{\gamma^{-1}(k), k})$. Finally, since $G(\pi)$ is obtained from the two double-trees $G(\pi_1)$ and $G(\pi_2)$ by joining at a vertex, we conclude that $\pi \in \DT(n+2)$.
\end{proof}

Using Lemma \ref{lemma:double-trees}, by the fact that $\Kr^{-1} : \NC_2(n) \to \PP(n)$ is one-to-one, it follows that
\begin{align}\label{eq:a expression}
    A_n  &= \sum_{\pi \in \DT(n)} f(\pi, \bm \epsilon) 
    \1_{\overline\pi \leq \ker(\bm j)} = \sum_{\pi \in \NC_2(n)} f(\Kr^{-1}(\pi), \bm \epsilon) 
    \1_{\pi \leq \ker(\bm j)} \notag\\
    & =
    \sum_{\pi \in \NC(n)} \kappa_\pi(w_{j_1}^{(\epsilon_1)}, \dots, w_{j_n}^{(\epsilon_n)}),
\end{align}
where $\kappa_2(w_i, w_i) = \sigma^2$ and $\kappa_2(w_i, w_i^t) = r^2$, and all other cumulants are $0$. This proves the first part of Theorem \ref{thm:main}.

\begin{theorem}\label{thm:first order freeness}
The limiting joint distribution of $\{W_j, W_j^t\}_{j \in J}$ is the joint distribution of a semi-circular family $\{w_j, w_j^t\}_{j \in J}$. Let $\cA_j$ be the unital algebra generated by $\{1, w_j, w_j^t\}$. Then the algebras $\{ \cA_j \}_{j \in J}$ are free. In addition, when $r^2 = 0$, we have that $w_j$ and $w_j^t$ are free for each $j$.
\end{theorem}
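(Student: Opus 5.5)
The plan is to read off everything from the expansion \eqref{eq:expansion_mixed_moment} together with the cumulant formula \eqref{eq:a expression}. By \eqref{eq:expansion_mixed_moment}, $\esp[\tr(W_{j_1}^{(\epsilon_1)}\cdots W_{j_n}^{(\epsilon_n)})] \to A_n$ as $N\to\infty$; for $n$ odd the limit is $0$. By Lemma~\ref{lemma:double-trees} and the bijection $\Kr^{-1}:\NC_2(n)\to \DT(n)$, $A_n$ equals
\[
\sum_{\pi\in\NC(n)}\kappa_\pi(w_{j_1}^{(\epsilon_1)},\dots,w_{j_n}^{(\epsilon_n)}),
\]
where only second-order cumulants are nonzero. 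These are
\[
\kappa_2(w_i^{(\epsilon)},w_{i'}^{(\epsilon')}) = \1_{i=i'}\bigl(\sigma^2\1_{\epsilon=\epsilon'}+r^2\1_{\epsilon\neq\epsilon'}\bigr).
\]

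First, I would realize these limits as moments of an actual semicircular family. Take a (possibly complex-covariance) semicircular family $\{w_j,w_j^t\}_{j\in J}$ in some non-commutative probability space, with covariance matrix given by the formula above. Such a family exists, for instance as a linear combination of free standard semicirculars when the covariance matrix is positive semidefinite. Alternatively, one can simply \emph{define} $\phi$ on the free algebra by the moment-cumulant formula with these cumulants. I would also note that a transpose map making this a real probability space is compatible, since the covariance is symmetric under $(\epsilon,\epsilon')\mapsto(-\epsilon',-\epsilon)$. Then the limiting joint moments of $\{W_j,W_j^t\}$ coincide with the joint moments of $\{w_j,w_j^t\}$, which proves the first sentence of the theorem, since by definition all cumulants of order $\ge 3$ vanish.

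For freeness I would use the criterion that freeness of the algebras $\cA_j$ is equivalent to the vanishing of mixed cumulants (\cite[Thm.~11.16]{ns}). Here it suffices to check this on the generators $\{w_j,w_j^t\}$, since cumulants of products reduce to cumulants of generators. The only nonzero cumulants are $\kappa_2(w_i^{(\epsilon)},w_{i'}^{(\epsilon')})$, and they vanish unless $i=i'$; this comes from the indicator $\1_{\overline\pi\le\ker(\bm j)}$ in Lemma~\ref{lemma:double-trees}. Hence every mixed cumulant involving two different indices $j$ vanishes, and $\{\cA_j\}$ are free.

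When $r^2=0$, Lemma~\ref{lemma:double-trees} strengthens the constraint to $\overline\pi\le\ker(\bm j)\wedge\ker(\bm\epsilon)$. Then $\kappa_2(w_j,w_j^t)=r^2=0$, so all cumulants mixing $w_j$ and $w_j^t$ vanish. Consequently the family $\{w_j,w_j^t\}_{j\in J}$ has no mixed cumulants at all and is free, and in particular $w_j$ and $w_j^t$ are free. The only mildly delicate point is justifying that the cumulants extracted from \eqref{eq:a expression} are the genuine free cumulants of the limit. This follows because the moment-cumulant formula determines the cumulants recursively (Möbius inversion), so matching the moment expansion for all $n$ and all index choices identifies them.
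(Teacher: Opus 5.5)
Your proposal is correct and follows the paper's argument. The paper obtains this theorem from Equation \eqref{eq:a expression}: via Lemma~\ref{lemma:double-trees} and the correspondence $\Kr^{-1}:\NC_2(n)\to\DT(n)$, it writes the limit $A_n$ as $\sum_{\pi\in\NC(n)}\kappa_\pi$, where the only nonzero cumulants are the covariances $\sigma^2$ and $r^2$, which vanish across different $j$ (and between $w_j$ and $w_j^t$ when $r^2=0$), and then invokes the vanishing of mixed cumulants. Your extra remarks on realizing the semicircular family (the covariance is positive semidefinite since $|r^2|\le\sigma^2$) and on checking cumulants only on generators simply make explicit what the paper leaves implicit.
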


\begin{remark}\label{rem:f notation and cumulants}
As we saw in Equation \eqref{eq:a expression}, for $\pi \in \DT(n)$, we have
\[
f(\pi, \bm\epsilon) = \kappa_{\overline\pi}
(w^{(\epsilon_1)}, \dots, w^{(\epsilon_n)}).
\]
\end{remark}

\subsection{The sub-leading term and $B_n$}

\begin{definition}\label{def:U_m_k}
For $m \in \set{0, 1}$ and $k \geq 1$, we say that $G(\pi)$ is a \textit{$(m,k)$-unicyclic} graph if one of the following conditions is satisfied:
\begin{itemize}
    \item \textbf{Case I:} $k = 1$ and $m = 0$. In this case the graph $\overline G(\pi)$ consists of a tree with a single loop attached to a vertex, i.e. $\overline G(\pi)$ is a unicyclic graph whose unique cycle has length 1. The partition $\overline\pi$ is a pairing such that the paired edges have opposite orientations.

    \item \textbf{Case II:} $k = 2$ and $m = 0$. In this case the graph $\overline G(\pi)$ is a tree. The partition $\overline\pi$ is a pairing, except for one block of size 4, such that the paired edges have opposite orientations. The block of size 4 must be decomposable into two pairs of edges in which each pair consists of edges with opposite orientations. 

    \item \textbf{Case III:} $k \geq 3$. In this case the graph $\overline G(\pi)$ is a unicyclic graph whose unique cycle has length $k$. The partition $\overline\pi$ is a pairing. For blocks of $\overline\pi$ not associated with the unique cycle in $\overline G(\pi)$, the paired edges have opposite orientations. When $m = 0$, each pair within the cycle consists of edges with opposite orientations; when $m = 1$, 
    they consist of edges with the same orientation.  
\end{itemize}
We call $m$ the \textit{orientation parameter.}
\end{definition}

We prove, subject to certain constraints, that the only partitions $\pi$ with $\#(\pi) = \frac{n}{2}$ and $E(\pi, \bm j, \bm \epsilon) \neq 0$ are those whose associated graph $G(\pi)$ is a $(m,k)$-unicyclic graph for some $m$ and some $k$.

\begin{notation}\label{notation:U_m_k}
We denote by $\U_{m,k}(S) \subseteq \mathcal{P}(S)$ the set of partitions $\pi$ whose associated graph $G(\pi)$ is an $(m,k)$-unicyclic graph.
\begin{itemize}
    \item For $\pi \in \U_{0,1}(n)$, let 
    \[
    \tilde{f}(\pi, \bm \epsilon) = s^2 r^{2p} \sigma^{2q},
    \] 
    where $p$ is the number of pairs $(u, v)$ of $\overline\pi$ such that $\epsilon_u = -\epsilon_v$ and $u \not\sim_\pi v$; and $q$ is the number of pairs $(u, v) \in \overline\pi$ such that $\epsilon_u  = \epsilon_v$ and $u \not\sim_\pi v$.
    \item For $\pi \in \U_{0,k}(n)$, with $k \geq 3$, let 
    \[
    f(\pi, \bm \epsilon) = r^{2p} \sigma^{2 q},
    \] 
    where $p$ is the number of pairs $(u, v) \in \overline\pi$ with $\epsilon_u = - \epsilon_v$; and $q$ is the number of pairs $(u, v) \in \overline\pi$ with $\epsilon_u =  \epsilon_v$.
    \item For $\pi \in \U_{1,k}(n)$, with $k \geq 3$, let 
    \[
    g(\pi, \bm \epsilon) = r^{2p} \sigma^{2q}, 
    \]
    where $p$ is the number of pairs $(u, v) \in \overline{\pi}$ with $\epsilon_u = -\epsilon_v$ and $u, v$ have the opposite orientation, or $\epsilon_u = \epsilon_v$ and $u, v$ the same orientation; and $q$ is the number of pairs $(u,v) \in \overline{\pi}$ with $\epsilon_u = \epsilon_v$ and $u, v$ have the opposite orientation, or $\epsilon_u = -\epsilon_v$ and $u, v$ the same orientation.
\end{itemize}
\end{notation}

\begin{lemma}\label{lemma:double-cycle}
Let $n \geq 6$ and let $\pi \in \PP(n)$ be such that $\overline{\pi}$ is a pairing and $\overline{G}(\pi)$ is a cycle. If $\pi$ does not have a turn-around point, then 
\[
    \pi = \overline{\pi} = \set{\set{1, n/2+1}, \set{2, n/2+2}, \ldots, \set{n/2, n}},
\]
and $\pi \in \U_{1, n/2}(n)$. If $\pi$ has a turn-around point, then
\[
    \pi = \set{\set{k, k + n/2}} \cup \set{\set{u, v} : u,v \in [n],\, u \neq v,\, u + v \equiv 2k\, (\operatorname{mod}\, n)},
\]
and
\[
    \overline\pi = \set{\set{u, v} : u,v \in [n],\, u + v \equiv 2k-1\, (\operatorname{mod}\, n)},
\]
where $k$ is the smallest element in $[n]$ such that $k$ is a turn-around point in its corresponding block $[k]_\pi$. Moreover, $\pi \in \U_{0,n/2}(n)$.
\end{lemma}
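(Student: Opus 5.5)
Since $\overline\pi$ is a pairing with $n/2$ blocks and $\overline G(\pi)$ is a cycle, $\overline G(\pi)$ has exactly $n/2$ vertices and $n/2$ edges, so $\#(\pi)=n/2$, and each vertex of $\overline G(\pi)$ has degree $2$. Because $n\ge 6$ the cycle has length $k=n/2\ge 3$, so there are no loops and no multiple edges. In $G(\pi)$ each vertex is therefore met by exactly four edge-ends, so in the Eulerian circuit $\gamma^{-1}$ every vertex is visited exactly twice. The plan is to read off the walk $[1]_\pi,[2]_\pi,\dots,[n]_\pi$ (which traverses the edges $e_1,\dots,e_n$) as a closed walk of length $n$ on an $n/2$-cycle that uses each undirected edge exactly twice. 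At each step the walk either continues in the same rotational direction around the cycle or reverses. A reversal at step $k$ means $[\gamma^{-1}(k)]_\pi=[\gamma(k)]_\pi\neq[k]_\pi$, that is, $k$ is a turn-around point.

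\textbf{Case 1: no turn-around point.} The walk never reverses, so it winds monotonically around the cycle. Its length is $n=2\cdot(n/2)$, so it goes around exactly twice. Hence $i_u=i_{u+n/2}$, and $i_u\neq i_v$ otherwise, because the first $n/2$ steps visit $n/2$ distinct vertices. This gives $\pi=\{\{u,u+n/2\}\}$. Edge $e_u$ and edge $e_{u+n/2}$ join the same pair of vertices in the same direction, and no other edge joins that pair. So $\overline\pi=\pi$, every pair on the cycle has the same orientation, and $\pi\in\U_{1,n/2}(n)$ by Case III of Definition~\ref{def:U_m_k}.

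\textbf{Case 2: some turn-around point.} Let $k$ be the smallest one. First show that the walk reverses exactly twice. Each undirected edge is used exactly twice and the walk is closed, so the net winding number is $w\in\{0,\pm1,\pm2\}$. If $w=\pm2$, every edge is traversed twice in the same direction and there are no reversals, contradicting the assumption. If $w=\pm1$, edge usage is uneven: some edge would be used an odd number of times or more than twice, which is impossible. So $w=0$, and each edge is crossed once in each direction. A walk on a cycle with winding $0$ that covers every edge exactly twice is a walk out and back: it goes all the way around one way, reverses, and retraces. More precisely, starting at turn-around $k$, it goes along the cycle in one direction through all $n/2$ edges (it cannot reverse earlier without leaving some edge unused or overused), reaches the antipodal step $k+n/2$, reverses there, and returns. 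Therefore $i_{k+j}=i_{k-j}$ for all $j$, with indices taken modulo $n$. This gives the blocks $\{u,v\}$ with $u+v\equiv 2k$. The two fixed points of this reflection, $k$ and $k+n/2$, are both turn-around points, and the walk visits their vertices once each as endpoints. Checking against $\#(\pi)=n/2$ forces these two to be identified into one block $\{k,k+n/2\}$; I would verify this count carefully, since the singleton-versus-pair bookkeeping is where it is easy to slip. The edge $e_u$ joins $i_u$ and $i_{u+1}$, and its reflection is the edge $e_v$ with $v+1\equiv 2k-u$, i.e.\ $u+v\equiv 2k-1$. Paired edges are traversed in opposite directions, so $\pi\in\U_{0,n/2}(n)$.

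\textbf{Main obstacle.} The main obstacle is the rigorous claim in Case 2 that a single reversal forces the full out-and-back structure with exactly two reversals. I would prove this by tracking the traversal count of each edge. The first reversal at $k$ means the two edges $e_{k-1}$ and $e_k$ coincide as an undirected edge, so that edge is now used up. Inductively, moving outward from $k$, edges $e_{k-1-j}$ and $e_{k+j}$ must coincide, since otherwise some vertex of degree $2$ in $\overline G(\pi)$ would be entered three times. This continues until all $n/2$ edges have been paired.
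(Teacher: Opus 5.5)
Your proof is correct and is essentially the paper's argument. The paper only asserts that following the Eulerian circuit $\gamma^{-1}$ around the cycle, with or without a turn-around point, forces $\pi$ and $\overline\pi$; your winding-number argument and the inductive matching of $e_{k-1-j}$ with $e_{k+j}$ supply exactly those omitted details. The identification you hesitated over is immediate: since every vertex is visited exactly twice, every block of $\pi$ has exactly two elements, so the two indices $k$ and $k+n/2$ left unmatched by the reflection must form a block (equivalently, the outward leg from $[k]_\pi$ uses all $n/2$ distinct edges of the cycle and therefore returns to $[k]_\pi$ at step $k+n/2$).
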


\begin{proof}
Because $\overline{G}(\pi)$ is a cycle, $\overline\pi$ is a pairing and $\gamma^{-1}$ is an Eulerian circuit on $G(\pi)$, the construction of $G(\pi)$ and the presence (or not) of a turn-around point force the partitions $\pi$ and $\overline\pi$ in each case.
\end{proof}

Let $\pi \in \PP(n)$ be a partition such that $\#(\pi) = \frac{n}{2}$. If $E(\pi, \bm j, \bm \epsilon) \neq 0$, we know by \eqref{eq:edge upper bound} that
\[
    \frac{n}{2} - 1 \leq \#(\overline\pi) \leq \frac{n}{2}.
\]
We treat the cases $\#(\overline\pi) = \frac{n}{2} - 1$ and $\#(\overline\pi) = \frac{n}{2}$ separately in the following lemmas.

\begin{notation}\label{notation:fourth infinitesimal cumulant}
In the next Lemma we shall evaluate $E(\pi, \bm j, \bm \epsilon)$ for the $\pi$'s that appear at the sub-leading level. The dependence of $E(\pi, \bm j, \bm \epsilon)$ on $\bm \epsilon$ will require some additional notation. Let 
\begin{center}\renewcommand{\arraystretch}{1.2} 
\begin{tabular}{rcl}
$\alpha$ & = & $\esp ( W_{12} \overline{W_{12}} W_{12} \overline{W_{12}}) $, \\
$\beta$ & = & $\frac{1}{2} \esp ( W_{12} W_{12} W_{12} W_{12} + 
\overline{W_{12}}\overline{W_{12}} 
\overline{W_{12}}\overline{W_{12}}) $, \\
$\rho$ & = & $\frac{1}{2}\esp ( W_{12}  W_{12} W_{12}\overline{W_{12}} +
W_{12}\overline{W_{12}} \overline{W_{12}} \overline{W_{12}} ) $. 
\end{tabular}
\end{center}
In addition we need the following subsets of $\cP(4)$. 
\begin{align*}
A = \ &  \big\{\, 
\begin{tikzpicture}[x=0.65cm,y=0.65cm,baseline=0.15cm,line cap=round,line join=round]
  \tikzset{partline/.style={line width=0.45pt}}
  \coordinate (p1) at (0,0);
  \coordinate (p2) at (1,0);
  \coordinate (p3) at (2,0);
  \coordinate (p4) at (3,0);
  \draw[partline] (0,0.75) -- (0,0.18);
  \draw[partline] (1,0.75) -- (1,0.18);
  \draw[partline] (2,0.75) -- (2,0.18);
  \draw[partline] (3,0.75) -- (3,0.18);
  \draw[partline] (0,0.18) -- (3,0.18);
\end{tikzpicture}\, , \ 
\begin{tikzpicture}[x=0.65cm,y=0.65cm,baseline=0.15cm,line cap=round,line join=round]
  \tikzset{partline/.style={line width=0.45pt}}
  \coordinate (p1) at (0,0);
  \coordinate (p2) at (1,0);
  \coordinate (p3) at (2,0);
  \coordinate (p4) at (3,0);
  \draw[partline] (0,0.75) -- (0,0.18);
  \draw[partline] (1,0.75) -- (1,0.18);
  \draw[partline] (0,0.18) -- (1,0.18);
  \draw[partline] (2,0.75) -- (2,0.18);
  \draw[partline] (3,0.75) -- (3,0.18);
  \draw[partline] (2,0.18) -- (3,0.18);
\end{tikzpicture}\, , \
\begin{tikzpicture}[x=0.65cm,y=0.65cm,baseline=0.15cm,line cap=round,line join=round]
  \tikzset{partline/.style={line width=0.45pt}}
  \coordinate (p1) at (0,0);
  \coordinate (p2) at (1,0);
  \coordinate (p3) at (2,0);
  \coordinate (p4) at (3,0);
  \draw[partline] (0,0.75) -- (0,0.18);
  \draw[partline] (3,0.75) -- (3,0.18);
  \draw[partline] (0,0.18) -- (3,0.18);
  \draw[partline] (1,1.03) -- (1,0.46);
  \draw[partline] (2,1.03) -- (2,0.46);
  \draw[partline] (1,0.46) -- (2,0.46);
\end{tikzpicture}  \,\big\} \\
B = \ &  \big\{\,
\begin{tikzpicture}[x=0.65cm,y=0.65cm,baseline=0.3cm,line cap=round,line join=round]
  \tikzset{partline/.style={line width=0.45pt}}
  \coordinate (p1) at (0,0);
  \coordinate (p2) at (1,0);
  \coordinate (p3) at (2,0);
  \coordinate (p4) at (3,0);
  \draw[partline] (0,1.03) -- (0,0.46);
  \draw[partline] (2,1.03) -- (2,0.46);
  \draw[partline] (0,0.46) -- (2,0.46);
  \draw[partline] (1,0.75) -- (1,0.18);
  \draw[partline] (3,0.75) -- (3,0.18);
  \draw[partline] (1,0.18) -- (3,0.18);
\end{tikzpicture}
\,\big\} \\
C = \  &  \big\{\, \begin{tikzpicture}[x=0.65cm,y=0.65cm,baseline=0.15cm,line cap=round,line join=round]
  \tikzset{partline/.style={line width=0.45pt}}
  \coordinate (p1) at (0,0);
  \coordinate (p2) at (1,0);
  \coordinate (p3) at (2,0);
  \coordinate (p4) at (3,0);
  \draw[partline] (0,0.75) -- (0,0.25);
  \draw[partline] (1,0.75) -- (1,0.18);
  \draw[partline] (2,0.75) -- (2,0.18);
  \draw[partline] (3,0.75) -- (3,0.18);
  \draw[partline] (1,0.18) -- (3,0.18);
\end{tikzpicture} \ , \ 
\begin{tikzpicture}[x=0.65cm,y=0.65cm,baseline=0.15cm,line cap=round,line join=round]
  \tikzset{partline/.style={line width=0.45pt}}
  \coordinate (p1) at (0,0);
  \coordinate (p2) at (1,0);
  \coordinate (p3) at (2,0);
  \coordinate (p4) at (3,0);
  \draw[partline] (0,0.75) -- (0,0.18);
  \draw[partline] (2,0.75) -- (2,0.18);
  \draw[partline] (3,0.75) -- (3,0.18);
  \draw[partline] (0,0.18) -- (3,0.18);
  \draw[partline] (1,0.75) -- (1,0.35);
\end{tikzpicture}\, , \ 
\begin{tikzpicture}[x=0.65cm,y=0.65cm,baseline=0.15cm,line cap=round,line join=round]
  \tikzset{partline/.style={line width=0.45pt}}
  \coordinate (p1) at (0,0);
  \coordinate (p2) at (1,0);
  \coordinate (p3) at (2,0);
  \coordinate (p4) at (3,0);
  \draw[partline] (0,0.75) -- (0,0.18);
  \draw[partline] (1,0.75) -- (1,0.18);
  \draw[partline] (3,0.75) -- (3,0.18);
  \draw[partline] (0,0.18) -- (3,0.18);
  \draw[partline] (2,0.75) -- (2,0.35);
\end{tikzpicture}\ , \ 
\begin{tikzpicture}[x=0.65cm,y=0.65cm,baseline=0.15cm,line cap=round,line join=round]
  \tikzset{partline/.style={line width=0.45pt}}
  \coordinate (p1) at (0,0);
  \coordinate (p2) at (1,0);
  \coordinate (p3) at (2,0);
  \coordinate (p4) at (3,0);
  \draw[partline] (0,0.75) -- (0,0.18);
  \draw[partline] (1,0.75) -- (1,0.18);
  \draw[partline] (2,0.75) -- (2,0.18);
  \draw[partline] (0,0.18) -- (2,0.18);
  \draw[partline] (3,0.75) -- (3,0.25);
\end{tikzpicture} \  
\,\big\},
\end{align*}
\[
A' = \   \big\{\, 
\begin{tikzpicture}[x=0.65cm,y=0.65cm,baseline=0.15cm,line cap=round,line join=round]
  \tikzset{partline/.style={line width=0.45pt}}
  \coordinate (p1) at (0,0);
  \coordinate (p2) at (1,0);
  \coordinate (p3) at (2,0);
  \coordinate (p4) at (3,0);
  \draw[partline] (0,0.75) -- (0,0.18);
  \draw[partline] (1,0.75) -- (1,0.18);
  \draw[partline] (2,0.75) -- (2,0.18);
  \draw[partline] (3,0.75) -- (3,0.18);
  \draw[partline] (0,0.18) -- (3,0.18);
\end{tikzpicture}\, , \ 
\begin{tikzpicture}[x=0.65cm,y=0.65cm,baseline=0.15cm,line cap=round,line join=round]
  \tikzset{partline/.style={line width=0.45pt}}
  \coordinate (p1) at (0,0);
  \coordinate (p2) at (1,0);
  \coordinate (p3) at (2,0);
  \coordinate (p4) at (3,0);
  \draw[partline] (0,0.75) -- (0,0.18);
  \draw[partline] (1,0.75) -- (1,0.18);
  \draw[partline] (0,0.18) -- (1,0.18);
  \draw[partline] (2,0.75) -- (2,0.18);
  \draw[partline] (3,0.75) -- (3,0.18);
  \draw[partline] (2,0.18) -- (3,0.18);
\end{tikzpicture}\  \,\big\},
\]
\[
A'' = \ \big\{\, 
\begin{tikzpicture}[x=0.65cm,y=0.65cm,baseline=0.15cm,line cap=round,line join=round]
  \tikzset{partline/.style={line width=0.45pt}}
  \coordinate (p1) at (0,0);
  \coordinate (p2) at (1,0);
  \coordinate (p3) at (2,0);
  \coordinate (p4) at (3,0);
  \draw[partline] (0,0.75) -- (0,0.18);
  \draw[partline] (1,0.75) -- (1,0.18);
  \draw[partline] (2,0.75) -- (2,0.18);
  \draw[partline] (3,0.75) -- (3,0.18);
  \draw[partline] (0,0.18) -- (3,0.18);
\end{tikzpicture}\, , \ 
\begin{tikzpicture}[x=0.65cm,y=0.65cm,baseline=0.15cm,line cap=round,line join=round]
  \tikzset{partline/.style={line width=0.45pt}}
  \coordinate (p1) at (0,0);
  \coordinate (p2) at (1,0);
  \coordinate (p3) at (2,0);
  \coordinate (p4) at (3,0);
  \draw[partline] (0,0.75) -- (0,0.18);
  \draw[partline] (3,0.75) -- (3,0.18);
  \draw[partline] (0,0.18) -- (3,0.18);
  \draw[partline] (1,1.03) -- (1,0.46);
  \draw[partline] (2,1.03) -- (2,0.46);
  \draw[partline] (1,0.46) -- (2,0.46);
\end{tikzpicture}\  \,\big\},
\]
\[
A''' = \ \big\{\, 
\begin{tikzpicture}[x=0.65cm,y=0.65cm,baseline=0.15cm,line cap=round,line join=round]
  \tikzset{partline/.style={line width=0.45pt}}
  \coordinate (p1) at (0,0);
  \coordinate (p2) at (1,0);
  \coordinate (p3) at (2,0);
  \coordinate (p4) at (3,0);
  \draw[partline] (0,0.75) -- (0,0.18);
  \draw[partline] (1,0.75) -- (1,0.18);
  \draw[partline] (0,0.18) -- (1,0.18);
  \draw[partline] (2,0.75) -- (2,0.18);
  \draw[partline] (3,0.75) -- (3,0.18);
  \draw[partline] (2,0.18) -- (3,0.18);
\end{tikzpicture}\, , \    
\begin{tikzpicture}[x=0.65cm,y=0.65cm,baseline=0.15cm,line cap=round,line join=round]
  \tikzset{partline/.style={line width=0.45pt}}
  \coordinate (p1) at (0,0);
  \coordinate (p2) at (1,0);
  \coordinate (p3) at (2,0);
  \coordinate (p4) at (3,0);
  \draw[partline] (0,0.75) -- (0,0.18);
  \draw[partline] (3,0.75) -- (3,0.18);
  \draw[partline] (0,0.18) -- (3,0.18);
  \draw[partline] (1,1.03) -- (1,0.46);
  \draw[partline] (2,1.03) -- (2,0.46);
  \draw[partline] (1,0.46) -- (2,0.46);
\end{tikzpicture}\  \,\big\},
\]
\[
B' = \ \big\{\, 
\begin{tikzpicture}[x=0.65cm,y=0.65cm,baseline=0.3cm,line cap=round,line join=round]
  \tikzset{partline/.style={line width=0.45pt}}
  \coordinate (p1) at (0,0);
  \coordinate (p2) at (1,0);
  \coordinate (p3) at (2,0);
  \coordinate (p4) at (3,0);
  \draw[partline] (0,1.03) -- (0,0.46);
  \draw[partline] (2,1.03) -- (2,0.46);
  \draw[partline] (0,0.46) -- (2,0.46);
  \draw[partline] (1,0.75) -- (1,0.18);
  \draw[partline] (3,0.75) -- (3,0.18);
  \draw[partline] (1,0.18) -- (3,0.18);
\end{tikzpicture}\, , \ 
\begin{tikzpicture}[x=0.65cm,y=0.65cm,baseline=0.15cm,line cap=round,line join=round]
  \tikzset{partline/.style={line width=0.45pt}}
  \coordinate (p1) at (0,0);
  \coordinate (p2) at (1,0);
  \coordinate (p3) at (2,0);
  \coordinate (p4) at (3,0);
  \draw[partline] (0,0.75) -- (0,0.18);
  \draw[partline] (3,0.75) -- (3,0.18);
  \draw[partline] (0,0.18) -- (3,0.18);
  \draw[partline] (1,1.03) -- (1,0.46);
  \draw[partline] (2,1.03) -- (2,0.46);
  \draw[partline] (1,0.46) -- (2,0.46);
\end{tikzpicture}\  \,\big\}.
\]
\[
B'' = \   \big\{\, 
\begin{tikzpicture}[x=0.65cm,y=0.65cm,baseline=0.3cm,line cap=round,line join=round]
  \tikzset{partline/.style={line width=0.45pt}}
  \coordinate (p1) at (0,0);
  \coordinate (p2) at (1,0);
  \coordinate (p3) at (2,0);
  \coordinate (p4) at (3,0);
  \draw[partline] (0,1.03) -- (0,0.46);
  \draw[partline] (2,1.03) -- (2,0.46);
  \draw[partline] (0,0.46) -- (2,0.46);
  \draw[partline] (1,0.75) -- (1,0.18);
  \draw[partline] (3,0.75) -- (3,0.18);
  \draw[partline] (1,0.18) -- (3,0.18);
\end{tikzpicture}\, , \ 
\begin{tikzpicture}[x=0.65cm,y=0.65cm,baseline=0.15cm,line cap=round,line join=round]
  \tikzset{partline/.style={line width=0.45pt}}
  \coordinate (p1) at (0,0);
  \coordinate (p2) at (1,0);
  \coordinate (p3) at (2,0);
  \coordinate (p4) at (3,0);
  \draw[partline] (0,0.75) -- (0,0.18);
  \draw[partline] (1,0.75) -- (1,0.18);
  \draw[partline] (0,0.18) -- (1,0.18);
  \draw[partline] (2,0.75) -- (2,0.18);
  \draw[partline] (3,0.75) -- (3,0.18);
  \draw[partline] (2,0.18) -- (3,0.18);
\end{tikzpicture}\  \,\big\},
\]
\[
B''' = \   \big\{\, 
\begin{tikzpicture}[x=0.65cm,y=0.65cm,baseline=0.3cm,line cap=round,line join=round]
  \tikzset{partline/.style={line width=0.45pt}}
  \coordinate (p1) at (0,0);
  \coordinate (p2) at (1,0);
  \coordinate (p3) at (2,0);
  \coordinate (p4) at (3,0);
  \draw[partline] (0,1.03) -- (0,0.46);
  \draw[partline] (2,1.03) -- (2,0.46);
  \draw[partline] (0,0.46) -- (2,0.46);
  \draw[partline] (1,0.75) -- (1,0.18);
  \draw[partline] (3,0.75) -- (3,0.18);
  \draw[partline] (1,0.18) -- (3,0.18);
\end{tikzpicture}\, , \
\begin{tikzpicture}[x=0.65cm,y=0.65cm,baseline=0.15cm,line cap=round,line join=round]
  \tikzset{partline/.style={line width=0.45pt}}
  \coordinate (p1) at (0,0);
  \coordinate (p2) at (1,0);
  \coordinate (p3) at (2,0);
  \coordinate (p4) at (3,0);
  \draw[partline] (0,0.75) -- (0,0.18);
  \draw[partline] (1,0.75) -- (1,0.18);
  \draw[partline] (2,0.75) -- (2,0.18);
  \draw[partline] (3,0.75) -- (3,0.18);
  \draw[partline] (0,0.18) -- (3,0.18);
\end{tikzpicture}
\  \,\big\}.
\]
\end{notation}

\begin{lemma}\label{lemma:2-4 trees}
Let $n \geq 4$ and let $\pi \in \PP(n)$ be a partition with $\#(\pi) = \frac{n}{2}$ and $\#(\overline\pi) = \frac{n}{2} - 1$. If $E(\pi, \bm j, \bm \epsilon) \neq 0$, then
$\pi \in \U_{0,2}(n)$ with the unique block of size four $B \in \overline{\pi}$ satisfying $\overline\pi \setminus \{B\} \leq \ker(\bm j_1)$ and $\ker(\bm j_2)$ is either a pairing or the partition $1_4$, where $\bm j_1 = \bm j|_{[n] \setminus B}$, $\bm j_2 = \bm j|_B$. Moreover, $\overline{\pi} \in \NC(n)$.

Let $\bm \epsilon_2 = \bm \epsilon|_B$ and let $p$ be the number of pairs $(u, v) \in \overline \pi$, such that $\epsilon_u = \epsilon_v$ and $q$ be the number of pairs for which $\epsilon_u = - \epsilon_v$. 

\begin{enumerate}
\item
If $\ker(\bm j_2) = 1_4$, then
\[
E(\pi, \bm j, \bm \epsilon) =
\sigma^{2p} r^{2q} \times
\begin{cases}
\alpha & \ker(\bm \epsilon_2) \in A \\
\beta  & \ker(\bm \epsilon_2) \in B \\
\rho   & \ker(\bm \epsilon_2) \in C 
\end{cases}.
\]
\item
If $\ker(\bm j_2) = \big\{ \{1, 2\}, \{3, 4\} \big\}$, then
\[
E(\pi, \bm j, \bm \epsilon) = 
\sigma^{2p} r^{2q} \times
\begin{cases}
\sigma^4 & \ker(\bm \epsilon_2) \in A' \\
r^4  & \ker(\bm \epsilon_2) \in B' \\
\sigma^2 r^2   & \ker(\bm \epsilon_2) \in C 
\end{cases}.
\]
\item
If $\ker(\bm j_2) = \{\{1, 4\}, \{2, 3\}\}$, then
\[
E(\pi, \bm j, \bm \epsilon) =
\sigma^{2p} r^{2q} \times
\begin{cases}
\sigma^4 & \ker(\bm \epsilon_2) \in A'' \\
r^4  & \ker(\bm \epsilon_2) \in B'' \\
\sigma^2 r^2   & \ker(\bm \epsilon_2) \in C 
\end{cases}.
\]

\item
If $\ker(\bm j_2) = \{\{1, 3\}, \{2, 4\}\}$, then
\[
E(\pi, \bm j, \bm \epsilon) =
\sigma^{2p} r^{2q} \times
\begin{cases}
\sigma^4 & \ker(\bm \epsilon_2) \in A''' \\
r^4  & \ker(\bm \epsilon_2) \in B''' \\
\sigma^2 r^2   & \ker(\bm \epsilon_2) \in C 
\end{cases}.
\]

\end{enumerate}
\end{lemma}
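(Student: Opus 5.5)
First I will pin down the combinatorial shape of $\pi$. Since $E(\pi,\bm j,\bm\epsilon)\neq 0$, Lemma~\ref{lemma:edge upper bound} says every block of $\overline\pi$ has size at least two. Now $\overline\pi$ has $n/2-1$ blocks and $\sum_{V\in\overline\pi}|V| = n$, so the sizes are either all $2$ except one block of size $4$, or all $2$ except two blocks of size $3$. Also $\overline G(\pi)$ is connected with $\#(\pi)=n/2$ vertices and $n/2-1$ edges, so it is a tree. In a tree, an Eulerian circuit such as $\gamma^{-1}$ crosses each undirected edge equally often in each direction. So every block of $\overline\pi$ has equally many edges of each orientation, and this rules out blocks of size $3$. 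Hence $\overline\pi$ is a pairing plus one block $B$ of size four, with each pair oppositely oriented and $B$ containing two edges of each orientation. This is exactly Case~II of Definition~\ref{def:U_m_k}, so $\pi\in\U_{0,2}(n)$.

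Next I will prove $\overline\pi\in\NC(n)$ by induction on $n$, pruning leaves as in Lemma~\ref{lemma:double-trees}.
\begin{itemize}
\item If $\overline G(\pi)$ has a leaf whose edge is a pair, that leaf is a singleton $\{k\}$ with $\{\gamma^{-1}(k),k\}\in\overline\pi$. Lemma~\ref{lemma:pruning} then reduces to $\pi|_{[n]\setminus\{\gamma^{-1}(k),k\}}$, and Remark~\ref{rmk:pruning} factors $E$ as the product of this smaller term and a two-element term $\sigma^2$ or $r^2$, each nonzero.
\item A tree with at least two edges has at least two leaves, so such a leaf exists unless the tree is a single edge carrying $B$. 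That base case is $n=4$, $\pi=\{\{1,3\},\{2\},\{4\}\}$ up to rotation, which I will treat by hand.
\end{itemize}
The same induction tracks the constraints. Pruned pairs must satisfy $\overline\pi\setminus\{B\}\le\ker(\bm j_1)$, since a pair of independent centred entries has expectation $0$. On $B$, independence forces $\ker(\bm j_2)$ to be $1_4$ or a pairing, because a singleton block of $\ker(\bm j_2)$ would contribute a lone centred factor.

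Finally I will compute the factor coming from $B$. By \eqref{eq:E_pi_j_eps_factorization} and Remark~\ref{rmk:block_size_two}, $E$ equals $\sigma^{2p}r^{2q}$ from the pairs times the $B$-factor. The $B$-factor is the real part of $\esp[W^{(j_{l_1},\epsilon_{l_1})}_{i i'}\cdots]$ taken over the four edges of $B$ read in cyclic order. The Eulerian circuit on the tree forces a fixed pattern of orientations on those four edges (up to rotation), so each entry appears as $W_{12}$ or $\overline{W_{12}}$ according to its orientation and transpose sign $\epsilon$. Three cases follow.
\begin{itemize}
\item If $\ker(\bm j_2)=1_4$, the four factors come from one matrix. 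Counting the conjugates gives $\alpha$ when there are two conjugates in the alternating pattern, $\beta$ for zero or four conjugates, and $\rho$ for one or three, after taking the real part.
\item If $\ker(\bm j_2)$ is a pairing, the expectation factors into two second moments, each equal to $\sigma^2$ or $r^2$.
\item In every case, reading $\ker(\bm\epsilon_2)$ against the orientation pattern identifies the sets $A,B,C$, $A',B'$, and so on.
\end{itemize}

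I expect this last bookkeeping to be the main obstacle. It means determining the orientation pattern of $B$ from the tree structure (I would reduce to the base case $n=4$ via pruning) and matching each $\ker(\bm\epsilon_2)$ with its moment. The tree/orientation argument and the pruning induction are routine.
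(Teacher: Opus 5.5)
Your proposal is correct and follows essentially the paper's route. $\overline G(\pi)$ is a tree, and the Eulerian circuit $\gamma^{-1}$ then forces a single block $B$ of size four with alternating orientations and oppositely oriented pairs. Leaf-pruning as in Lemma~\ref{lemma:double-trees} gives $\overline\pi\in\NC(n)$, and the $B$-factor is read off case by case; your conjugate-counting rule is just a tidy organization of the paper's check of the sixteen choices of $\bm\epsilon_2$.

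One slip: the $n=4$ base case is $\pi=\{\{1,3\},\{2,4\}\}$, not $\{\{1,3\},\{2\},\{4\}\}$, which has three blocks and is a double tree. This is harmless, since there $\overline\pi=1_4$, and your alternating-crossing argument already fixes the orientation pattern of $B$ for every $n$.
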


\begin{proof}
If $n = 4$, the only partition $\pi \in \PP(n)$ with $\#(\pi) = \frac{n}{2}$ and $\#(\overline{\pi}) = \frac{n}{2} - 1$ is $\pi = \set{\set{1,3}, \set{2,4}}$. Therefore, the graph $G(\pi)$ is composed of two vertices joined by two edges in one orientation (the edges $e_1$, and $e_3$) and two edges in the opposite orientation (the edges $e_2$, and $e_4$), see Figure \ref{fig:block of size four}. Hence, the graph $G(\pi)$ is a $(0,2)$-unicyclic graph. Moreover $\overline{\pi} = 1_4 \in \NC(n)$. Now let us consider the values of $E(\pi, \bm j, \bm \epsilon)$.

There are sixteen possible $\epsilon$'s. Suppose $i_1 < i_2$. Then, after considering all 16 cases, there are only $3$ possible values of $E(\pi, \bm j,\ab \bm \epsilon)$, which only depend on $\ker(\bm \epsilon)$. They are $\alpha$, $\beta$ or $\rho$ and occur exactly when $\ker(\epsilon)$ is in any of $A$, $B$ or $C$. The same calculation also shows that when $\ker(\bm j_2)$ is a pairing the only possible values of $E(\pi, \bm j, \bm \epsilon)$ are $\sigma^4$, $r^4$ or $\sigma^2 r^2$. Which one we get depends both on $\ker(\bm j)$ and $\ker(\bm \epsilon)$, as listed in the statement of the Lemma. 
In this case, $\overline \pi$ has only one block (the block $B$), so $ p = q = 0$.                          

\begin{figure}[t]
\begin{center}\includegraphics[width=15em]{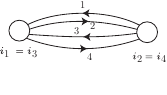}\end{center}
\caption{\small\raggedright\label{fig:block of size four} When $ n = 4$ the only possible $\pi$ is $\{\{1, 3\}, \{2, 4\}\}$.}
\end{figure}

Suppose that  $n \geq 6$. Let $\pi \in \PP(n)$ be a partition with $\#(\pi) = \frac{n}{2} - 1$ and $\#(\overline\pi) = \frac{n}{2}$. Then, by Equation \eqref{eq:edge upper bound}, $\overline{G}(\pi)$ is a tree. Moreover, since $G(\pi)$ admits an Eulerian circuit (given by $\gamma^{-1}$), all the vertices of $G(\pi)$ have even total degree.\footnote{If a directed graph has an Eulerian circuit, then the number of incoming edges equals number of outgoing edges for every vertex.} Therefore, 
$\overline \pi$ must have a block $B$ of size $4$, and all others of size $2$. Following the proof of Lemma~\ref{lemma:double-trees}, we find in the same way that the paired edges of $\overline{\pi}$ have opposite orientations and the block of size four contains two edges in one orientation and two edges in the opposite orientation (as in the base case $n = 4$). Moreover, it follows that $\overline\pi \in \NC(n)$.
Since $E(\pi, \bm j, \bm \epsilon) \not = 0$, we must have $\overline\pi \setminus \{ B \} \leq \ker(\bm j_1)$. Thus, the contribution of each block $(u, v)$ of $\overline \pi$, not equal to $B$, is either $\sigma^2$ or $r^2$ depending on whether $\epsilon_u = \epsilon_v$ or $\epsilon_u = -\epsilon_v$ (exactly as in Lemma~\ref{lemma:double-trees}). The contribution of the block $B$ is exactly as in the case when $n = 4$. 
\end{proof}

\begin{remark}
Suppose that 
$\pi \in \U_{0,2}(n)$. If $\overline\pi \setminus \{B\} \leq \ker(\bm j_1)$ and $\ker(\bm j_2)$ is either a pairing or the partition $1_4$, then $E(\pi, \bm j, \bm \epsilon)$ is as in the cases $(i)$, $(ii)$, $(iii)$ and $(iv)$ above. 
Each block of size $2$ of $\overline \pi$ contributes to $E(\pi, \bm j, \bm \epsilon)$ either $\sigma^2$ or $r^2$ and the contribution from $B$ is any of $\alpha$, $\beta$, $\rho$, $\sigma^4$, $r^2$ or $\sigma^2 r^2$. 
However, we might have $\beta = 0$, $\rho = 0$ or $r^2 = 0$ and then $E(\pi, \bm j, \bm \epsilon)$ can be $0$.
\end{remark}

\begin{lemma}\label{lemma:m-k unicycle}
Let $n \geq 2$ and let $\pi \in \PP(n)$ be a partition with $\#(\pi) = \#(\overline\pi) =  n/2$ and $E(\pi, \bm j, \bm \epsilon) \neq 0$.

\smallskip\noindent
\textit{Case} $(a)$: 
Suppose $r^2 \not = 0$. Then  $\pi \in \U_{m,k}(n)$, for some $m \in \set{0,1}$ and some $k \neq 2$ with $\overline\pi \leq \ker(\bm j)$.

\smallskip\noindent
\textit{Case} $(b)$:  Suppose $r^2 = 0$. Then either:
\begin{itemize}
    \item $\pi \in \U_{0,1}(n)$, $\overline\pi \leq \ker(\bm j)$ and $\overline\pi|_{[n] \setminus V} \leq \ker(\epsilon)|_{[n] \setminus V}$, where $V \in \overline\pi$ is the corresponding block of loops in $G(\pi)$; or
    \item $\pi \in \U_{0,k}(n)$, for some $k \geq 3$ and $\overline\pi \leq \ker(\bm j) \wedge \ker(\bm \epsilon)$; or
    \item $\pi \in \U_{1,k}(n)$, for some $k \geq 3$, $\overline\pi \leq \ker(\bm j)$ and, for every $\set{u,v} \in \overline\pi$,
    \[
        \begin{cases}
            \epsilon_u = \epsilon_v & \text{if } u,v \text{ have the opposite orientation},\\
            \epsilon_u = -\epsilon_v & \text{if } u,v \text{ have the same orientation}.
        \end{cases}
    \]
\end{itemize}
Under either case $(a)$ or $(b)$:
\[
    E(\pi, \bm j, \bm \epsilon) = \begin{cases}
        \tilde f(\pi, \bm \epsilon) & \text{if } m = 0,\, k = 1,\\ 
        f(\pi, \bm \epsilon) & \text{if } m = 0,\, k \geq 3,\\
        g(\pi, \bm \epsilon) & \text{if } m = 1,\, k \geq 3.
    \end{cases}
\]

Moreover, $\overline\pi \in \NC_2(n)$ if $m = 0$, and $\overline{\pi} \notin \NC(n)$ if $m = 1$.    
\end{lemma}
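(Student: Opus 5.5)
Since $\#(\pi) = \#(\overline\pi) = n/2$ and $E(\pi,\bm j,\bm\epsilon)\neq 0$, Lemma~\ref{lemma:edge upper bound} makes every block of $\overline\pi$ of size at least two. With $n/2$ blocks on $n$ labels, $\overline\pi$ is therefore a pairing. The graph $\overline G(\pi)$ is connected with $n/2$ vertices and $n/2$ edges, so it is unicyclic; let $k\geq 1$ be the length of its unique cycle, where $k=1$ means a loop and $k=2$ is impossible because $\overline G(\pi)$ is a simple graph after identifying edges with the same endpoints. The proof is an induction on $n$ by pruning double-leaves, as in Lemma~\ref{lemma:double-trees}.

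\emph{Pruning step.} If $\overline G(\pi)$ is not a bare cycle (or a single loop), it has a leaf $V$ off the cycle. Exactly as in the proof of Lemma~\ref{lemma:double-trees}, a leaf carries exactly two edges of $G(\pi)$, since $\overline\pi$ is a pairing. The Eulerian circuit $\gamma^{-1}$ then forces $V=\{k\}$ to be a singleton with $\{\gamma^{-1}(k),k\}\in\overline\pi$, so $\{k\}$ is a double-leaf. Lemma~\ref{lemma:pruning} and Remark~\ref{rmk:pruning} give
\[
E(\pi,\bm j,\bm\epsilon)=E(\pi_1,\bm j_1,\bm\epsilon_1)E(\pi_2,\bm j_2,\bm\epsilon_2),
\]
where $\pi_2=\{\{\gamma^{-1}(k)\},\{k\}\}$ contributes $\sigma^2$ or $r^2$ by \eqref{eq:e equals two}; this forces $j_{\gamma^{-1}(k)}=j_k$, and when $r^2=0$ also $\epsilon_{\gamma^{-1}(k)}=\epsilon_k$. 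The restriction $\pi_1$ again satisfies $\#(\pi_1)=\#(\overline{\pi_1})=(n-2)/2$ and has the same cycle. Lemma~\ref{lemma:pruning} shows that non-crossingness of $\overline\pi$ is equivalent to that of $\overline{\pi_1}$, so the property $\overline\pi\in\NC_2$ or $\overline\pi\notin\NC$ is inherited in both directions. The class $\U_{m,k}$ is preserved by re-attaching a double-tree leaf, and the factor counts in $f$, $\tilde f$, $g$ add up correctly. Hence everything reduces to the base configurations.

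\emph{Base cases.} For $k=1$ the base is $n=2$, $\pi=1_2$: $\overline G(\pi)$ is a single loop at one vertex, $E=\esp[W_{11}^{(j_1)}W_{11}^{(j_2)}]$ is nonzero only if $j_1=j_2$, and then equals $s^2$ independently of $\epsilon$ because diagonal entries are unaffected by transposition. After pruning, this gives $\pi\in\U_{0,1}(n)$ with $E=\tilde f$, where the loop pair is excluded from $p,q$ and imposes no $\epsilon$ constraint even when $r^2=0$; this matches the first bullet of case (b). The loop pair is the adjacent pair $\{1,2\}$, hence non-crossing, so $\overline\pi\in\NC_2$. For $k\ge3$ the base is $\overline G(\pi)$ a bare cycle with $n=2k$, so $n\geq 6$ and Lemma~\ref{lemma:double-cycle} applies directly:
\begin{itemize}
\item Without a turn-around point, $\overline\pi=\{\{u,u+n/2\}\}$, each pair of edges has the same orientation, $\pi\in\U_{1,k}$, and $\overline\pi$ is crossing because $k\geq 2$.
\item With a turn-around point, $\overline\pi=\{\{u,v\}: u+v\equiv 2k-1\}$ is a non-crossing ``rainbow'' pairing with opposite orientations, so $\pi\in\U_{0,k}$.
\end{itemize}

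\emph{Evaluating $E$.} Each edge pair $\{u,v\}$ with $i_u\neq i_{\gamma(u)}$ contributes $\esp[W^{(j_u,\epsilon_u)}_{ab}W^{(j_v,\epsilon_v)}_{\cdot\cdot}]$. This requires $j_u=j_v$, which gives $\overline\pi\le\ker(\bm j)$. The value is $\sigma^2$ when the two factors are complex conjugates of each other and $r^2$ otherwise. For opposite orientations the factors are conjugate iff $\epsilon_u=\epsilon_v$; for the same orientation they are conjugate iff $\epsilon_u=-\epsilon_v$, since transposition reverses orientation. This yields $f$ for $m=0$ and $g$ for $m=1$. Setting $r^2=0$ kills exactly the non-conjugate pairs, which gives the constraints listed in case (b). In case (a) no $\epsilon$ constraint appears because $r^2\neq0$.

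\emph{Main obstacle.} The delicate step is checking that the pruning recursion really terminates in the bare-cycle or loop configuration while preserving the global orientation type $m$. The cycle edges are never pruned, so $m$ is determined by the base case. Off-cycle leaves always exist unless $\overline G(\pi)$ is exactly the cycle, because every vertex of a unicyclic graph lies on the cycle or in a tree hanging from it. The remaining care point is re-indexing via the order-preserving identification of $S$ with $[n]$ when invoking Lemma~\ref{lemma:double-cycle} on subsets.
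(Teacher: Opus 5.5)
Your proposal is correct and follows essentially the same route as the paper: you show that $\overline\pi$ is a pairing and $\overline G(\pi)$ is unicyclic, prune double-leaves inductively via Lemma~\ref{lemma:pruning} and Remark~\ref{rmk:pruning}, and reduce to either the single loop at $n=2$ or the bare cycle handled by Lemma~\ref{lemma:double-cycle}. Your pair-by-pair rule (a pair contributes $\sigma^2$ exactly when its two entries are complex conjugates, with transposition reversing orientation) covers cases $(a)$ and $(b)$ at once, and it states the $\bm\epsilon$-constraints more cleanly than the paper's separate case-$(b)$ discussion.
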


\begin{proof}
We proceed by induction on $n$, but first we deal with the case when $\overline{G}(\pi)$ has no leaves. If $n = 2$, the only partition $\pi \in \PP(n)$ such that $\#(\pi) = \#(\overline\pi) = \frac{n}{2}$ is $\pi = \set{\{1,2\}}$. Therefore, the graph $G(\pi)$ consists of a single vertex and two loops. Then, $\overline{G}(\pi)$ is a $(0,1)$-unicyclic graph and $\overline{\pi} = 1_2 \in NC_2(n)$. Since we are on the diagonal, the value of $\bm\epsilon$ does not matter and
\[
    E(\pi, \bm j, \bm \epsilon) = \begin{cases}
        s^2 & \text{if } \ker(\bm j) = 1_2,\\
        0 &   \text{if } \ker(\bm j) = 0_2.
    \end{cases}
\]
Hence, $E(\pi, \bm j, \bm \epsilon) \neq 0$ if and only if $\overline\pi \leq \ker(\bm j)$.  This shows that the conclusion holds in either Case $(a)$ or $(b)$. Next suppose $n \geq 4$. 

\medskip\noindent
\textit{Case} $(a)$: Suppose $E(\pi, \bm j, \bm \epsilon) \neq 0$ and $r^2 \neq 0$.

Since $\#(\pi) = \#(\overline\pi)$ we have that the number of vertices and the number of edges in $\overline{G}(\pi)$ are equal. Thus $\overline{G}(\pi)$ has exactly one cycle (in the sense of homology). By the construction of $\overline{G}(\pi)$, the length of the cycle cannot be $2$. We have already dealt with the case of a cycle of length $1$. So the length of the cycle must be at least $3$ and $n \geq 6$. By Lemma \ref{lemma:edge upper bound}, $\overline\pi$ is a pairing. 

If $\overline{G}(\pi)$ has no leaves, then $\overline{G}(\pi)$ is a cycle of length $n/2$. Because of Lemma \ref{lemma:double-cycle}, if $\pi$ does not have a turn-around point, then $\pi \in \U_{1,n/2}(n)$ with $\overline{\pi} \notin \NC(n)$. As $E(\pi, \bm j, \bm \epsilon) \not = 0$, we must have $\overline\pi \leq \ker(\bm j)$ and for every $(u, v)\in \overline\pi$, $u$ and $v$ have the same orientation. Thus $E(\pi, \bm j, \bm \epsilon)$ is $r^{2p}\sigma^{2q}$ where $p$ is the number of $(u, v) \in \overline \pi$ such that $\epsilon_u = \epsilon_v$, and $q$ is the number such that $\epsilon_u = - \epsilon_v$. This is exactly $g(\pi, \bm \epsilon)$.

If $\pi$ has a turn-around point, then by Lemma \ref{lemma:double-cycle}, $\pi \in \U_{0,n/2}(n)$ with $\overline{\pi} \in \NC_2(n)$. In this first case, we must have $u$ and $v$ have opposite orientations for all $(u, v) \in \overline \pi$, and so $E(\pi, \bm j, \bm \epsilon) = f(\pi, \bm \epsilon)$. 

Now suppose $\overline{G}(\pi)$ has a leaf $V$. Since $\overline\pi$ is a pairing, exactly two edges have $V$ as an endpoint in $G(\pi)$. By the construction of $G(\pi)$, we obtain that $V = \set{k}$ is a singleton such that $\set{\gamma^{-1}(k), k} \in \overline{\pi}$. Then, by Lemma \ref{lemma:pruning}, it follows that $\set{k}$ is a double-leaf and $\overline{\pi} = \overline\pi_1 \cup \overline\pi_2$, where
\[
    \pi_1 = \set{\set{\gamma^{-1}(k)}, \set{k}}, \qquad \pi_2 = \pi|_{[n+2] \setminus \set{\gamma^{-1}(k), k}}.
\]
The contribution of the edge $(\gamma^{-1}(k), k) \in \overline\pi$ to $E(\pi, \bm j, \bm \epsilon)$ is $r^2$ if $\epsilon_{\gamma^{-1}(k)} = \epsilon_{-k}$, and $\sigma^2$ if $\epsilon_{\gamma^{-1}(k)} = -\epsilon_{-k}$. Thus
\[
E(\pi, \bm j, \bm \epsilon)
=
E(\pi_2, \bm j_2, \bm\epsilon_2) \times
\begin{cases} r^2 & \epsilon_{\gamma^{-1}(k)} = \epsilon_{-k}, \\
         \sigma^2 & \epsilon_{\gamma^{-1}(k)} = -\epsilon_{-k}.
\end{cases}
\]
By induction, as in the proof of Lemma~\ref{lemma:double-trees}, we keep removing leaves until we reach the case where there are no leaves, and then we have already established the lemma in these cases.

\medskip\noindent
\textit{Case} $(b)$: Suppose $E(\pi, \bm j, \bm \epsilon) \not = 0$ and $r^2 = 0$.

We cannot have a $(u, v) \in \overline\pi$ with $\epsilon_u = \epsilon_v$, as it produces a factor of $\esp(W_{12}^2) = r^2$. In addition, by independence, we must have $\overline \pi \leq \ker(\bm j)$. When $\pi \in \U_{0,1}(n)$, this means  we must have  $\epsilon_u = \epsilon_v$  for all edges $(u, v)$ not in the loop. For $\pi \in \U_{0, k}$, with $k \geq 3$, we must have $\epsilon_u = \epsilon_v$ for all edges $(u, v)$: i.e. $\overline\pi \leq \ker(\bm j) \wedge \ker( \bm \epsilon)$. For $\pi \in \U_{1, k}$, for $k \geq 3$, this means $\epsilon_u = \epsilon_v$ for all $(u, v)$ not in the cycle and $\epsilon_u = - \epsilon_v$ for all $(u, v) $ in the cycle. This is exactly what is claimed in case $(b)$. 
\end{proof}

Once we know that $\pi \in \U_{m, k}(n)$, then the proof of the Lemma above shows us how to compute $E(\pi, \bm j, \bm \epsilon)$. We restate this in the following Remark.

\begin{remark}\label{corollary:m-k unicycle-converse}
Let $n \geq 2$ and let $\pi \in \PP(n)$ be a partition with $\#(\pi) = \#(\overline\pi) =  n/2$.  If $\pi \in \U_{m,k}(n)$, for some $m \in \set{0,1}$ and some $k \neq 2$, with $\overline\pi \leq \ker(\bm j)$ then
\[
    E(\pi, \bm j, \bm \epsilon) = \begin{cases}
        \tilde f(\pi, \bm \epsilon) & \text{if } m = 0,\, k = 1,\\ 
        f(\pi, \bm \epsilon) & \text{if } m = 0,\, k \geq 3,\\
        g(\pi, \bm \epsilon) & \text{if } m = 1,\, k \geq 3.
    \end{cases}
\]
\end{remark}

\begin{corollary}\label{cor:m-k unicycle}
Every $\pi \in \U_{0,k}(n)$ satisfies $\overline\pi \in \NC(n)$. Furthermore, if $k \neq 2$, then $\overline{\pi} \in \NC_2(n)$.
\end{corollary}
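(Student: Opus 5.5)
The plan is to argue purely combinatorially, without reference to $\bm j$ or $\bm \epsilon$, by induction on $n$, pruning double-leaves as in the proofs of Lemma~\ref{lemma:double-trees} and Lemma~\ref{lemma:m-k unicycle}. The induction rests on the second bullet of Lemma~\ref{lemma:pruning}. If $\set{k}$ is a double-leaf of $\pi$ and $\rho = \pi|_{[n]\setminus\set{\gamma^{-1}(k),k}}$, then $\overline\pi = \set{\gamma^{-1}(k),k}\cup\overline\rho$, and $\overline\pi$ is non-crossing if and only if $\overline\rho$ is. The pair $\set{\gamma^{-1}(k),k}$ is a cyclic interval, so inserting it also preserves being a pairing. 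Moreover, $G(\rho)$ is obtained from $G(\pi)$ by deleting the leaf and its two oppositely oriented edges (Remark~\ref{rmk:pruning}). Hence, when $n$ is large enough, $\rho\in\U_{0,k}$ of the smaller set with the same $k$. Relabelling via the order-preserving identification, it therefore suffices to treat the case where $\overline G(\pi)$ has no leaves.

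Case III ($k\geq 3$, $m=0$). With no leaves, $\overline G(\pi)$ is a cycle of length $n/2$ and $\overline\pi$ is a pairing. The case $m=0$ means the paired edges in the cycle have opposite orientations. If $\pi$ had no turn-around point, Lemma~\ref{lemma:double-cycle} would give $\pi=\overline\pi=\set{\set{u,u+n/2}}$. In that configuration the paired edges $u$ and $u+n/2$ have the same orientation, which is the $\U_{1,n/2}$ case and is excluded. So $\pi$ has a turn-around point, and Lemma~\ref{lemma:double-cycle} gives $\overline\pi=\set{\set{u,v}: u+v\equiv 2k-1 \pmod n}$. This is a reflection pairing of the cycle, hence a non-crossing pairing. (The leafless graph has at least 6 edges, so $n\ge 6$ and the lemma applies.)

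Case I ($k=1$). Without leaves, $\overline G(\pi)$ is a single vertex with a loop, so $\pi=1_n$. Since $\overline\pi$ is a pairing, $n=2$ and $\overline\pi=1_2\in\NC_2(2)$.

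Case II ($k=2$). Without leaves, $\overline G(\pi)$ is a tree with a single edge, that edge carries the block of size four, and $n=4$. As in the proof of Lemma~\ref{lemma:2-4 trees}, $\pi=\set{\set{1,3},\set{2,4}}$ and $\overline\pi=1_4\in\NC(4)$. In this case $\overline\pi$ is not a pairing, which is why the second assertion requires $k\neq2$. Pruning leaves keeps the size-four block intact and preserves non-crossingness by Lemma~\ref{lemma:pruning}, so $\overline\pi\in\NC(n)$ for all $\pi\in\U_{0,2}(n)$. The pruning lemma is stated for general $\overline\rho$, not only pairings, so it applies here as well.

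The main obstacle is justifying the reduction steps. First, a leaf of $\overline G(\pi)$ must be a double-leaf singleton $\set{k}$ of $\pi$; this is true because exactly two edges meet the leaf, using that the blocks adjacent to leaves have size two. Second, pruning preserves membership in $\U_{0,k}$, in particular the orientation conditions. Both follow the arguments already given in Lemmas~\ref{lemma:double-trees} and~\ref{lemma:m-k unicycle}, so I expect them to be routine. The only genuinely new check is ruling out the no-turn-around configuration when $m=0$, which I handled above by the orientation argument.
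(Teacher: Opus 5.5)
Your argument is correct, but it takes a different route from the paper. The paper proves the corollary in one line by specialization. It takes $\bm j=\bm\epsilon=(1,\ldots,1)$ and $r^2=\sigma^2$, so every factor of $E(\pi,\bm j,\bm\epsilon)$ is one of $\sigma^2$, $s^2$, $\alpha$. Hence $E(\pi,\bm j,\bm\epsilon)\neq 0$ for every $\pi\in\U_{0,k}(n)$, and the non-crossing conclusions already built into Lemma~\ref{lemma:2-4 trees} (for $k=2$) and Lemma~\ref{lemma:m-k unicycle} (for $k\neq 2$, where disjointness of the classes forces $m=0$) apply directly.

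You instead run the underlying combinatorics yourself. You prune double-leaves via Lemma~\ref{lemma:pruning}, then settle the leafless base cases with Lemma~\ref{lemma:double-cycle}, excluding the no-turn-around configuration by checking that its paired edges have the same orientation. Your version is self-contained and makes plain that the statement is purely graph-theoretic, with no dependence on the moments of the entries. The paper's version is shorter because it reuses work already done, but it routes a combinatorial fact through expectations and leaves implicit that the specialization makes every $E(\pi,\bm j,\bm\epsilon)$ nonzero.

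One step needs tightening. You assert that a leaf of $\overline G(\pi)$ is always a singleton double-leaf, ``using that the blocks adjacent to leaves have size two.'' This is false in Case~II. A leaf attached through the block of size four has all four edges of that block incident to it, so it is a block of size two of $\pi$; for example, both vertices when $n=4$. The fix is to prune only leaves attached by a pair. When $n\geq 6$ the tree has at least two edges, hence at least two leaves lying on distinct edges, and at least one of those edges is a pair. Accordingly, the terminal configuration in Case~II is ``no leaf attached by a pair,'' not ``no leaves.'' That configuration forces the single-edge tree with $n=4$.

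Similarly, in Case~I you should prune only loopless leaves; one exists whenever the tree part has an edge.

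Finally, you reuse $k$ both for the cycle length and for the turn-around point of Lemma~\ref{lemma:double-cycle}; rename one of them.
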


\begin{proof}
The result follows because of Lemma \ref{lemma:2-4 trees} and Lemma \ref{lemma:m-k unicycle} by setting $\bm j = \bm \epsilon = (1,\ldots, 1)$ and $r^2 = \sigma^2$. 
\end{proof}

\begin{remark}\label{rmk:mapping_Q}
Let $\pi \in \DT(n)$ and let $U, V \in \pi$ be vertices at distance $k$ in $\overline G(\pi)$. We denote by $Q_{\pi}(U, V)$ the partition resulting from merging the blocks $U$ and $V$. Note that $G(Q_{\pi}(U, V))$ is the graph obtained by identifying the vertices $U$ and $V$ in $G(\pi)$.

Since $G(\pi)$ is a double-tree, we have the following cases when identifying two vertices $U, V \in \pi$ at distance $k$ in $G(\pi)$:
\begin{itemize}
    \item When $k = 2$, this operation creates exactly one edge in $\overline G(Q_{\pi}(U, V))$ whose corresponding block in $\overline{Q_{\pi}(U, V)}$ is of size four. Then the graph $\overline G(Q_{\pi}(U, V))$ is formed by merging the edges of the unique path that exists between $U$ and $V$ in the tree $\overline{G}(\pi)$. Moreover, $\overline{Q_{\pi}(U, V)}$ is obtained from $\overline\pi$ by merging two blocks.
    \item When $k \neq 2$, this operation creates exactly one cycle of length $k$ in $\overline G(Q_{\pi}(U, V))$. Then $\overline G(Q_{\pi}(U, V))$ is formed by closing the unique path that exists between $U$ and $V$ in the tree $\overline{G}(\pi)$. Since no edges are merged,  $\overline{\pi} = \overline{Q_{\pi}(U, V)}$.
\end{itemize}
Hence, $Q_{\pi}(U, V) \in \U_{0,k}(n)$.
\end{remark}

\begin{definition}
We define the mapping
\[
    Q : \set{(\pi, U, V) : \pi \in \DT(n), U, V \in \pi, U \neq V} \to \bigcup_{k \geq 1} \U_{0,k}(n)
\]
given by $(\pi, U, V) \mapsto Q_{\pi}(U, V)$.
\end{definition}

Note that, by Remark~\ref{rmk:mapping_Q}, the mapping $Q$ is well-defined.

\begin{lemma}\label{lemma:smallest_partition}
Suppose $\rho \in \NC_2(n)$. 
\begin{enumerate}
    \item If $\pi = \Kr^{-1}(\rho)$, then $\overline\pi = \rho$ and the paired edges of $G(\pi)$ have opposite orientations. 

    \item If $\pi \in \PP(n)$ is such that $\overline\pi = \rho$ and the paired edges of $G(\pi)$ have opposite orientations, then $\Kr^{-1}(\rho) \leq \pi$.
\end{enumerate}
\end{lemma}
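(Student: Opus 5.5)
Part 1 follows directly from Lemma~\ref{lemma:description-double-trees} together with the double-tree structure. Given $\rho \in \NC_2(n)$, set $\pi = \Kr^{-1}(\rho)$, so that $\Kr(\pi) = \rho \in \NC_2(n)$. I will first show that $\overline\pi = \Kr(\pi)$; Lemma~\ref{lemma:description-double-trees} then gives $\pi \in \DT(n)$, and in a double-tree the paired edges have opposite orientations by definition.

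To get $\overline\pi = \Kr(\pi)$, I argue by induction on $n$ using the pruning of Lemma~\ref{lemma:pruning}.
\begin{itemize}
\item Since $\rho$ is a non-crossing pairing, it contains an interval $\{\gamma^{-1}(k), k\}$.
\item The relation $\Kr(\pi) = \rho$ forces $\{k\}$ to be a singleton of $\pi$ with $\gamma^{-1}(k) \sim_\pi \gamma(k)$. This is the standard Kreweras fact that an interval pair of $\Kr(\pi)$ surrounds a singleton of $\pi$.
\item Hence $\{k\}$ is a double-leaf. Removing $\gamma^{-1}(k)$ and $k$ gives $\pi_2$ with $\Kr(\pi_2) = \rho \setminus \{\{\gamma^{-1}(k),k\}\}$.
\item Induction gives $\overline{\pi_2} = \Kr(\pi_2)$, and Lemma~\ref{lemma:pruning} gives $\overline\pi = \{\gamma^{-1}(k),k\} \cup \overline{\pi_2} = \rho$.
\end{itemize}
In fact this repeats the argument of Lemma~\ref{lemma:description-double-trees}, so I would simply cite it.

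Part 2 is by induction on $n$, again pruning an interval pair. Let $\pi$ satisfy $\overline\pi = \rho$ with paired edges of opposite orientations, and pick an interval pair $\{\gamma^{-1}(k), k\} \in \rho$.

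The edges $e_{\gamma^{-1}(k)} = ([k],[\gamma^{-1}(k)])$ and $e_k = ([\gamma(k)],[k])$ share the endpoint $[k]_\pi$. Because they are paired, they have the same set of endpoints, and because the orientations are opposite, $[\gamma^{-1}(k)]_\pi = [\gamma(k)]_\pi$. The only exception is the loop case $[\gamma^{-1}(k)] = [k] = [\gamma(k)]$, where the conclusion $\gamma^{-1}(k) \sim_\pi \gamma(k)$ also holds. In every case $\gamma^{-1}(k) \sim_\pi \gamma(k)$.

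Next let $\pi' = \pi|_{[n]\setminus\{\gamma^{-1}(k),k\}}$, viewed as a partition of the smaller cyclically ordered set, in which $\gamma^{-1}(k)$ now precedes $\gamma(k)$.
\begin{itemize}
\item The remaining edges of $G(\pi')$ are those of $G(\pi)$ except that $e_{\gamma^{-1}(k)}$ and $e_k$ are replaced by the single edge $e_{\gamma^{-1}(\gamma^{-1}(k))}$, suitably relabelled. The relation $\gamma^{-1}(k) \sim_\pi \gamma(k)$ guarantees that the endpoints of all other edges are unchanged.
\item Therefore $\overline{\pi'} = \rho \setminus \{\{\gamma^{-1}(k),k\}\}$, which is again a non-crossing pairing, and the paired edges still have opposite orientations.
\item If $k$ does not form a singleton of $\pi$, removing $k$ merely shrinks its block; the restriction of pairs to the smaller set is handled the same way.
\end{itemize}
Induction then gives $\Kr^{-1}(\rho') \le \pi'$, where $\rho' = \rho \setminus \{\{\gamma^{-1}(k),k\}\}$.

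The explicit description of $\Kr^{-1}(\rho)$ is: it equals $\Kr^{-1}(\rho')$, extended to $[n]$, with $\gamma^{-1}(k)$ and $\gamma(k)$ in the same block (they lie in one block of $\Kr^{-1}(\rho')$ on the smaller set) and with $\{k\}$ added as a singleton. Every block of $\Kr^{-1}(\rho)$ is then contained in a block of $\pi$:
\begin{itemize}
\item the singleton $\{k\}$ trivially;
\item the other blocks because $\Kr^{-1}(\rho') \le \pi' = \pi|$ and $\pi$ restricts to $\pi'$ on those elements.
\end{itemize}
Hence $\Kr^{-1}(\rho) \le \pi$. The base case $n = 2$ is immediate, since $\Kr^{-1}(1_2) = 0_2$.

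The main obstacle is checking carefully that deleting the two labels does not change the endpoint structure of the other edges, so that $\overline{\pi'}$ really equals $\rho'$. This is exactly where $\gamma^{-1}(k) \sim_\pi \gamma(k)$ is used, together with the fact that the remaining occurrences of $k$'s block, if $k$ is not a singleton, still correctly label the vertices of the other edges.
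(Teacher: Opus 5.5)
Your proposal is correct. For part~1 you follow the paper in invoking Lemma~\ref{lemma:description-double-trees}, but your pruning induction supplies a step that the paper's bare citation skips. That lemma's hypothesis is $\overline\pi=\Kr(\pi)\in\NC_2(n)$, so one must first derive $\overline\pi=\rho$ from $\Kr(\pi)=\rho$ alone. Your argument does this, so keep it rather than replacing it by a citation.

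For part~2 your route genuinely differs from the paper's. The paper argues directly, without induction:
\begin{itemize}
\item it fixes a block $V=\{i_1<\cdots<i_k\}$ of $\Kr^{-1}(\rho)$ and walks around the vertex $V$ of the double-tree $G(\Kr^{-1}(\rho))$, seeing that $i_{j+1}=\gamma(i_j')$ where $\{i_j,i_j'\}\in\rho$;
\item it then uses that the paired edges $e_{i_j},e_{i_j'}$ of $G(\pi)$ have opposite orientations to get $\gamma(i_j')\sim_\pi i_j$;
\item chaining these gives $V\subseteq[i_1]_\pi$.
\end{itemize}
In permutation language this is the one-liner $\Kr^{-1}(\rho)=\gamma\rho$, together with $u\sim_\pi\gamma(\rho(u))$ for every $u$, which is exactly what ``opposite orientation'' means for the pair $\{u,\rho(u)\}$.

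Your induction by pruning an interval pair reaches the same conclusion. It fits the framework of Lemmas~\ref{lemma:pruning}--\ref{lemma:description-double-trees} and explicitly handles the case where $\{k\}$ is not a singleton of $\pi$. The cost is the bookkeeping you identify: restriction must preserve $\overline{\,\cdot\,}$ and orientations, and you need a recursive description of $\Kr^{-1}$. The paper's argument is shorter and shows the local reason the inequality holds.

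Two phrases should be tightened.
\begin{itemize}
\item The edges $e_{\gamma^{-1}(k)}$ and $e_k$ are simply deleted, and $e_{\gamma^{-2}(k)}$ is redirected from $[\gamma^{-1}(k)]_\pi$ to $[\gamma(k)]_\pi$, which is the same vertex. What makes $\overline{\pi'}=\rho'$ with orientations preserved is that the map $V\cap S\mapsto V$ from blocks of $\pi'$ to blocks of $\pi$ is injective and carries each edge of $G(\pi')$ to the corresponding edge of $G(\pi)$.
\item $\gamma^{-1}(k)$ does not lie in the smaller set. It is adjoined to the block of $\Kr^{-1}(\rho')$ containing $\gamma(k)$. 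Showing that this enlarged block lies inside a block of $\pi$ therefore uses $\gamma^{-1}(k)\sim_\pi\gamma(k)$ once more, not merely $\Kr^{-1}(\rho')\le\pi'$.
\end{itemize}
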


\begin{proof}
By Lemma \ref{lemma:description-double-trees}, we know that $\pi = \Kr^{-1}(\rho) \in \DT(n)$ with $\overline{\pi} = \rho$. Moreover, by the definition of a double-tree, the paired edges of $G(\pi)$ in $\rho$ have opposite orientations. This proves $(i)$.

Let $\pi \in \PP(n)$ be such that $\overline{\pi} = \rho$ and the paired edges in $G(\pi)$ have opposite orientations. Let $V = \{i_1, \dots, i_k\} \in \Kr^{-1}(\rho)$ be a vertex of $G(\Kr^{-1}(\rho))$. Let $U \in G(\pi)$ be the vertex containing $i_1$. We shall show that $V \subseteq U$. As $V$ was an arbitrary vertex of $G(\Kr^{-1}(\rho))$, this will prove $(ii)$. If $k = 1$ then $V = \{i_1\} \subseteq U$ and we are done. So now suppose $k \geq 2$.

\begin{figure}
\begin{center}
    \includegraphics[width=12em]{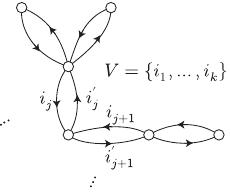}
    \caption{\label{fig:grho}\small The graph of $G(\rho)$ in the proof of Lemma \ref{lemma:smallest_partition}.}
\end{center}
    
\end{figure}
We shall suppose that we have labelled the edges of $G(\Kr^{-1}(\rho))$ pointing to $V$ in order so that:
\[
i_1 < i_2 < \cdots < i_k.
\]
See Figure \ref{fig:grho}. We move around the graph $G(\Kr^{-1}(\rho))$, starting with the edge labelled $i_1$; then move to $\gamma(i_1)$, then to $\gamma^2(i_1)$, and then to $\gamma^3(i_1)$ until we return to $i_1 = \gamma^n(i_1)$ after $n$ steps. While traversing this circuit we must visit the edges of $G(\Kr^{-1}(\rho))$ in the order of the labels. 

By one hand, since $G(\Kr^{-1}(\rho))$ is a double-tree, for each $1 \leq j \leq k$, there is $i'_j$ such that $\{i_j, i'_j\} \in \rho$. Moreover, since $G(\Kr^{-1}(\rho))$ is the quotient of $G(\gamma)$ with respect to $\Kr^{-1}(\pi)$, it follows that the first time we return to the vertex $V$ after moving from the edge labelled $i_j$, when traversing the edges of $G(\Kr^{-1}(\rho))$ in the order of the labels, is trough the edge labelled $i_j'$.

By the other hand, by construction of $G(\pi)$, the edge $(\gamma(i'_1), i_1')$ labelled $i'_1$ has a vertex in $U = [\gamma(i'_1)]_{\rho}$. That is, because of the way we labelled $\set{i_1, \ldots, i_l}$ and the fact that $i_1 < \gamma(i_1')$, it follows that $i_2 = \gamma(i_1') \in U$. By the same argument, we have that $i_3 = \gamma(i'_2) \in U$. Continuing inductively through the list $\{i_1, \dots, i_k\}$, we conclude that $V = \{i_1, \dots, i_k\} \subseteq U$, as claimed. 
\end{proof}

\begin{lemma}\label{lemma: merge vertices}
Let $\pi \in \U_{0,k}(n)$, for some $k \geq 1$. Let $Q^{-1}(\pi)$ be the set of triples $(\rho, U, V)$ such that $Q_{\rho}(U, V) = \pi$. If $k \neq 2$, then $\abs{Q^{-1}(\pi)} = 1$. If $k = 2$, then $\abs{Q^{-1}(\pi)} = 2$.
\end{lemma}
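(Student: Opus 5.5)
The plan is to reconstruct the preimage explicitly. Given $\pi\in\U_{0,k}(n)$, any triple $(\rho,U,V)$ with $Q_\rho(U,V)=\pi$ must satisfy three conditions: $\rho\le\pi$; $\rho$ is obtained from $\pi$ by splitting exactly one block $W=U\cup V$ into two pieces; and $\rho\in\DT(n)$. By Lemma~\ref{lemma:description-double-trees}, the last condition is equivalent to $\overline\rho=\Kr(\rho)\in\NC_2(n)$. So the task is to count the ways of splitting one block of $\pi$ so that the result is $\Kr^{-1}$ of a non-crossing pairing. Throughout, triples are unordered in $U,V$, since $Q_\rho(U,V)=Q_\rho(V,U)$.

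For $k\ne 2$ I will proceed as follows. By Remark~\ref{rmk:mapping_Q}, any preimage satisfies $\overline\rho=\overline\pi$. By Corollary~\ref{cor:m-k unicycle}, $\overline\pi\in\NC_2(n)$, and in $G(\pi)$ the paired edges have opposite orientations. Hence Lemma~\ref{lemma:smallest_partition}(i) identifies $\rho$ as $\Kr^{-1}(\overline\pi)$. This gives uniqueness of $\rho$. Conversely, I will set $\rho:=\Kr^{-1}(\overline\pi)$. Lemma~\ref{lemma:smallest_partition}(ii) gives $\rho\le\pi$, and $\#(\rho)=n/2+1=\#(\pi)+1$. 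So $\pi$ is obtained from $\rho$ by merging exactly two blocks $U,V$, which are then determined uniquely. These two blocks are at distance $k$ in the tree $\overline G(\rho)$, since merging them creates the $k$-cycle. Therefore $|Q^{-1}(\pi)|=1$. The one point needing care is $k=1$, where the merged vertices are joined by an edge and the resulting loop edges pair up inside a single block; Remark~\ref{rmk:mapping_Q} already covers this case.

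For $k=2$, $\overline\pi$ has one block $B$ of size $4$, consisting of two edges in each orientation, and all other blocks are opposite-orientation pairs. By Lemma~\ref{lemma:2-4 trees}, $\overline\pi\in\NC(n)$. A preimage $\rho$ must have $\overline\rho$ equal to $\overline\pi$ with $B$ split into two pairs, each made of edges of opposite orientation, with $\overline\rho$ a non-crossing pairing. Writing $B=\{b_1<b_2<b_3<b_4\}$, the non-crossing splittings are $\{b_1b_2\},\{b_3b_4\}$ and $\{b_1b_4\},\{b_2b_3\}$. In $G(\pi)$ the two vertices of the double edge are traversed by the Eulerian circuit $\gamma^{-1}$, so orientations alternate along $b_1,\dots,b_4$. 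Both non-crossing splittings therefore pair opposite orientations, while the crossing one pairs equal orientations. For each of the two admissible $\rho'$ I will take $\rho=\Kr^{-1}(\rho')$ and argue exactly as in the $k\ne2$ case: Lemma~\ref{lemma:smallest_partition}(ii) gives $\rho\le\pi$, the block count forces a single merge, and the distance between the merged vertices is $2$. The two choices of $\rho$ are distinct because they have different $\overline\rho$. This gives $|Q^{-1}(\pi)|=2$.

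I expect the main obstacle to be applying Lemma~\ref{lemma:smallest_partition}(ii) when $k=2$. That lemma requires $\overline\pi=\rho'$, but here $\overline\pi$ is strictly coarser than $\rho'$. I will have to redo its argument with $\rho'$ in place of $\overline\pi$, checking that the walk-around step only uses that each edge $i_j$ is matched with an opposite-orientation partner $i_j'$ lying in the same block of $\overline\pi$, returning to the same vertex. The other point that needs verification is the alternation of orientations within $B$.
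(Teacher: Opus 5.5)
Your proposal is correct and follows essentially the same route as the paper. The candidate double-tree(s) are $\Kr^{-1}$ of the non-crossing pairing(s) lying below $\overline\pi$, unique by Lemma~\ref{lemma:description-double-trees}. Lemma~\ref{lemma:smallest_partition}(ii), adapted for $k=2$ just as the paper does parenthetically, gives $\rho\le\pi$, and $\#(\rho)=\#(\pi)+1$ then forces a unique merge.

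You are a bit more explicit than the paper in two places: you show via Remark~\ref{rmk:mapping_Q} that every preimage must have this form, and you check that orientations alternate in the size-$4$ block. One small slip: $\rho=\Kr^{-1}(\overline\pi)$ follows from Lemma~\ref{lemma:description-double-trees} together with injectivity of $\Kr$, not from part (i) of Lemma~\ref{lemma:smallest_partition}.
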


\begin{proof}
Suppose $k \neq 2$. By Corollary \ref{cor:m-k unicycle}, it follows that $\overline{\pi} \in \NC_2(n)$. Let $\rho = \Kr^{-1}(\overline{\pi})$. Since $\Kr^{-1} : \NC_2(n) \to \PP(n)$ is a one-to-one function, by Lemma \ref{lemma:description-double-trees}, it follows that $G(\rho)$ is the only double-tree such that
\[
    \overline{\rho} = \Kr(\Kr^{-1}(\overline{\pi})) = \overline{\pi}.
\]
Moreover, we know that $\#(\rho) = \frac{n}{2} + 1$. Because of Lemma \ref{lemma:smallest_partition}, we obtain that $\rho \leq \pi$. Since $\#(\pi) = \frac{n}{2}$, this shows the existence of unique $U, V \in \rho$ such that $Q_{\rho}(U, V) = \pi$.

Suppose now that $k = 2$. By Corollary \ref{cor:m-k unicycle}, and the definition of a $(0,2)$-unicycle, we know that the blocks of $\overline\pi \in \NC(n)$ are of size two, except for one block of size four. Then, there exist exactly two partitions $\rho_1, \rho_2 \in \NC_2(n)$ such that $\rho_1, \rho_2 \leq \overline\pi$. Because of Lemma \ref{lemma:description-double-trees} and the fact that $\Kr^{-1} : \NC(n) \to \NC(n)$ is a bijection, we know that $\pi_i = \Kr^{-1}(\rho_i) \in \DT(n)$ with $\overline\pi_i = \rho_i$ and $\pi_1 \neq \pi_2$. Again, because of Lemma \ref{lemma:smallest_partition} (considering $G(\pi)$ with $\pi_j$ as the paired edges in $G(\pi)$), it follows that $\pi_1, \pi_2 \leq \pi$. Since $\#(\pi) = \frac{n}{2}$, this shows the existence of unique $U, V \in \pi_i$ such that $Q_{\pi_i}(U, V) = \pi$. This is what was claimed.
\end{proof}

\begin{remark}
Note that $\overline\pi_1$ and $\overline\pi_2$ are obtained from $\pi$ by splitting the edges in the block of size four into two pairs of edges with opposite orientations.
\end{remark}

Recall that $B_n = \sum_{\#(\pi) = n/2} E(\pi, \bm j, \bm \epsilon)$ where $\pi \in \PP(n)$. By Equation \eqref{eq:edge upper bound} we have that either $\#(\overline\pi) = n/2 - 1$ or $\#(\overline\pi) = n/2$. So by Lemmas \ref{lemma:2-4 trees} and \ref{lemma:m-k unicycle} respectively, we can write
\begin{align*}
    B_n &= \kern-0.75em \sum_{\pi \in \U_{0,1}(n)} \tilde{f}(\pi, \bm \epsilon) \1_{\overline\pi \leq \ker(\bm j)} + \sum_{\pi \in \U_{0,2}(n)} E(\pi, \bm j, \bm \epsilon)\\
    & + \sum_{\substack{k \geq 3,\\ \pi \in \U_{0,k}(n)}} f(\pi, \bm \epsilon) \1_{\overline\pi \leq \ker(\bm j)} + \sum_{\substack{k \geq 3\\ \pi \in \U_{1,k}(n)}} g(\pi, \bm \epsilon)\1_{\overline\pi \leq \ker(\bm j)}.
\end{align*}

Recall that $\#(\pi) = \frac{n}{2} + 1$ for all $\pi \in \DT(n)$, and then there are $\binom{\frac{n}{2} + 1}{2}$ ways of choosing two different blocks of $\pi$. Moreover, for $\pi \in \U_{0,k}(n)$ with $k \neq 2$, we know that $\overline{\pi} = \overline{Q^{-1}(\pi)}$. For any $\pi \in \U_{0,2}(n)$, let $\set{\pi_1, \pi_2} = Q^{-1}(\pi)$ be the preimage provided by Lemma \ref{lemma: merge vertices}. For $\pi \in \U_{0,1}(n)$, we decree (by convention) that the edges in the loop have opposite orientation. With this convention $f$ is also defined on $\U_{0, 1}(n)$. We extend the definition of $f(\pi, \bm \epsilon)$ for $\pi \in \U_{0,2}(n)$ by considering $\overline\pi_1$ or $\overline\pi_2$ as the pairing on the edges of $G(\pi)$. We implicitly state which pairing we are using based on the context. Then, by Lemma \ref{lemma: merge vertices},
\begin{align*}
    \binom{\frac{n}{2} + 1}{2}A_n &= \binom{\frac{n}{2} + 1}{2}  \sum_{\pi \in \DT(n)} f(\pi, \bm \epsilon) \1_{\overline{\pi} \leq \ker(\bm j)} = \sum_{\substack{\pi \in \DT(n)\\ U \not= V \in \pi}} f(\pi, \bm \epsilon) \1_{\overline\pi \leq \ker(\bm j)}\\
    &= \kern-0.75em \sum_{\substack{k \neq 2\\ \pi \in \U_{0,k}(n)}} f(\pi, \bm \epsilon)\1_{\overline\pi\leq \ker(\bm j)} + \sum_{\substack{\pi \in \U_{0,2}(n)}} \Big\{f(\pi, \bm \epsilon) \1_{\overline{\pi}_1 \leq \ker(\bm j)} \\&\qquad  + f(\pi, \bm \epsilon) \1_{\overline{\pi}_2 \leq \ker(\bm j)}\Big\}.
\end{align*}
Therefore,
\begin{align}\label{eq:complex 1/N expansion}
    B_n & - \binom{\frac{n}{2} + 1}{2}A_n 
    = \kern-0.75em \sum_{\substack{k \geq 3\\ \pi \in \U_{1,k}(n)}} g(\pi, \bm \epsilon) \1_{\overline \pi \leq \ker(\bm j)} + \sum_{\pi \in \U_{0,1}} (\tilde{f}(\pi, \bm \epsilon) - f(\pi, \bm \epsilon))\1_{\overline\pi \leq \ker(\bm j)} \notag\\
    &\kern -1 em + \sum_{\pi \in \U_{0,2}(n)} \paren*{E(\pi, \bm j, \bm \epsilon) - f(\pi, \epsilon)\1_{\overline\pi_1 \leq \ker(\bm j)} - f(\pi, \epsilon) \1_{\overline\pi_2 \leq \ker(\bm j)}}.
\end{align}

Recall from Lemma \ref{lemma: merge vertices} that, when $k = 2$, there were two pairings $\overline\pi_1$ and $\overline\pi_2$ in $\NC_2(n)$ with $\overline\pi_1, \overline\pi_2 \leq \overline\pi$. There is a third pairing $\overline\pi_3 \leq \overline\pi$, but this pairing has a crossing. This pairing is the one where we pair edges with the same orientation in the block of $\overline\pi$ of size 4. We have to include this in our calculation when $r^2 \neq 0$. Denote $\U_{1,k}(n) := \U_{0,k}(n)$ for $k = 1, 2$. We extend the definition of  $g(\pi, \bm \epsilon)$ for $\pi \in \U_{1,k}(n)$, with $k = 1, 2$, in the following way:
\begin{itemize}
    \item if $k = 1$, we consider that the loops of $G(\pi)$ have the same orientation;
    \item if $k = 2$, we consider $G(\pi)$ with the pairing $\overline{\pi}_3$ on its edges.
\end{itemize}
Using our new notation we have
\[
    \sum_{\substack{k = 1, 2\\ \pi \in \U_{0,k}(n)}} g(\pi, \bm \epsilon) \1_{\overline\pi \leq \ker(\bm j)} = \kern-0.75em\sum_{\substack{k = 1, 2\\ \pi \in \U_{1,k}(n)}} g(\pi, \bm \epsilon) \1_{\overline\pi \leq \ker(\bm j)},
\]
so (with the convention that $\overline\pi_3$ denotes the  pairing under $\overline\pi$ that has a crossing in the block of size $4$, $\overline\pi_1$ and $\overline\pi_2$ are the non-crossing ones)
\begin{align}\label{eq:real 1/N expansion}\lefteqn{
    B_n - \binom{\frac{n}{2} + 1}{2}A_n 
    = \sum_{\substack{k \geq 1\\ \pi \in \U_{1,k}(n)}} g(\pi, \bm \epsilon)  \1_{\overline \pi \leq \ker(\bm j)}} \notag \\ 
    &\mbox{}+ \sum_{\pi \in \U_{0,1}(n)} (\tilde f(\pi, \bm \epsilon) - f(\pi, \bm \epsilon) - g(\pi, \bm \epsilon))
    \1_{\overline\pi \leq \ker(\bm j)} \notag \\
    & +  \sum_{\pi \in \U_{0,2}(n)} \Big(E(\pi, \bm j, \bm \epsilon)- f(\pi, \bm \epsilon) \1_{\overline{\pi}_1 \leq \ker(\bm j)}\notag \\ &\qquad\mbox{} - f(\pi, \bm \epsilon)\1_{\overline{\pi}_2 \leq \ker(\bm j)} - g(\pi, \bm \epsilon)\1_{\overline\pi_3 \leq \ker(\bm j)}\Big).
\end{align}

\begin{remark}
When $W$ is a GUE matrix, we have that $r^2 = 0$, $s^2 - \sigma^2 - r^2 = 0$, and $k_4(W^{(1)}_{12}, \overline{W^{(1)}_{12}},\ab W^{(1)}_{12}, \overline{W^{(1)}_{12}}) = 0$. When $W$ is a GOE matrix, we have that $r^2 = \sigma^2$, $s^2 - \sigma^2 - r^2 = 0$ and $k_4(W^{(1)}_{12}, \overline{W^{(1)}_{12}},\ab W^{(1)}_{12}, \overline{W^{(1)}_{12}}) = 0$. We shall see in the next lemma that this implies that in both these cases the infinitesimal cumulants vanish.
\end{remark}

\begin{lemma}\label{lemma:non-crossing part}
Let 
\[
\kappa_2(w_{j_u}^{(\epsilon_u)}, w_{j_v}^{(\epsilon_v)}) = 
\esp\Big(W_{12}^{(j_u, \epsilon_u)} \overline{W_{12}^{(j_v, \epsilon_v)}}\,\Big)=
\1_{ j_u = j_v} 
\begin{cases}
\sigma^2   & \text{if }\epsilon_u = \epsilon_v, \\
r^2  & \text{if } \epsilon_u = - \epsilon_v,
\end{cases}
\]
\begin{align*}\lefteqn{
\kappa_2'( w_{j_u}^{(\epsilon_u)}, w_{j_v}^{(\epsilon_v)} ) } \\ 
& =
\esp\Big(W_{11}^{(j_u)} W_{11}^{(j_v)}\,\Big)
- 
\esp\Big(W_{12}^{(j_u, \epsilon_u)} \overline{W_{12}^{(j_v, \epsilon_v)}}\,\Big)
- 
\esp\Big(W_{12}^{(j_u, \epsilon_u)} \overline{W_{12}^{(j_v, -\epsilon_v)}}\,\Big),
\end{align*}
and 
\[
\kappa_4'(w_{j_t}^{(\epsilon_t)},  w_{j_u}^{(\epsilon_u)}, w_{j_v}^{(\epsilon_v)}, w_{j_y}^{(\epsilon_y)}) 
=
\re\Big[k_4(W^{(j_t, \epsilon_t)}_{12}, \overline{W^{(j_u, \epsilon_u)}_{12}}, W^{(j_v, \epsilon_v)}_{12}, \overline{W^{(j_y, \epsilon_y)}_{12}})\Big],
\]
and let all other $\kappa_n$'s and $\kappa_n'$'s be $0$. Then
\begin{align}\label{eq:block of size 4 complex}\lefteqn{
 \sum_{\pi \in \U_{0,1}(n)}  (\tilde{f}(\pi, \bm \epsilon) - f(\pi, \bm \epsilon) - g(\pi, \bm \epsilon))\1_{\overline\pi \leq \ker(\bm j)}} \notag\\
& \mbox{} +
\sum_{\pi \in \U_{0,2}(n)} 
\big(E(\pi, \bm j, \bm \epsilon) 
- 
f(\pi, \bm \epsilon)\1_{\overline\pi_1 \leq \ker(\bm j)} 
- 
f(\pi, \bm\epsilon) \1_{\overline\pi_2 \leq \ker(\bm j)} \notag \\
& \qquad\qquad\qquad\mbox{} -
g(\pi, \bm\epsilon) \1_{\overline\pi_3 \leq \ker(\bm j)} \big)\\ \notag
& =
\sum_{\pi \in \NC(n)} \partial \kappa_\pi(w_{j_1}^{(\epsilon_1)}, w_{j_2}^{(\epsilon_2)}, \dots, w_{j_n}^{(\epsilon_n)}).
\end{align}
\end{lemma}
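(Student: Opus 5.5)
We first expand the right-hand side. Since the only nonvanishing first-order cumulants are $\kappa_2$ and the only nonvanishing infinitesimal cumulants are $\kappa_2'$ and $\kappa_4'$, a term $\partial\kappa_\pi$ is nonzero only if $\pi\in\NC(n)$ is a pairing except possibly for one distinguished block. That block is either a pair carrying $\kappa_2'$, or a block of size four carrying $\kappa_4'$, with all other blocks pairs carrying $\kappa_2$. Thus
\[
\sum_{\pi\in\NC(n)}\partial\kappa_\pi = \sum_{\substack{\rho\in\NC_2(n)\\ V\in\rho}} \kappa_2'(\cdot|V)\prod_{W\neq V}\kappa_2(\cdot|W) \;+\; \sum_{\substack{\tau\in\NC(n),\ \tau \text{ has one block } B \text{ of size } 4\\ \text{other blocks of size } 2}}\kappa_4'(\cdot|B)\prod_{W\neq B}\kappa_2(\cdot|W).
\]
The plan is to match the first sum with the $\U_{0,1}(n)$ term and the second with the $\U_{0,2}(n)$ term.

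\emph{Matching $\U_{0,1}$ with the $\kappa_2'$ part.} Take $\pi\in\U_{0,1}(n)$ and remove its loop pair $V=\{u,v\}$. Then $\overline\pi\in\NC_2(n)$ by Corollary~\ref{cor:m-k unicycle}, and $V$ is a distinguished block of it. Conversely, given $(\rho,V)$ with $\rho\in\NC_2(n)$ and $V\in\rho$, set $\sigma=\Kr^{-1}(\rho)\in\DT(n)$ and merge the two endpoint vertices of the edge $V$. The resulting partition lies in $\U_{0,1}(n)$, and by Lemma~\ref{lemma: merge vertices} (case $k=1$) this correspondence is a bijection. Under it, $\tilde f$ contributes $s^2$ times the pair weights of the other blocks. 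By the conventions, $f$ and $g$ contribute $\sigma^2$ or $r^2$ on $V$ according as the loop is read with opposite or with same orientation. Applying Remark~\ref{rem:f notation and cumulants} to the remaining pairs gives $\prod_{W\ne V}\kappa_2$. The factor on $V$ is then
\[
\esp(W_{11}^{(j_u)}W_{11}^{(j_v)}) - \esp(W_{12}^{(j_u,\epsilon_u)}\overline{W_{12}^{(j_v,\epsilon_v)}}) - \esp(W_{12}^{(j_u,\epsilon_u)}\overline{W_{12}^{(j_v,-\epsilon_v)}}),
\]
which is exactly $\kappa_2'$. Here one checks that $f$ on the loop gives the $\epsilon$-parallel term and $g$ gives the flipped one, and that the indicator $\overline\pi\le\ker(\bm j)$ reproduces the factor $\1_{j_u=j_v}$.

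\emph{Matching $\U_{0,2}$ with the $\kappa_4'$ part.} For $\pi\in\U_{0,2}(n)$, the partition $\overline\pi$ lies in $\NC(n)$ and has one block $B$ of size $4$ with all other blocks pairs (Lemma~\ref{lemma:2-4 trees}). We show $\pi\mapsto\overline\pi$ is a bijection onto the index set of the second sum. For the inverse, refine $B$ into either of its two non-crossing pairings to get $\rho_1$, take $\Kr^{-1}(\rho_1)$, and merge the two vertices at distance $2$ along the corresponding path. Lemma~\ref{lemma: merge vertices} and Remark~\ref{rmk:mapping_Q} guarantee that this is well defined and independent of which pairing is chosen. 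Every factor outside $B$ is a common $\kappa_2$ product, so it suffices to check, block by block, that
\[
E_B - f_B\1_{\overline\pi_1} - f_B\1_{\overline\pi_2} - g_B\1_{\overline\pi_3} = \re\, k_4\big(W^{(\epsilon_1)}_{12},\overline{W^{(\epsilon_2)}_{12}},W^{(\epsilon_3)}_{12},\overline{W^{(\epsilon_4)}_{12}}\big).
\]
Here $E_B$ is the value from the case analysis of Lemma~\ref{lemma:2-4 trees}, and in the cumulant the labels of $B$ are taken in cyclic order, so that the conjugations alternate with the edge orientation. This identity is the moment--cumulant formula for four variables with vanishing means. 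We have $\re\esp[\cdots]=E_B$ by Remark~\ref{rmk:block_size_two}, and the three pair products are the covariances $\sigma^2$ or $r^2$ (or $0$ when the $j$'s differ), which correspond to $f$ on $\overline\pi_1,\overline\pi_2$ and $g$ on the crossing pairing $\overline\pi_3$. When $\ker(\bm j_2)$ is a pairing, both sides vanish: the left side by the table in Lemma~\ref{lemma:2-4 trees}, and the right side because mixed cumulants of independent variables vanish. Also, the real part commutes with the sum because $\sigma^2,r^2\in\R$.

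\emph{Main obstacle.} The main difficulty is the bookkeeping in the last step. We must verify that the orientation and $\epsilon$ rules defining $f$ and $g$ on the three pairings agree exactly with the pair covariances $\esp(W^{(\epsilon_a)}_{12}\overline{W^{(\epsilon_b)}_{12}})$ or $\esp(W^{(\epsilon_a)}_{12}W^{(\epsilon_b)}_{12})$ dictated by the alternating conjugation pattern of the cumulant. I would handle this by fixing $B=\{1,2,3,4\}$ with the orientation of Figure~\ref{fig:block of size four} (the case $n=4$, $\pi=\{\{1,3\},\{2,4\}\}$), where odd edges point one way and even edges the other, and checking the resulting table against the sets $A,B,C,A',\dots$. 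The general case then follows by pruning double-leaves as in Lemma~\ref{lemma:double-trees}.
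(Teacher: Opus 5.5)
Your proposal is correct and follows essentially the same route as the paper: you split $\sum_{\pi\in\NC(n)}\partial\kappa_\pi$ into the $\kappa_2'$ part over marked non-crossing pairings, matched with $\U_{0,1}(n)$ through Lemma~\ref{lemma: merge vertices}, and the $\kappa_4'$ part over non-crossing partitions with one block of size four, matched with $\U_{0,2}(n)$ through the four-variable moment--cumulant formula, taking the real part by averaging over the two vertex orderings. Your explicit bijection $\pi\mapsto\overline\pi$ onto that index set is a step the paper uses only implicitly; for surjectivity you still need that the two sub-pairs $\{l_1,l_2\},\{l_3,l_4\}$ of the size-four block are adjacent edges in the double tree of $\Kr^{-1}(\rho_1)$, which follows from non-crossingness of $\overline\pi$ (the edges labelled strictly between $l_2$ and $l_3$ form a closed sub-walk), not from Lemma~\ref{lemma: merge vertices} or Remark~\ref{rmk:mapping_Q}.
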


\begin{proof}
We consider the equation \eqref{eq:real 1/N expansion}. In the sum 
\[
 \sum_{\pi \in \U_{0,1}(n)}  (\tilde{f}(\pi, \bm \epsilon) - f(\pi, \bm \epsilon) - g(\pi, \bm \epsilon))\1_{\overline\pi \leq \ker(\bm j)},
\]
we have that
\begin{align*}\lefteqn{
\tilde{f}(\pi, \bm \epsilon) - f(\pi, \bm \epsilon) - g(\pi, \bm \epsilon)  }\\
& = 
\bigg[\esp\Big(W_{11}^{(j_u, \epsilon_u)} W_{11}^{(j_v, \epsilon_v)}\,\Big) 
-
\esp\Big(W_{12}^{(j_u, \epsilon_u)} \overline{W_{12}^{(j_v, \epsilon_v)}}\,\Big)  \\
& \qquad\qquad -
\esp\Big(W_{12}^{(j_u, \epsilon_u)} \overline{W_{12}^{(j_v, -\epsilon_v)}}\,\Big)\bigg]
 r^{2p}\sigma^{2q},
\end{align*}
where $(u, v)$ are the edges of the loop in $G(\pi)$, $p$ is the number of pairs $(u', v')$ of $\overline\pi$ (not on the loop) such that $\epsilon_{u'} = - \epsilon_{v'}$, and $q$ is the number such that $\epsilon_{u'} = \epsilon_{v'}$. Then, we choose one block of $\overline\pi$, and replace $\sigma^2$ (or $r^2$) by 
\[
\esp\Big(W_{11}^{(j_u, \epsilon_u)} W_{11}^{(j_v, \epsilon_v)}\,\Big)  
-
\esp\Big(W_{12}^{(j_u, \epsilon_u)} \overline{W_{12}^{(j_v, \epsilon_v)}}\,\Big)
-
\esp\Big(W_{12}^{(j_u, \epsilon_u)} \overline{W_{12}^{(j_v, -\epsilon_v)}}\,\Big).
\]

From Lemma \ref{lemma: merge vertices}, we know that the graphs $G(\pi)$, with $\pi \in \U_{0, 1}(n)$, are parameterized by the set of non-crossing pairings $\overline \pi \in \NC_2(n)$ and a block of $\overline\pi$, where we make a loop by joining the endpoints of this marked block (which is an edge in $\overline{G}(\pi)$). Thus, on the loop part, we are obtaining $\kappa_2'(w_{j_u}^{(\epsilon_u)}, w_{j_v}^{(\epsilon_v)})$ and on all other edges $\kappa_2(w_{j_u}^{(\epsilon_u)}, w_{j_v}^{(\epsilon_v)})$, and we sum over all edges. This is the same as the Leibniz rule for computing $\partial \kappa_\rho(w_{j_1}, \dots, w_{j_n})$, where we choose one block of $\rho$ and replace $\kappa_2$ by $\kappa'_2$. Hence 
\[
 \sum_{\pi \in \U_{0,1}(n)}  (\tilde{f}(\pi, \bm \epsilon) - f(\pi, \bm \epsilon) - g(\pi, \bm \epsilon))\1_{\overline\pi \leq \ker(\bm j)} 
= 
\sum_{\rho \in \NC_2(n)} \partial \kappa_\rho(w_{j_1}^{(\epsilon_1)},  \dots, w_{j_n}^{(\epsilon_n)}).
\]

Now let us adopt some ad-hoc notation:
\[
\NC_{2,4}(n) = \{ \pi \in \NC(n) \mid \pi
\text{ has one block of size $4$ and all others of size $2$}\}.
\]
Given that $\kappa_n = 0$ unless $n = 2$, and $\kappa_n' = 0$ unless $n= 2$ or $n = 4$, we may write the right hand side of \eqref{eq:block of size 4 complex} as
\[
\sum_{\pi \in \NC_2(n)} \partial \kappa_\pi(w_{j_1}^{(\epsilon_1)}, w_{j_2}^{(\epsilon_2)}, \dots, w_{j_n}^{(\epsilon_n)})
+
\sum_{\pi \in \NC_{2,4}(n)} \partial \kappa_\pi(w_{j_1}^{(\epsilon_1)}, w_{j_2}^{(\epsilon_2)}, \dots, w_{j_n}^{(\epsilon_n)}).
\]
So we just have to show that the second term above is
\[
\sum_{\pi \in \U_{0,2}(n)} \big(E(\pi, \bm j, \bm \epsilon)  
- f(\pi, \epsilon) \1_{\overline\pi_1 \leq \ker(\bm j)} 
- f(\pi, \epsilon) \1_{\overline\pi_2 \leq \ker(\bm j)}
- g(\pi, \epsilon) \1_{\overline\pi_3 \leq \ker(\bm j)}
\Big).
\]

If $\pi \in \U_{0,2}(n)$, then $\overline\pi$ has one block of size $4$ and all others of size $2$. Suppose the block of $\overline\pi$ of size $4$ is $\{ l_1, l_2, l_3, l_4\}$ with $l_1 < l_2 < l_3 < l_4$. This block of size $4$ has to split into $2$ blocks of size $2$ in each of $\overline\pi_1$, $\overline\pi_2$ and $\overline\pi_3$. In the first two there is no crossing while in the third one there is a crossing. Let us say that for $\overline\pi_1$ the two blocks are $\{ l_1, l_2\}$ and $\{l_3, l_4\}$; for $\overline\pi_2$ the two blocks are $\{l_1, l_4\}$ and $\{l_2, l_3\}$; and for $\overline\pi_3$ the two blocks are $\set{l_1, l_3}$ and $\set{l_2, l_4}$. The contribution of this block to $E(\pi, \bm j, \bm \epsilon)$ is
\[
E\Big(
    W^{(j_{l_1},\epsilon_{l_1})}_{i_{l_1} i_{{l_1}+1}}
\overline{W^{(j_{l_2},\epsilon_{l_2})}_{i_{{l_2}} i_{{l_2}+1}}}
W^{(j_{l_3},\epsilon_{l_3})}_{i_{{l_3}} i_{{l_3}+1}} 
\overline{W^{(j_{l_4},\epsilon_{l_4})}_{i_{{l_4}} i_{{l_4}+1}}} \,\Big).
\]
\begin{figure}[t]
\begin{center}
\noindent
\includegraphics[width=13em]{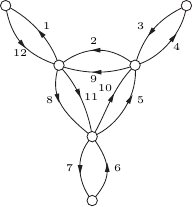}\hfill\includegraphics[width=15em]{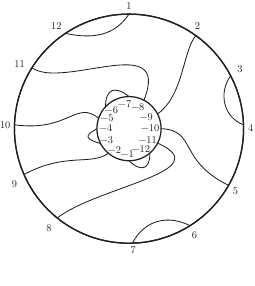}
\end{center}
\caption{\small\raggedright\label{fig:symmetric_annular_example}%
On the left we have $G(\pi)$ with 
$\pi = \{(1),\ab (2,  \ab 9, \ab 12), \ab  (3, 5, 10), (4), (6, 8, 11), (7)\}$, and 
$\overline\pi = \{(1, 12), (2, 9),\ab (3, 4),\ab (5, 10),  (6, 7), (8, 11)\}$. On the right we have $\sigma = \{(1,  12), \ab (2, -9),\ab (3, 4), (5, -10), (6, 7),(8, -11), (-1, -12), (-2 , 9),(-3, -4), \ab (-5 ,10), \ab(-6, -7), (-8, 11)\}$.  Note that for $(r, s) \in \overline \pi$, with the edges labelled $r$ and $s$ having the same orientation, we get $(r, -s)$ and $(-r, s)$ in $\sigma$. When the edges labelled $r$ and $s$ have the same orientation we get $(r, s)$ and $(-r, -s)$ in $\sigma$.} 
\end{figure}
On the other hand the contribution of this block of size $4$ to 
\[
  f(\pi, \epsilon) \1_{\overline\pi_1 \leq \ker(\bm j)} 
+ f(\pi, \epsilon) \1_{\overline\pi_2 \leq \ker(\bm j)}
+ g(\pi, \epsilon) \1_{\overline\pi_3 \leq \ker(\bm j)}
\]
is
\[
E\big(
          W^{(j_{l_1},\epsilon_{l_1})}_{i_{{l_1}} i_{{l_1}+1}} 
\overline{W^{(j_{l_2}, \epsilon_{l_2})}_{i_{{l_2}} i_{{l_2}+1}}} \big)
E\big(
          W^{(j_{l_3}, \epsilon_{l_3})}_{i_{{l_3}} i_{{l_3}+1}} 
\overline{W^{(j_{l_4}, \epsilon_{l_4})}_{i_{{l_4}} i_{{l_4}+1}}}\big)
\]
\[ \mbox{}\, + 
E\big(
          W^{(j_{l_1}, \epsilon_{l_1})}_{i_{{l_1}} i_{{l_1}+1}} 
\overline{W^{(j_{l_4}, \epsilon_{l_4})}_{i_{{l_4}} i_{{l_4}+1}}} \big)
E\big(
\overline{W^{(j_{l_2}, \epsilon_{l_2})}_{i_{{l_2}} i_{{l_2}+1}} }
          W^{(j_{l_3}, \epsilon_{l_3})}_{i_{{l_3}} i_{{l_3}+1}}\big) 
\]
\[\mbox{}\, + 
E\big(
          W^{(j_{l_1}, \epsilon_{l_1})}_{i_{{l_1}} i_{{l_1}+1}} 
\overline{W^{(j_{l_3}, \epsilon_{l_3})}_{{i_{l_3}} i_{{l_3}+1}}} \big)
E\big(
\overline{W^{(j_{l_2}, \epsilon_{l_2})}_{i_{{l_2}} i_{{l_2}+1}} }
          W^{(j_{l_4}, \epsilon_{l_4})}_{i_{{l_4}} i_{{l_4}+1}}\big). 
\]
Note that because our matrix entries are centred these terms will vanish unless $\1_{\overline\pi_1 \leq \ker(\bm j)}$ and $\1_{\overline\pi_2 \leq \ker(\bm j)}$.
Thus the contribution of the difference is exactly
\[
k_4\big(
          W^{(j_{l_1}, \epsilon_{l_1})}_{i_{{l_1}} i_{{l_1}+1}}, 
\overline{W^{(j_{l_2}, \epsilon_{l_2})}_{{i_{l_2}} i_{{l_2}+1}}},
          W^{(j_{l_3}, \epsilon_{l_3})}_{{i_{l_3}} i_{{l_3}+1}}, 
\overline{W^{(j_{l_4}, \epsilon_{l_4})}_{i_{{l_4}} i_{{l_4}+1}}}\big),
\]
which will vanish unless $j_{l_1} = j_{l_2} = j_{l_3} = j_{l_4}$. We then have to average this with its complex conjugate (as we might have $i_{l_1} < i_{l_1 + 1}$ or the other way around). In either case, the contribution is $\kappa_4'(w_{j_{l_1}}^{(\epsilon_{l_1})}, w_{j_{l_2}}^{(\epsilon_{l_2})}, w_{j_{l_3}}^{(\epsilon_{l_3})}, w_{j_{l_4}}^{(\epsilon_{l_4})})$.  The contribution of all the other edges is exactly as in the first part of the proof.
\end{proof}

\section{The bijection with symmetric non-crossing annular permutations}

Lemma \ref{lemma:non-crossing part} identifies the graphs $\U_{0,k}(n)$,  which depend on the infinitesimal cumulants. In this section we describe the contribution of the non-orientable graphs $\U_{1, k}(n)$ in terms of non-crossing annular pairings. 

\begin{notation}\label{notation:epsilon construction}
Let $\delta(k) = -k$ for $k \in [ \pm n]$. We let $S_{\pm n}$ be the permutations of $[ \pm n] = \{1, \dots, n\} \cup \{-1, \dots, -n\}$. We let $\Z_2 = \{-1, 1\}$, and given $\bm\epsilon = (\epsilon_1, \dots, \epsilon_n) \in \Z_2^n$, we construct a permutation in $S_{\pm n}$ (also denoted $\bm\epsilon$) by setting
\[
\epsilon(k) = 
\begin{cases} k & \text{if } \epsilon_{|k|} =   1, \\
             -k & \text{if } \epsilon_{|k|} = - 1.
\end{cases}
\]
Then $\epsilon^2$ is the identity.

We shall also follow the notation employed in \cite{m,mvb} (and elsewhere) regarding the embedding of $S_n$ into $S_{\pm n}$. Given a permutation $\tau \in S_n$, we consider it to be a permutation of $[\pm n]$ by letting it act trivially on $[-n ] = \{-1, \dots, -n\}$, i.e. $\tau(-k) = -k $ for $k \in [n]$.

Let $\rho \in \PP_2(n)$ be a pairing and $\epsilon \in \Z_2^n$. Using our convention above, let
\[
\sigma = \epsilon \rho \delta \rho \epsilon.
\]
Then $\sigma$ is a pairing, $\delta\sigma = \sigma\delta$, and $\sigma\delta$ is a pairing.
\end{notation}

\begin{lemma}\label{lemma:symmetric annular case}
Suppose $\pi \in \U_{1,k}(n)$ with $k \geq 1$. 
Let $\sigma \in S_{\pm n}$ be the permutation with cycles $(r, s)(-r, -s)$ whenever $(r, s) \in \overline\pi$ and the edges in $G(\pi)$ have opposite orientations; and  $(r, -s)(-r, s)$ whenever $(r, s) \in \overline\pi$ and the edges in $G(\pi)$ have the same  orientations. Then $\sigma \in \NC_2^\delta(n, -n)$. Conversely, every element of $\NC_2^\delta(n, -n)$ arises from a $\pi \in \U_{1, k}(n)$ in this way.
\end{lemma}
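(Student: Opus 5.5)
We argue by induction on $n$, pruning double-leaves as in the proofs of Lemmas~\ref{lemma:double-trees} and \ref{lemma:m-k unicycle}; this reduces the forward direction to the leafless cases. Here $\NC_2^\delta(n,-n)$ denotes the pairings in $S_\NC^\delta(n,-n)$, and the three conditions to check are the ones listed in the definition. The first condition, that $\sigma\delta$ has all cycles of length $2$ and that conjugate pairs close up, is automatic from the construction. A pair $(r,s)$ or $(r,-s)$ never contains both $k$ and $-k$, and the pairs come in the conjugate couples $c, \delta c^{-1}\delta$.

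The real content is the genus condition $\#(\sigma)+\#(\sigma^{-1}\gamma_n\delta\gamma_n^{-1}\delta)=2n$, together with connectivity $\sigma\vee\gamma_n\delta\gamma_n^{-1}\delta=1$. Since $\#(\sigma)=n$, the genus condition becomes $\#(\sigma\gamma_n\delta\gamma_n^{-1}\delta)=n$.

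\textbf{Base cases.} If $\overline G(\pi)$ has no leaves, then either $k=1$ (so $n=2$, $\pi=1_2$, with the loops declared to have the same orientation), or $k=2$ (so $n=4$, $\pi=\{\{1,3\},\{2,4\}\}$ with the crossing pairing $\overline\pi_3$), or $k\ge3$. In the last case Lemma~\ref{lemma:double-cycle} forces $\overline\pi=\{\{i,i+n/2\}\}$ with all pairs of the same orientation, so $\sigma$ has the cycles $(i,-(i+n/2))$ and $(-i,i+n/2)$. In each case we compute $\sigma\gamma_n\delta\gamma_n^{-1}\delta$ directly. Note that $\gamma_n\delta\gamma_n^{-1}\delta$ is $\gamma_n$ on $[n]$ and $\gamma_n^{-1}$ on $[-n]$, i.e.\ the annulus with the outer circle $(1,\dots,n)$ and the inner circle $(-n,\dots,-1)$. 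The picture is that each through-string joins $i$ to $-(i+n/2)$, so the strings are non-crossing on the annulus. We then verify that the product has $n$ cycles. Connectivity holds because every cycle of $\sigma$ is a through-string.

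\textbf{Inductive step.} A leaf of $\overline G(\pi)$ is a double-leaf $\{k\}$ with $\{\gamma^{-1}(k),k\}\in\overline\pi$, and these two edges have opposite orientations. So $\sigma$ acquires the cycles $(\gamma^{-1}(k),k)$ and $(-\gamma^{-1}(k),-k)$. These are adjacent pairs on the outer and inner circles respectively. Removing an adjacent pair from each circle preserves annular non-crossingness and connectivity, and conversely such pairs can always be reinserted. Thus $\sigma\in\NC_2^\delta(n,-n)$ if and only if the pruned $\sigma'$ lies in $\NC_2^\delta(n-2,-(n-2))$, and by Remark~\ref{rmk:pruning} $\sigma'$ is exactly the permutation attached to the pruned $\pi_2\in\U_{1,k}(n-2)$.

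\textbf{Converse.} Given $\sigma\in\NC_2^\delta(n,-n)$, choose from each conjugate couple the representative containing a positive element; projecting by $|\cdot|$ gives a pairing $\rho$ of $[n]$. Pairs of type $(r,s)$ are declared to have opposite orientation and pairs of type $(r,-s)$ the same orientation. If $\sigma$ has a non-through string, then by annular non-crossingness there is an innermost one, which is an adjacent pair $(\gamma^{-1}(k),k)$ on one circle. Its conjugate is the matching adjacent pair on the other circle. We remove it, apply induction, and re-attach a double-leaf, which produces a valid $\pi$ by reversing Lemma~\ref{lemma:pruning}. If all strings are through-strings, then $\sigma\delta$-symmetry together with the non-crossing condition forces the strings to be $i\mapsto-(i+c)$ for a fixed shift $c$, and the symmetry gives $2c\equiv0$. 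Here $c=n/2$ recovers the cycle graph of Lemma~\ref{lemma:double-cycle}, and the small cases recover $n=2,4$. We must also check that the two constructions are mutually inverse. This follows because $\pi$ is determined by $\overline\pi$ and the orientations via the Eulerian circuit: each vertex is the union of the blocks $\{\gamma(u)\}$ over the incoming/outgoing data.

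\textbf{Main obstacle.} The hardest step is the classification of purely-through symmetric annular pairings, in particular excluding shifts $c\neq n/2$ and the $c=0$ case, and reconciling this with the paper's conventions for $k=1,2$. I would handle it by an explicit cycle count of $\sigma\gamma_n\delta\gamma_n^{-1}\delta$ for the shift pairing, which yields $n$ cycles exactly when the strings are compatible with the $\delta$-symmetry.
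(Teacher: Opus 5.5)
Your proposal is correct, but it is organized differently from the paper's proof.

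\textbf{Your route.} You induct on $n$. A double-leaf $\{a+1\}$ of $G(\pi)$ produces an adjacent non-through pair $(a,a+1)$ of $\sigma$ together with its conjugate $(-a,-(a+1))$. Deleting both preserves membership in $\NC_2^\delta$: with $\Gamma=\gamma_n\delta\gamma_n^{-1}\delta=(1,\dots,n)(-n,\dots,-1)$, the points $a$ and $-(a+1)$ are fixed points of $\sigma\Gamma$, so $\#(\sigma)$ and $\#(\sigma\Gamma)$ each drop by exactly $2$. This leaves only the shift pairing $i\mapsto-(i+n/2)$ as base case; you handle the conventions for $k=1,2$ correctly, since both give this shift. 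There $\sigma\Gamma$ is the product of the $n$ transpositions $(i,-(i+1+n/2))$.

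\textbf{The paper's route.} The paper works globally. It orders each block of $\pi$ (with a special rule at the cross-over vertex), twists by a sign vector $\eta$, and checks vertex by vertex that $\eta\pi\delta\pi^{-1}\delta\eta=\gamma\sigma\delta\gamma^{-1}\delta$. Hence the annular complement of $\sigma$ has $2\#(\pi)=n$ cycles. For the converse it reads $\pi$ off directly as the partition $\tau_0$ underlying $\tau=\gamma\sigma\delta\gamma^{-1}\delta$.

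\textbf{What each buys.} Your route avoids that bookkeeping and reduces everything to short cycle counts. The paper's route identifies the complement explicitly with the vertices of $G(\pi)$, which gives injectivity of $\pi\mapsto\sigma$ at no extra cost. Injectivity is not part of the statement, but the proof of Theorem~\ref{thm:main} uses it, and your sentence about the Eulerian circuit does not prove it. Within your framework it follows inductively: an adjacent opposite-type pair $\{a,a+1\}\in\overline\pi$ is a bridge of $\overline G(\pi)$ that the circuit crosses at consecutive steps. This forces $\{a+1\}$ to be a singleton block of $\pi$, so $\pi$ is recovered from the pruned partition.

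\textbf{Two small corrections.} First, what you call the main obstacle is easy. For an all-through pairing $\sigma(i)=-f(i)$, $\sigma\Gamma$ exchanges $[n]$ and $[-n]$, so having $n$ cycles forces all of them to be transpositions, which gives $f(i+1)=f(i)+1$. Every shift passes this cycle count, so the count is not what selects $c=n/2$. The relation $\delta\sigma\delta=\sigma$ gives $2c\equiv 0 \pmod n$, and $c=0$ is excluded simply because $\sigma\delta$ would then have fixed points. Second, for $k=2$ the graph $\overline G(\pi)$ is a tree, so the base case is ``no double-leaf'' rather than ``no leaf.'' A leaf at an end of the edge carrying the size-four block of $\overline\pi$ is not a double-leaf, but for $n\ge 6$ some other leaf is.
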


\begin{proof}
Suppose $\pi \in \U_{1, k}(n)$. From $\overline\pi$, we construct a permutation $\epsilon \in \Z_2^n \subseteq S_{\pm n}$ as follows. For each edge $(u, v) \in \overline\pi$, with $u < v$, we let $\epsilon_u = 1$; and $\epsilon_v = 1$ if $u$ and $v$ have the same orientation, and $\epsilon_v = -1$ if $u$ and $v$ have opposite orientations. Then, the $\sigma$ in the statement of the Lemma is $\epsilon \overline\pi \delta \overline\pi \epsilon$. See Figure \ref{fig:symmetric_annular_example} for an example. As mentioned in Notation \ref{notation:epsilon construction}, $\sigma$ is a pairing, so $\#(\sigma) = n$. To show that $\sigma \in \NC_2^\delta(n, -n)$ we need to show (see \cite[p. 5]{mvb}) that $\delta \sigma$ is a pairing and 
\begin{equation}
\#(\delta \gamma^{-1} \delta \sigma^{-1} \gamma) = n.
\end{equation}

\setbox1=\hbox{\includegraphics[width=13em]{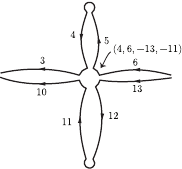}}
\setbox2=\hbox{\includegraphics[width=13em]{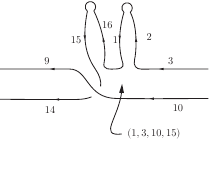}}
\begin{figure}[t]
\begin{center}
$\vcenter{\hsize=\wd1\box1}$
$\vcenter{\hsize=\wd2\box2}$
\end{center}
\caption{\label{fig:cycle vertex}\small Left: a vertex along a cycle. Right: a vertex along a cycle at the cross-over vertex. At this cross-over vertex the cycle is $(17, 1, 3, -10)$. In each case we show the corresponding cycle of $\tau$.}
\end{figure}

To get a conceptual idea of the proof, begin by supposing that $\overline G(\pi)$ is a cycle of length $k = n/2$. Then, as seen in Lemma \ref{lemma:double-cycle},
$$
\overline\pi = \{(1, k+1), (2, k+2), \dots, (k, 2k)\}.
$$
Then, $\sigma = \overline\pi \delta \overline\pi$ and $\gamma^{-1} \delta \sigma^{-1}\gamma = \delta\sigma$. Thus, $\#(\sigma) = n$ and
$$
\#(\delta\gamma^{-1}\delta \sigma^{-1} \gamma) = \#(\delta^2 \sigma) = \#(\sigma) = n.
$$
Hence,
\[
\#(\sigma) + \#(\delta \gamma^{-1} \delta \sigma \gamma)  = 2n,
\]
which implies that $\sigma$ is a pairing in $S_{NC}^\delta(n, -n)$, i.e. $\sigma \in \NC_2^\delta(n, -n)$, \cite[p. 5]{m}.

Next, we consider the general case: $G(\pi)$ has one cycle of length $k$ ($k \geq 1$), where in the cycle of length $k$ the edges are paired with edges with the same orientation. These form the through strings (or spokes) of $\sigma$, as in the special case above. The other parts of the graph are trees attached to the cycle; these form non-crossing pairings in the gaps between the through strings. See Figure \ref{fig:symmetric_annular_example}. 

Let us label the edges of the cycle of length $k$ as $(u_1, v_1)$, \dots $(u_k, v_k)$. We order the edges $u_1 < \cdots < u_k$ and $u_j < v_l$ for $1 \leq l \leq k$. Since $u_l$ and $v_l$ have the same orientation, we must also have $v_1 < \cdots < v_k$. In addition, these intervals cannot overlap, so we may assume that 
\[
u_1 < \cdots < u_k < v_1 < \cdots < v_k.
\]
We call $\{u_1, \dots, u_k\}$ the \textit{outside part} of the cycle and $\{v_1, \dots, v_k\}$ the \textit{inside part} of the cycle. See Figure \ref{fig:moebius strip}.

\begin{figure}[t]
\begin{center}\includegraphics[width=15em]{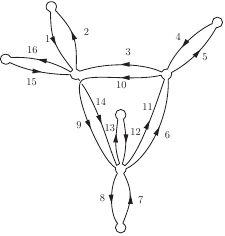}\end{center}
\caption{\label{fig:moebius strip} For $\pi \in \U_{1, k}(n)$, we traverse the edge of a M\"obius strip. Note the twist in the upper left.}
\end{figure}
Let 
\[
\tau = \eta \pi \delta \pi^{-1} \delta \eta,
\]
where we make the partition $\pi$ into a \textit{permutation} by specifying an order for the elements in each of its blocks (described below). In addition $\eta$ will be an element of $\Z_2^n$ considered as a permutation in $S_{\pm n}$ as described in Notation \ref{notation:epsilon construction}. Each cycle of $\pi$ will appear twice, once in each direction. So conjugation by $\eta$ will put minus signs in front of some elements. See Figure \ref{fig:cycle vertex}. 

There are two rules for putting the elements of each block of $\pi$ in the correct order. The simplest case is when a block of $\pi$ corresponds to a vertex of $G(\pi)$ not on the cycle; then we put the elements of this block in increasing order. We set $\eta_k = 1$ for a $k$ in one of these blocks.  The second case is when a vertex $V$ is on the cycle; then we put the elements of the block in order so the cycle of $\pi$ is $(i_1, \dots, i_l, j_m , \dots,  j_1)$ where $i_1 < \cdots < i_l < j_1 < \cdots < j_m$, in cyclic order.  In addition, for these elements we set $\eta_{i_s} = 1$ and $\eta_{j_s} = -1$. The corresponding cycles of $\tau$ will be
\[
(i_1, \dots, i_l, -j_m , \dots,  -j_1)
(j_1, \dots, j_m, -i_l , \dots,  -i_1)
\]
The sets $\{i_1, \dots, i_l\}$ and $\{j_1, \dots, j_m\}$ are specified as follows.

Let $t$ be such that the edges $u_t$ and $v_t$ are the incoming edges to the vertex $V$ (which we are assuming to be on the cycle). Then let $u_{t-1} < i_1 < \cdots < i_l = u_t$ be the labels of the incoming edges between $u_{t-1}$ and $u_t$. Let $v_t = j_1 < \cdots < j_m < v_{t-1}$ be the labels of the incoming edges between $v_t$ and $v_{t-1}$. In Figure \ref{fig:cycle vertex} (left) above we have $u_t = 6$ and $v_t = 13$. Then $i_1 = 4$ and $i_2 = 6$; also $j_1 = 10$ and $j_2 = 13$. Then $i_1 < i_2 < j_1 < j_2$ and the cycle of $\tau$ is $(i_1, i_2, -j_2, -j_1) = (4, 6, -13, -10)$.

\begin{figure}[t]
\begin{center}\includegraphics[width=15em]{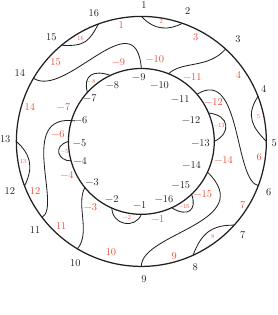}\end{center}
\caption{\label{fig:the corresponding sigma and tau}
The $\sigma$ corresponding to Figure \ref{fig:moebius strip} is displayed here, and the corresponding $\tau$ is in red.}
\end{figure}

At one special vertex $V$, which we call the \textit{cross-over vertex},
something slightly different happens. Again let $u_t$ and $v_t$ be the incoming edges ($3$ and $10$ in Figure \ref{fig:cycle vertex} (right)). If we follow the edge labelled $u_t$ backwards using $\gamma^{-1}$, we come to $v_{t-1}$, i.e. we have \textit{crossed} from the outside to the inside. In this case we let $v_{t-1} < i_1 < \cdots < i_l = u_t$ be the labels of the incoming edges between $v_{t-1}$ and $u_t$, and $v_t = j_1 < \cdots < j_m < u_{t-1}$ be the labels of the incoming edges between $u_{t-1}$ and $v_{t}$. This gives the construction of $\tau$.

By construction, the cycles of $\tau$ are the blocks of $\pi$, after the removal of the minus signs. Let us show that 
\[
\tau = \gamma \sigma^{-1} \delta\gamma^{-1}\delta.
\]
Since $\#(\eta\pi \delta \pi^{-1}\delta \eta) = \#(\pi \delta \pi^{-1}\delta) = 2 \#(\pi) = n$, this will then show that $\#(\delta\gamma^{-1}\gamma \sigma\gamma) = n$, and thus that $\sigma \in \NC_2^\delta(n, -n)$. 

Let us start with a vertex $V$ on the cycle, which may or may  not be the cross-over vertex. Let  $(i_1, \dots, i_l, -j_m,\ab \dots, -j_1)$ be the corresponding cycle of $\tau$.  Let $1 \leq s \leq l$. Then, $\delta\gamma^{-1}\delta(i_s) = i_s$, $\sigma(i_s)$ is the first outgoing edge moving clockwise from $i_s$ and $\gamma(\sigma(i_s))$ is the first incoming edge following $i_s$, i.e. for $s < l$, $\gamma\sigma\delta\gamma^{-1}\delta(i_s) = i_{s+1} = \tau(i_s)$. When $s = l$, $\sigma(i_l) = -j_m$ and $\gamma(\sigma(i_l)) = -j_m$. Thus  $\gamma\sigma\delta\gamma^{-1}\delta(i_l) = -j_m$. On the other hand, $\tau(i_l) =  - j_m$. Thus, on the set $\{i_1, \dots, i_l\}$, we have that $\gamma\sigma\delta\gamma^{-1}\delta$ and $\tau$ agree.

Continuing with the same vertex $V$, let us consider the action on $\{-j_m,\ab \dots,\ab -j_1\}$. We have $\delta\gamma^{-1}\delta(-j_s)$ is the label of the outgoing edge immediately following the edge labelled $j_s$ and $\sigma\delta\gamma^{-1}\delta(-j_s)$ is the label of the incoming edge immediately following the edge labelled $j_s$. This edge is labelled $j_{s-1}$ (for $s < m$ and $i_1$ when $s = m$) . Thus, when $s < m$,  $\sigma\delta\gamma^{-1}\delta(-j_s) = -j_{s-1}$ and $\gamma(-j_{s-1}) = -j_{s-1}$.  Thus $\gamma\sigma\delta\gamma^{-1}\delta(-j_s) = \tau(-j_s)$ in both cases. Thus $\tau$ and $\gamma\sigma\delta\gamma^{-1}\delta$ agree on $\{-j_m, \dots, -j_1\}$. 

All the other cycles of $\tau$ are reversals of the ones verified above. This proves that $\tau = \gamma\sigma\delta\gamma^{-1}\delta$ as claimed. 

Now let us suppose $\sigma \in \NC_2^\delta(n, -n)$. We create $\epsilon \in \Z_2^n$ as follows. For $(u, v) \in \sigma$, we let $\epsilon_u = \epsilon_v$ if $u$ and $v$ have the same sign; and $\epsilon_u = - \epsilon_v$ if $u$ and $v$ have the opposite sign. This does not determine $\epsilon$ uniquely, but by the $\delta$-symmetry of $\sigma$, the product $\epsilon\sigma\epsilon$ is independent of all choices and indeed must be of the form $\sigma_0\delta\sigma_0\delta$ for a pairing $\sigma_0 \in \PP_2(n)$. Indeed, for each $(u, v)(-u, -v) \in \sigma$, we  have $(|u|, |v|) \in \sigma_0$. We let $\tau = \gamma \sigma \delta \gamma^{-1} \delta$. From $\tau$, we extract a partition $\tau_0 \in \PP(n)$ by choosing one representative of each conjugate pair and then removing any minus signs. Let $2k$ be the number of through strings of $\sigma$. We claim that $\overline\tau_0 = \sigma_0$ and $\tau_0 \in \U_{1, k}(n)$. 

To show that $\overline\tau_0 \in \U_{1, k}$ we must show that:
\begin{itemize}

\item
$\#(\tau_0) = \#(\overline\tau_0) = n/2$, and

\item
$\overline{G}(\tau_0)$ has a cycle of length $k$, on which the edges have the same orientation, and the edges outside the cycle have the opposite orientation.

\end{itemize}

By construction, $\#(\tau_0) = \#(\tau)/2 = n/2$. In addition, $\#(\sigma) + \#(\delta \gamma^{-1} \delta \sigma \gamma)\ab = 2n$ because $\sigma \in \NC_2^\delta(n, -n)$; and $\#(\sigma) = n$ because $\sigma$ is a pairing. Thus $\#(\tau) = n$. So we have to show that $\#(\overline\tau_0) = n/2$. If we can show that $\overline\tau_0 = \sigma_0$ then this will be done. 

First, we show that $(\bm a)$: $\sigma_0 \leq \overline\tau_0$ (as partitions). This will show that $\#(\overline\tau_0) \leq \#(\sigma_0) = n/2$. In addition, we have Euler's equation ($\#(\tau_0) \leq 1 + \#(\overline\tau_0)$), with equality only if $\overline{G}(\tau_0)$ is a tree. Thus $n/2 -1 \leq \#(\overline\tau_0) \leq n/2$ and we can only have $n/2 -1 = \#(\overline\tau_0)$ when $\overline{G}(\tau_0)$ is a tree. Thus we also need to prove $(\bm b)$: $\overline{G}(\tau_0)$ has a cycle of length $k$, the edges on the cycle have the same orientation and the edges outside the cycle have the opposite orientation.

First, we prove $(\bm a)$. Suppose $(u, v) \in \sigma_0$. Let us suppose first that $\epsilon_u = \epsilon_v$. Then $(u, v) \in \sigma$. We must show that $u \sim_{\tau_0} \gamma(v)$, i.e. the edge labelled $v$ starts at the block of $\tau_0$ containing $v$ and ends at the block of $\tau_0$ containing $u$. But if $\epsilon_u = \epsilon_v$ , then $\sigma(u) = v$; $\sigma \delta\gamma^{-1}\delta(u) = \sigma(u) = v$ and $\tau(u) = \gamma \sigma \delta \gamma^{-1} \delta(u) = \gamma(v)$ as claimed. Next suppose that $\epsilon_u = - \epsilon_v$. Then $(u, -v) \in \sigma$ and $(-u, v) \in \sigma$.  Then $\tau(u) = \gamma\sigma(u) = \gamma(-v) = -v$. Thus $u$ and $v$ end at the same vertex of $G(\tau_0)$.  Also $\tau(-\gamma(v)) = \gamma \sigma \delta \gamma^{-1} \delta( \delta \gamma(v)) = \gamma(\sigma(-v)) = \gamma(u)$. Thus $\gamma(u) \sim_{\tau_0} \gamma(v)$. Hence, the edges labelled $u$ and $v$ start and end at the same vertices of $G(\tau_0)$ and have the same orientation. This proves $(\bm a)$. In addition, this also shows that the $k$ through strings of $\sigma$ produce a cycle in $\overline G(\tau_0)$, as each through string connects the cycles of $\tau$ on either side. Hence, $\overline G(\tau_0)$ has a cycle of length $k$ where the edges have the same orientation. In a similar way, we prove that the edges outside the cycle have opposite orientation. This proves $(\bm b)$. Hence $\tau_0 \in \U_{1, k}(n)$ and $\overline \tau_0 = \sigma_0$. 

This completes the proof of the bijection.
\end{proof}

\begin{remark}
    Let $\sigma \in S_\NC^\delta(n, -n) \setminus \NC_2^{\delta}(-n, n)$. It is immediate, from the fact that $\set{w_j, w_j^t}_{j \in J}$ is a semi-circular family, that $\kappa_{\sigma/2} = 0$.
\end{remark}

\begingroup
\renewcommand{\thetheorem}{\ref{thm:main}}
\addtocounter{theorem}{-1}
\begin{theorem}
Let $\{ W_j\}_{j \in J}$ be independent $N \times N$ Wigner matrices each distributed as in Definition \ref{def:wigner matrix}. Then the set $\{ W_j\}_{j \in J}$ is asymptotically semi-circular and real infinitesimally free. When $r^2 = 0$, the set $\{ W_j, W_j^t \}_{j \in J}$ is asymptotically  free.  

For an individual operator $w_j$, we have that the real infinitesimal free cumulants of $w_j$ are given by
\begin{itemize}

\item
$\kappa_2( w_{j_u}^{(\epsilon_u)}, w_{j_v}^{(\epsilon_v)} ) = k_2(W_{12}^{(j_u, \epsilon_u)}, W_{12}^{(j_v, -\epsilon_v)})$,

\item
\begin{align*}\lefteqn{
\kappa_2'( w_{j_u}^{(\epsilon_u)}, w_{j_v}^{(\epsilon_v)} ) } \\ 
& =
\esp\Big(W_{11}^{(j_u)} W_{11}^{(j_v)}\,\Big)
- 
\esp\Big(W_{12}^{(j_u, \epsilon_u)} \overline{W_{12}^{(j_v, \epsilon_v)}}\,\Big)
- 
\esp\Big(W_{12}^{(j_u, \epsilon_u)} \overline{W_{12}^{(j_v, -\epsilon_v)}}\,\Big),
\end{align*}
and

\item
\[ \kern-0.25em
\kappa_4'(w_{j_t}^{(\epsilon_t)},  w_{j_u}^{(\epsilon_u)}, w_{j_v}^{(\epsilon_v)}, w_{j_y}^{(\epsilon_y)}) 
=
\re\Big[k_4(W^{(j_t, \epsilon_t)}_{12}, \overline{W^{(j_u, \epsilon_u)}_{12}}, W^{(j_v, \epsilon_v)}_{12}, \overline{W^{(j_y, \epsilon_y)}_{12}})\Big],
\]

\item
All other infinitesimal cumulants vanish and

\item
$\kappa_2(w, w) = \esp(|W_{12}|^2)$

\item
$\kappa_2(w, w^t) = \esp(W_{12}^2)$

\item
$\kappa_2'(w, w) = \kappa_2'(w, w^t) = \esp(W_{11}^2 - |W_{12}|^2 - W_{12}^2)$

\item
$\kappa_4'(w, w, w, w) = \esp(|W_{12}|^4) - 2 \esp(|W_{12}|^2)^2 -   \esp(W_{12}^2)^2 $

\end{itemize}
\end{theorem}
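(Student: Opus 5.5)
The plan is to assemble the pieces already established. By \eqref{eq:expansion_mixed_moment}, the limiting moment is $A_n$ and the infinitesimal moment is $B_n - \binom{n/2+1}{2}A_n$. For the first order, \eqref{eq:a expression} and Theorem~\ref{thm:first order freeness} give
\[
A_n=\sum_{\pi\in\NC(n)}\kappa_\pi\bigl(w_{j_1}^{(\epsilon_1)},\dots,w_{j_n}^{(\epsilon_n)}\bigr),
\]
with only $\kappa_2$ non-zero. The value of $\kappa_2$ is $\sigma^2$ when $j_u=j_v$ and $\epsilon_u=\epsilon_v$, $r^2$ when $j_u=j_v$ and $\epsilon_u=-\epsilon_v$, and $0$ otherwise. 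I will first check this against the stated formula $k_2(W_{12}^{(j_u,\epsilon_u)},W_{12}^{(j_v,-\epsilon_v)})$. Since $W^t_{12}=W_{21}=\overline{W_{12}}$, this equals $\esp(W^{(\epsilon_u)}_{12}\overline{W^{(\epsilon_v)}_{12}})$, which matches. The family is therefore semicircular, and since mixed cumulants in $j$ vanish, the algebras $\cA_j$ are free. When $r^2=0$, $\kappa_2(w_j,w_j^t)=0$ as well, so all cumulants mixing members of $\{w_j,w_j^t\}_j$ vanish, which gives the asymptotic freeness of $\{W_j,W_j^t\}_{j\in J}$.

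For the infinitesimal level I will start from \eqref{eq:real 1/N expansion}. Its last two sums equal $\sum_{\pi\in\NC(n)}\partial\kappa_\pi$ by Lemma~\ref{lemma:non-crossing part}, with the stated $\kappa_2'$ and $\kappa_4'$. For the first sum I will use Lemma~\ref{lemma:symmetric annular case}, which gives a bijection $\pi\mapsto\sigma$ from $\bigcup_{k\ge1}\U_{1,k}(n)$ onto $\NC_2^\delta(n,-n)$. I must check that the weight matches: $g(\pi,\bm\epsilon)\1_{\overline\pi\le\ker(\bm j)}=\kappa_{\sigma/2}(w_{j_1}^{(\epsilon_1)},\dots,w_{j_n}^{(\epsilon_n)})$. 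Each conjugate pair of $\sigma$ comes from a pair $(r,s)\in\overline\pi$, and I take the representative $(r,s)$ or $(r,-s)$.
\begin{itemize}
\item If the edges have opposite orientation, the factor is $\kappa_2(w^{(\epsilon_r)},w^{(\epsilon_s)})$. This is $\sigma^2$ if $\epsilon_r=\epsilon_s$ and $r^2$ otherwise, matching the definition of $g$.
\item If they have the same orientation, the representative $(r,-s)$ produces $\kappa_2(w^{(\epsilon_r)},w^{(-\epsilon_s)})$, which swaps $\sigma^2$ and $r^2$, again matching $g$.
\end{itemize}
The indicator $\1_{\overline\pi\le\ker(\bm j)}$ agrees with the vanishing of mixed-$j$ $\kappa_2$. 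I also have to check the degenerate cases $k=1,2$, where $\U_{1,k}$ was defined by convention; these should correspond to $\sigma$ with $2$ and $4$ through strings. Cumulants $\kappa_{\sigma/2}$ for non-pairings vanish by the remark preceding the theorem.

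Combining these gives exactly \eqref{eq:real moment cumulant} with the stated cumulants. Since every cumulant that mixes different $j$ vanishes, \cite[Thm.~7.1]{cm} yields real infinitesimal freeness. When $r^2=0$, the formulas for $\kappa_2'$ and $\kappa_4'$ do not vanish on mixed $\epsilon$ arguments. The claim there is therefore only first-order freeness of $\{W_j,W_j^t\}$, and I will state it that way.

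The single-operator formulas follow by specialization:
\begin{itemize}
\item $\kappa_2'(w,w)=s^2-\sigma^2-r^2=\kappa_2'(w,w^t)$.
\item $\kappa_4'=\re\,k_4(W,\overline W,W,\overline W)=\alpha-2\sigma^4-r^4$.
\end{itemize}
The main obstacle I expect is the weight matching in the bijection step, specifically the orientation conventions of $g$ and how the degenerate $k=1,2$ cases line up with annular pairings. If that check fails, I would recheck it directly on $n=2$ and $n=4$.
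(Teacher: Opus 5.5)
Your proposal is correct and follows the paper's own proof. Both read off $A_n$ from \eqref{eq:a expression}, split $B_n-\binom{n/2+1}{2}A_n$ via \eqref{eq:real 1/N expansion}, identify the $\U_{1,k}$ sum with $\sum_{\sigma}\kappa_{\sigma/2}$ through Lemma~\ref{lemma:symmetric annular case} and the remaining sums with $\sum_{\pi}\partial\kappa_\pi$ through Lemma~\ref{lemma:non-crossing part}, and then conclude from the vanishing of mixed cumulants and \cite[Thm.~7.1]{cm}. Your explicit check that $g(\pi,\bm\epsilon)\1_{\overline\pi\le\ker(\bm j)}=\kappa_{\sigma/2}$ (with $\overline\pi_3$ playing the role of $\overline\pi$ when $k=2$) is a step the paper leaves implicit, and it does go through.
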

\endgroup

\begin{proof}
Recall that the $1/N$ term, $B_n - \binom{n/2 + 1}{2} A_n$, was written in two parts in Equation \ref{eq:real 1/N expansion}. In Lemma \ref{lemma:symmetric annular case} we showed that the first part was given by
\[
\sum_{\sigma \in S_\NC^\delta(n, -n)} \kappa_{\sigma/2}(w_{j_1}^{(\epsilon_1)}, \dots, w_{j_n}^{(\epsilon_n)}).
\]
In Lemma  \ref{lemma:non-crossing part} we showed that the second part was given by
\[
\sum_{\pi \in \NC(n)} \partial\kappa_\pi(w_{j_1}^{(\epsilon_1)}, \dots, w_{j_n}^{(\epsilon_n)}). 
\]
Combined with the moment-cumulant formula, Equation (\ref{eq:real moment cumulant}), this shows that the  joint cumulants of $\{ w_j, w^t_j \}_{j \in J}$ are given by those in the statement of Lemma \ref{lemma:non-crossing part}. In particular:
\begin{itemize}

\item
$\{ w_j, w^t_j \}_{j \in J}$ is a semi-circular family of centred elements; i.e. all cumulants $\kappa_n(w^{(\epsilon_1)}_{j_1}, \dots, w^{(\epsilon_n)}_{j_n}) = 0$ for $n \geq 3$ whatever the $\bm j$ or the $\bm\epsilon$.

\item
$\kappa_2( w_{j_1}^{(\epsilon_1)}, w_{j_2}^{(\epsilon_2)}) = 0$ whenever $j_1 \not = j_2$. This shows that $\{w_{j_1}, w_{j_1}^t\}$, \dots, $\{w_{j_n}, w_{j_n}^t\}$ are free whenever $j_1, \dots, j_n$ are distinct.

\item
All real mixed infinitesimal cumulants $\kappa_n'( w_{j_1}^{(\epsilon_1)}, \dots, w_{j_n}^{(\epsilon_n)}) = 0$ whenever $j_1$, \dots, $j_n$ are not all equal. This holds for all $n \geq 1$ and all $\bm \epsilon$. This now gives us asymptotic real infinitesimal freeness.

\item 
The values of the infinitesimal cumulants were proved in Lemma \ref{lemma:non-crossing part}.
\end{itemize}
\end{proof}

\section{Concluding Remarks}

\begin{remark}
    The results presented here can be readily extended, under mild conditions, to the case of complex Wigner matrices whose entries need not be identically distributed, provided that they share the same first four moments (as required in Definition~\ref{def:wigner matrix}). In a general setting, one typically requires the entries to have uniformly bounded moments of all orders to establish these results.

Indeed, Equation (\ref{eq:def_E_pi_j_eps}) shows that $E(\pi,\bm j, \bm \epsilon)$ only depends on the distribution of the entries of the $W_j$'s and has no $N$ dependence, and thus the determination of which terms survive in the large $N$ limit only depends on $\pi$. We saw in Lemmas \ref{lemma:double-trees}, \ref{lemma:2-4 trees}, \ref{lemma:m-k unicycle}, and \ref{lemma:non-crossing part} that the terms that survive in the large $N$ limit only depend on the first four joint moments of the entries of the $W_j$'s, or equivalently the first four joint cumulants of the entries of the $W_j$'s. This then implies that the limit joint distribution and the limit joint infinitesimal distribution only depend on the first four cumulants. This means that we can relax the assumptions of Theorem \ref{thm:main}. Indeed, we assumed that the entries of the $W_j$'s are identically distributed; but in fact we only need this up to the first four moments, and to determine the limit eigenvalue distribution we only need to consider the first two moments. The same applies for the asymptotic freeness conclusion; this only needs the vanishing of $k_2(W_{i_1, i_2}^{(j_1, \epsilon_1)}, W_{i_3, i_4}^{(j_2, \epsilon_2)})$ for $j_1 \not = j_2$. When it comes to asymptotic infinitesimal freeness only the cumulants in Lemma \ref{lemma:non-crossing part}, which are only up to order $4$,  are needed to show infinitesimal freeness. 
\end{remark}

\begin{remark}
    In \cite{au}, Au shows asymptotic infinitesimal freeness of a Wigner matrix and fixed finite rank matrices. It seems very likely that the methods presented here can be extended to show asymptotic infinitesimal freeness of Wigner matrices and constant matrices. This has already been shown in \cite{cm} for orthogonally invariant and constant matrices.
\end{remark}

\begin{remark}
    It seems likely that the approach presented here should be adaptable to compute the infinitesimal distribution of other matrix models, such as $\beta$-ensembles and Wishart ensembles.
\end{remark}

\begin{acknowledgements*}
Research was supported by a Discovery Grant from the Natural Sciences and Engineering Research Council of Canada. Samuel Gurrola-Viramontes was supported by the Secretaría de Ciencia, Humanidades, Tecnología e Innovación (SECIHTI) through a graduate fellowship (Grant No. 1305033) and Apoyos Complementarios de Movilidad en el Extranjero, Movilidad Nacional, Movilidad en los Sectores de Interés y Movilidad para Programas de Doble Titulación 2026.
\end{acknowledgements*}

\noindent
{\scshape\fontsize{11}{13}\selectfont S.~Gurrola-Viramontes}, Centro de Investigación en Matemáticas, Guanajuato, Gto. 36000, Mexico, \hfill
\texttt{samuel.gurrola@cimat.mx}.

\medskip\noindent
{\scshape\fontsize{11}{13}\selectfont J.~A.~Mingo}, Department of Mathematics and Statistics, Queen's University, Kingston, ON, K7L, 3N6, Canada, \hfill
\texttt{james.mingo@queensu.ca}.

\thebottomline\end{document}